\newcommand{\dd}{\,\mathrm{d}}
\renewcommand{\Tilde}{\widetilde}
\renewcommand{\Hat}{\widehat}
\renewcommand{\Bar}{\overline}
\newcommand{\R}{\mathbb{R}}
\newcommand{\N}{\mathbb{N}}
\newcommand{\NN}{\mathbb{N}}
\newcommand{\eps}{\varepsilon}
\newcommand{\cH}{\mathcal{H}}
\newcommand{\U}{\mathcal{U}}
\newcommand{\V}{\mathcal{V}}
\newcommand{\spec}{\mathop{\mathrm{spec}}}
\newcommand{\ess}{\mathrm{ess}}
\newtheorem{theorem}{Theorem}[section]
\newtheorem{prop}[theorem]{Proposition}
\newtheorem{lemma}[theorem]{Lemma}
\theoremstyle{definition}
\newtheorem{rem}[theorem]{Remark}
\begin{document}

\title{\bf Asymptotics of Robin eigenvalues\\ for non-isotropic peaks}

\author{Marco Vogel\\[\smallskipamount]
	\small Carl von Ossietzky Universit\"at Oldenburg,\\
	\small Institut f\"ur Mathematik,\\
	\small 26111 Oldenburg, Germany\\[\smallskipamount]
	\small ORCID: 0009-0004-3628-0347\\
	\small E-Mail: \url{marco.vogel@uol.de}
}

\date{}

\maketitle

\begin{abstract}
\noindent Let $\Omega\subset \R^3$ be an open set such that
\begin{align*}
&\Omega \cap (-\delta,\delta)^3=\left\{(x_1,x_2,x_3)\in \R^2\times(0,\delta): \, \left(\frac{x_1}{x_3^p},\frac{x_2}{x_3^q}\right)\in(-1,1)^2\right\}\subset\R^{3}, 
\\
&\Omega \setminus [-\delta,\delta]^3 \text{ is a bounded Lipschitz domain},
\end{align*}
for some $\delta>0$ and $1<p<q<2$. If a set satisfies the first condition one says that it has a non-isotropic peak at $0$. Now consider the operator $Q_\Omega^\alpha$ acting as the Laplacian $u\mapsto-\Delta u$ on $\Omega$ with the Robin boundary condition $\partial_\nu u=\alpha u$ on $\partial\Omega$, where $\partial_\nu$ is the outward normal derivative. We are interested in the strong coupling asymptotics of $Q_\Omega^\alpha$. We prove that for large $\alpha$ the $j$th eigenvalue $E_j(Q_\Omega^\alpha)$ behaves as $E_j(Q_\Omega^\alpha)\approx \mathcal{A}_j\alpha^{\frac{2}{2-q}}$, where the constants $\mathcal{A}_j<0$ are eigenvalues of a one dimensional Schrödinger operator which depends on $p$ and $q$.
\end{abstract}

\noindent {\bf Keywords:} Laplacian, Robin boundary condition, asymptotics of eigenvalues, spectral problems in non Lipschitz domains

\smallskip

\noindent{\bf MSC 2020:} 35P15, 47A75, 35J05  
\section{Introduction}\label{intro}
Consider an open set $\Omega\subset\R^N$ and the Robin eigenvalue problem
\begin{align*}
-\Delta u&=\lambda u \quad \text{on }\Omega,
\\
\partial_\nu u&=\alpha u \quad \text{on }\partial\Omega,
\end{align*}
where  $\partial_\nu$ is the outward normal derivative and $\alpha>0$ is the so called Robin parameter, which is also referred to as a coupling constant. Numerous results concerning this problem have been published over the last decades. In \cite{bfk} the authors gave an overview of the current body of knowledge and also presented some open problems. However we are particularly interested in the following question: How do the eigenvalues behave as $\alpha$ tends to infinity? This is often referred to as the strong coupling asymptotics of the eigenvalues and was presumably first studied by Lacey, Ockendon and Sabina \cite{lacey}. For further discussion we need to define operators more rigorous. So let $\Omega\subset\R^N$ be an open set and $\alpha>0$ such that the quadratic form
\begin{equation*}
q_\Omega^\alpha(u,u)=\int_\Omega|\nabla u|^2\dd x-\alpha\int_{\partial\Omega}u^2\dd \sigma, \quad D(q_{\Omega}^{\alpha})=H^1(\Omega),
\end{equation*}
is semibounded from below and closed, where $\dd\sigma$ denotes the integration with respect to the $(N-1)$-dimensional Hausdorff measure, and denote by $Q_{\Omega}^{\alpha}$ the self-adjoint operator in $L^2(\Omega)$ associated with $q_\Omega^\alpha$. The asymptotic behavior of the eigenvalues is highly influenced by the regularity of $\Omega$. We list some results about the strong coupling regime for "nice" domains first and then move on to "bad" domains.

If $\Omega\subset\R^N$, with $N\ge2$, is a bounded Lipschitz domain, then it is well known that $0>E_j(Q_\Omega^\alpha)>-K\alpha^2$ for sufficiently large $\alpha>0$. Remark that the second inequality follows immediately from \cite[Theorem 1.5.1.10]{grisvard}. Levitin and Parnovski \cite[Theorem 3.2]{levitin} showed that the principal eigenvalue for bounded piecewise smooth domains satisfying the uniform interior cone condition behaves as $E_1(Q_\Omega^\alpha)\approx -C_\Omega\alpha^2$, with $C_\Omega\ge1$. If $\partial\Omega$ is $C^1$ then Daners and Kennedy \cite[Theorem 1.1.]{kennedy} were able to show that $C_\Omega=1$ for every eigenvalue, i.e. $E_j(Q_\Omega^\alpha)\approx-\alpha^2$. The results mentioned above are all one-term asymptotics, but there are also papers which have proven two-term asymptotics. Exner, Minakov and Parnovski \cite[Theorem 1.3]{exner} showed for planar domains, which have a closed $C^4$ Jordan curve as their boundary, that the eigenvalues behave as $E_j(Q_\Omega^\alpha)\approx-\alpha^2-\gamma^*\alpha$, where $\gamma^*$ is the maximal curvature of the mentioned Jordan curve. Another two-term asymptotic expansion was obtained for curvilinear polygons by Khalile, Ourmières-Bonafos and Pankrashkin, the corresponding paper is quite voluminous and technical, therefore we refer to \cite{konstantin} for precise statements.

For non Lipschitz domains many different scenarios are possible. If $\Omega$ has an outward pointing peak which is "to sharp" the Robin-Laplacian fails to be semibounded from below, see e.g. \cite[Lemma 1.2]{nazarov}. However the present paper is motivated by \cite{kov}, where Kova\v{r}\'{\i}k and Pankrashkin looked at isotropic peaks, i.e. there exists $\delta>0$ such that
\begin{align*}
&\Omega \cap (-\delta,\delta)^N=\left\{(x',x_N)\in \R^{N-1}\times(0,\delta): \, \frac{x'}{x_N^{q}}\in B_1(0)\right\}\subset\R^{N}, 
\\
&\Omega \setminus [-\delta,\delta]^N \text{ is a bounded Lipschitz domain},
\end{align*}
with $1<q<2$ and $B_1(0)$ being the unit ball centered at the origin in $\R^{N-1}$.
They proved that the rate of divergence of the eigenvalues to $-\infty$ is faster than in the pure Lipschitz case. In particular they showed, the eigenvalues behave as $E_j(Q_{\Omega}^\alpha)\approx\mathcal{E}_j\alpha^{\frac{2}{2-q}}$, with $\mathcal{E}_j<0$ being the $j$th eigenvalue of a one dimensional Schrödinger operator. One also observes that the sharper the peak the faster the divergence to $-\infty$. We change the premise of the aforementioned paper in the following way: Consider an open set $\Omega\subset \R^3$, which satisfies
\begin{align}
&\Omega \cap (-\delta,\delta)^3=\left\{(x_1,x_2,x_3)\in \R^2\times(0,\delta): \, \left(\frac{x_1}{x_3^p},\frac{x_2}{x_3^q}\right)\in(-1,1)^2\right\}\subset\R^{3}, \label{non iso peak}
\\
&\Omega \setminus [-\delta,\delta]^3 \text{ is a bounded Lipschitz domain},\label{lipschitz part}
\end{align}
for some $\delta>0$ and $1<p<q<2$. If a set satisfies condition \eqref{non iso peak} one says that it has a non-isotropic peak at $0$. Since $\Omega$ is not Lipschitz, the closedness and semiboundedness of the quadratic form $q_\Omega^\alpha$ are not obvious but will be justified in Appendix \ref{appb}.

Based on the above observation one might expect that the larger power $q$ determines the rate of divergence to $-\infty$, which turns out to be true as described in Theorem \ref{thm1}. For a precise statement we need to define a one dimensional Schrödinger operator. Consider the symmetric differential operator given by
\begin{equation*}
C_c^\infty(0,\infty)\ni f\mapsto -f''+\left(\frac{(p+q)^2-2(p+q)}{4s^2}-\frac{1}{s^q}\right)f
\end{equation*}
and denote by $A_{0,1}$ its Friedrichs extension in $L^2(0,\infty)$. Then the result reads as follows:
\begin{theorem}\label{thm1}
Let $j\in \N$ be fixed, then the $j$th eigenvalue of $Q_\Omega^\alpha$ satisfies
	\begin{equation*}
		  \label{asymp}
	E_j(Q_\Omega^\alpha)=\alpha^{\frac{2}{2-q}}E_j(A_{0,1})+O\left(\alpha^{\frac{2}{2-q}-(p-1)}+\alpha^{\frac{2}{2-q}-\frac{q-p}{2-q}}\right)\quad \text{as }\alpha\rightarrow\infty.
	\end{equation*}
\end{theorem}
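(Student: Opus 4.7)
The strategy adapts the approach of \cite{kov} for the isotropic case: I combine a Dirichlet--Neumann bracketing to localize near the peak, an anisotropic straightening that rectifies the peak to a rectangular cylinder, and a rescaling that produces the reference operator $A_{0,1}$ as the leading term.

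First I would localize near the peak. Pick a slice $\{x_3 = \delta_0\}$ with $\alpha^{-1/(2-q)} \ll \delta_0 = \delta_0(\alpha) \to 0$ and apply Dirichlet--Neumann bracketing across it. This decouples $\Omega$ into the peak piece $\Omega_0 = \Omega \cap \{0 < x_3 < \delta_0\}$ and the remainder, which is Lipschitz; on the latter \cite[Theorem 1.5.1.10]{grisvard} gives $E_j \ge -C\alpha^2$, negligible against $\alpha^{2/(2-q)}$ since $2/(2-q) > 2$. On $\Omega_0$ introduce the rectifying variables $y_1 = x_1/x_3^p$, $y_2 = x_2/x_3^q$, $y_3 = x_3$, which map $\Omega_0$ diffeomorphically onto $\mathcal{C} = (-1,1)^2 \times (0,\delta_0)$, together with the substitution $u(x) = y_3^{-(p+q)/2}\phi(y)$ that preserves the $L^2$ norm. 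A direct computation of $|\nabla u|^2$ followed by integration by parts of the cross-term involving $\partial_{y_3}(\phi^2)/y_3$ produces the quadratic form
\[
q_{\Omega_0}^{\alpha}(u,u) = \int_{\mathcal{C}}\!\left(y_3^{-2p}(\partial_{y_1}\phi)^2 + y_3^{-2q}(\partial_{y_2}\phi)^2 + (\partial_{y_3}\phi)^2 + \frac{(p+q)^2 - 2(p+q)}{4y_3^2}\phi^2\right)\!dy + R,
\]
where $R$ collects cross-derivative and boundary corrections. The boundary integral $-\alpha\int u^2\,d\sigma$ splits across the four lateral faces: on $\{y_2 = \pm 1\}$, using $d\sigma = \sqrt{1+q^2 y_3^{2(q-1)}}\,y_3^p\,dy_1\,dy_3$, the integrand is $-\alpha y_3^{-q}\phi^2$ to leading order, while on $\{y_1 = \pm 1\}$ it is $-\alpha y_3^{-p}\phi^2$; since $p < q$, the $\{y_2 = \pm 1\}$ faces dominate.

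The key observation is that on the natural scale $y_3 \sim \alpha^{-1/(2-q)}$ the transverse kinetic coefficients are huge, of order $\alpha^{2p/(2-q)}$ and $\alpha^{2q/(2-q)}$; minimizers are therefore essentially constant in $(y_1,y_2)$. Projecting onto the constant transverse mode and rescaling via $y_3 = \alpha^{-1/(2-q)} s$ yields $\alpha^{2/(2-q)}$ times the form of $A_{0,1}$. The $\{y_1=\pm 1\}$ boundary contribution scales as $\alpha\cdot y_3^{-p} \sim \alpha^{(2-q+p)/(2-q)} = \alpha^{2/(2-q)-(q-p)/(2-q)}$, which produces the second error term. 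The remaining corrections---from the boundary-measure approximations $\sqrt{1 + p^2 y_3^{2(p-1)}}\approx 1$ and $\sqrt{1 + q^2 y_3^{2(q-1)}}\approx 1$, from the projection error onto orthogonal transverse modes, and from the commutator associated with the cutoff at $y_3 = \delta_0$---are organized so as to contribute within the first error term $\alpha^{2/(2-q)-(p-1)}$. A min-max argument using the $j$ lowest eigenfunctions of $A_{0,1}$ as trial functions (upper bound) together with a bracketing--projection argument (lower bound) then delivers matching bounds.

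The main obstacle is tuning $\delta_0(\alpha)$ and the various cutoffs so that no single error exceeds the stated bound. Compared with the isotropic case of \cite{kov}, the two transverse directions contribute at genuinely different orders, which forces the anisotropic substitution above and a separate treatment of the sub-leading $\{y_1=\pm 1\}$ faces. Additional care is needed at $s \to 0^+$ for the Friedrichs realization of $A_{0,1}$, since the centrifugal coefficient $[(p+q)^2-2(p+q)]/4$ is positive (as $p+q > 2$), ensuring that $A_{0,1}$ is well-defined with discrete lower spectrum against which the min-max comparison can be run.
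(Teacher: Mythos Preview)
Your outline follows the same architecture as the paper's proof---localize near the peak via bracketing, rectify to a cylinder, remove the weight by the substitution $u=y_3^{-(p+q)/2}\phi$, reduce the transverse directions to their ground state, and rescale to the model operator $A_{0,1}$---so the overall strategy is sound. Two implementation choices differ from the paper and each hides a genuine difficulty.

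First, the paper does \emph{not} use a moving truncation $\delta_0(\alpha)\to 0$. Instead it dilates via $Q_\Omega^\alpha\simeq\alpha^2Q_{\alpha\Omega}^1$ and then truncates the dilated peak at a \emph{fixed} height $a$. After the straightening, the cross-section becomes the shrinking rectangle $\omega_\alpha=(-\alpha^{1-p},\alpha^{1-p})\times(-\alpha^{1-q},\alpha^{1-q})$, and it is precisely the bound $|t_1|\le\alpha^{1-p}$ that makes every off-diagonal metric term uniformly $O(\alpha^{1-p})$; this is the sole source of the error $\alpha^{\frac{2}{2-q}-(p-1)}$. In your coordinates $|y_1|\le 1$ and the cross terms are only $O(y_3^{p-1})$, so you must take $\delta_0\sim\alpha^{-1}$ (not merely $\delta_0\gg\alpha^{-1/(2-q)}$) to recover that error, and you must also verify that the Lipschitz remainder and the cutoff commutator stay controlled at this specific rate. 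The paper's dilation packages all of this automatically.

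Second, for the lower bound the paper projects onto the product of the \emph{Robin} ground states $\Phi_{1,\alpha^{1-p},\rho_p(s)}(t_1)\,\Phi_{1,\alpha^{1-q},\rho_q(s)}(t_2)$, not onto constants. Constants are fine as trial functions for the upper bound, but they are not eigenfunctions of the transverse Robin problem, so the orthogonal decomposition $u=v+w$ with $v$ the constant component does not diagonalize the transverse form and leaves uncontrolled cross terms between $v$ and $w$. The paper's choice makes the transverse form block-diagonal with gap $\frac{\pi^2}{4}\alpha^{2p-2}s^{-2p}$ on $w$, which is what ultimately kills the remainder. Your sketch of ``a bracketing--projection argument'' will only close once you switch to the Robin ground states (Lemma~\ref{lem-1}(d) and Lemma~\ref{lem2} supply the needed regularity in $s$).
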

\begin{rem}
Since $\Omega$ is bounded and has a continuous boundary, the embedding $H^1(\Omega)\hookrightarrow L^2(\Omega)$ is compact \cite[Ch. V Theorem 4.17]{edmunds}. Therefore $Q_\Omega^\alpha$ has compact resolvent, in particular the essential spectrum is empty and there exist infinitely many discrete eigenvalues.
\end{rem}

\begin{rem}
It would be desirable to substitute the "cross section" $(-1,1)^2$ with an arbitrary Lipschitz domain, but during our analysis this cross section is going to "collapse to an interval". So one has to analyze the behavior of the Robin eigenvalues on such a collapsing cross section, which may have its own interest and requires an independent study.
\end{rem}
The proof follows the same scheme as in \cite{kov, vogel} but with several additional technical ingredients, see section \ref{sec-iso peak} for further explanations.

\section{Definitions and auxiliary results}  \label{sec-prelim}

\subsection{Min-max principle}\label{ssmm}

Let $T$ be a lower semibounded, self-adjoint operator in an infinite-dimensional Hilbert space $\cH$. The essential spectrum of $T$ will be denoted by $\spec_\ess T$. Furthermore, denote $\Sigma:=\inf\spec_\ess T$ for $\spec_\ess T\ne \emptyset$ and $\Sigma:=+\infty$ otherwise. If $T$ has at least $j$ eigenvalues (counting multiplicities) in $(-\infty,\Sigma)$, then we denote by $E_j(T)$ its $j$th eigenvalue (when enumerated in the non-decreasing order and counted according to the multiplicities).  All operators we consider are real (i.e. map real-valued functions to real-valued functions), and we prefer to work with real Hilbert spaces in order to have shorter expressions. 

Let $t$ be the quadratic form for\, $T$, with domain $D(t)$, and let $D\subset D(t)$ be any dense subset (with respect to the scalar product induced by
$t$).
Consider the following ``variational eigenvalues''
\begin{equation*}
	\Lambda_j(T):=\inf_{\substack{V\subset D\\ \dim V=j}}\sup_{\substack{u\in V \\ u\neq 0}}\frac{t(u,u)}{\langle u,u\rangle_{\cH}}, 
\end{equation*}
which are independent of the choice of $D$. One easily sees that $j\mapsto \Lambda_j(T)$ is non-decreasing, and it is known \cite[Section XIII.1]{RS4} that only two cases are possible:
\begin{itemize}
	\item For all $j\in\mathbb{N}$ there holds $\Lambda_{j}(T)<\Sigma$. Then the spectrum of $T$ in $(-\infty,\Sigma)$ consists of infinitely
	many discrete eigenvalues $E_j(T)\equiv \Lambda_j(T)$ with $j\in\NN$.
	
	\item For some $N\in\NN\cup\{0\}$ there holds $\Lambda_{N+1}(T)\geq\Sigma$, while $\Lambda_j(T)<\Sigma$ for all $j\le N$.
	Then $T$ has exactly $N$ discrete eigenvalues in $(-\infty,\Sigma)$ and $E_j(T)=\Lambda_{j}(T)$ for $j\in\{1,\dots,N\}$, while $\Lambda_{j}(T)=\Sigma$ for all $j\geq N+1$.
\end{itemize}
In all cases there holds $\lim_{j\to\infty}\Lambda_j(T)=\Sigma$, and if for some $j\in\NN$ one has $\Lambda_{j}(T)<\Sigma$, then $E_j(T)=\Lambda_j(T)$.
In particular, if for some $j\in \NN$ one has the strict inequality $\Lambda_j(T)<\Lambda_{j+1}(T)$, then $E_j(T)=\Lambda_j(T)$.

\subsection{Isolating the peak}\label{isolateing peak}
Choose $\delta>0$ such that \eqref{non iso peak} holds. For such $\delta$ we define

\begin{equation}
\begin{aligned}
\label{omegadelta}\Omega_{\delta}&:=\left\{(x_1,x_2,x_3)\in\R^2\times (0,\delta):\left(\frac{x_1}{x_3^p},\frac{x_2}{x_3^q}\right)\in(-1,1)^2\right\},
\\
\Theta_\delta&:=\Omega\setminus\overline{\Omega}_{\delta},
\\
\partial_{0}\Omega_{\delta}&:=\{(x_1,x_2,x_3)\in\partial\Omega_{\delta}:x_3<\delta\},
\\
\widehat{H}_0^1(\Omega_{\delta})&:=\{u\in H^1(\Omega_\delta):u(\cdot,\delta)=0\}
\end{aligned}
\end{equation}
then $\overline{\Omega}=\overline{\Theta_{\delta}\cup\Omega_{\delta}}$ and $\Theta_{\delta}$ is a bounded Lipschitz domain.
\begin{figure}[H]
\begin{center}
\begin{tikzpicture}[x=0.75pt,y=0.75pt,yscale=-0.5,xscale=0.5]

\draw [color={rgb, 255:red, 0; green, 0; blue, 0 }  ,draw opacity=1 ]   (336.5,457) .. controls (328.5,395) and (278.37,287.76) .. (240.1,268.96) ;
\draw    (336.5,457) .. controls (340.5,380) and (347,332) .. (369.5,296.54) ;
\draw   (311.5,205.96) -- (440.9,233.54) -- (369.5,296.54) -- (240.1,268.96) -- cycle ;
\draw    (336.5,457) .. controls (336.5,361) and (324,247.5) .. (311.5,205.96) ;
\draw    (336.5,457) .. controls (345.5,370) and (384,296.5) .. (440.9,233.54) ;
\draw    (240.1,268.96) .. controls (292.5,87) and (440.5,193) .. (440.9,233.54) ;
\draw    (369.5,296.54) .. controls (388.5,250) and (326.5,95) .. (311.5,205.96) ;
\draw    (451.94,233.53) -- (452.5,459.78) ;
\draw    (440.9,233.54) -- (462.99,233.53) ;
\draw    (441.45,459.78) -- (463.55,459.78) ;

\draw (464,336) node [anchor=north west][inner sep=0.75pt]   [align=left] {$\displaystyle \delta $};
\draw (258,336) node [anchor=north west][inner sep=0.75pt]   [align=left] {$\displaystyle \Omega _{\delta }$};
\draw (220,191) node [anchor=north west][inner sep=0.75pt]   [align=left] {$\displaystyle \Theta _{\delta }$};

\end{tikzpicture}
	\caption{Truncation of  $\Omega$ in $\Omega_\delta$ and $\Theta_\delta$.}
\end{center}
\end{figure}
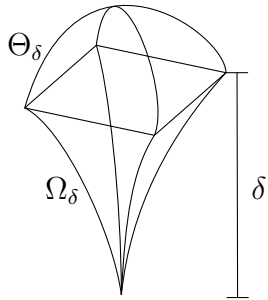
A standard application of the min-max principle shows that for any $j\in\N$ one has
\begin{equation}\label{sandwichRobin}
E_j(R_{\alpha}^{N,\Omega_{\delta}}\oplus K_{\alpha}^{N,\Theta_\delta})\le E_j(Q^{\alpha}_{\Omega})\le E_j(R_{\alpha}^{D,\Omega_{\delta}})
\end{equation}
where $R_{\alpha}^{N/D,\Omega_{\delta}}$ are the self-adjoint operators in $L^2(\Omega_{\delta})$ defined respectively by the quadratic forms
\begin{align*}
r_\alpha^{N,\Omega_{\delta}}(u,u)&=\int_{\Omega_{\delta}}|\nabla u|^2 \dd x-\alpha\int_{\partial_{0}\Omega_{\delta}}u^2\dd \sigma,\quad &&D(r_\alpha^{N,\Omega_{\delta}})=H^1(\Omega_{\delta}),
\\
r_\alpha^{D,\Omega_{\delta}}(u,u)&=r_\alpha^{N,\Omega_{\delta}}(u,u),\quad  &&D(r_\alpha^{D,\Omega_{\delta}})=\widehat{H}_0^1(\Omega_{\delta}),
\end{align*}
and $K_{\alpha}^{N,\Theta_\delta}$ is the self-adjoint operator in $L^2(\Theta_\delta)$ defined by the quadratic form
\begin{equation*}
k_{\alpha}^{N,\Theta_\delta}(u,u)=\int_{\Theta_{\delta}}|\nabla u|^2 \dd x-\alpha\int_{\partial\Omega\cap\partial\Theta_{\delta}}u^2\dd \sigma,\quad D(k_{\alpha}^{N,\Theta_\delta})=H^1(\Theta_{\delta}).
\end{equation*}
Standard dilation arguments show the unitary equivalence
$R_{\alpha}^{N/D,\Omega_{\delta}}\simeq \alpha^{2}R^{N/D,\alpha\Omega_{\delta}}_{1}$ and we remark that

\[
\alpha\Omega_\delta=\left\{(x_1,x_2,x_3)\in\R^2\times(0,\alpha\delta):\left(\alpha^{p-1}\frac{x_1}{x_3^p},\alpha^{q-1}\frac{x_2}{x_3^q}\right)\in(-1,1)^2\right\}.
\]

\subsection{Robin Laplacians on intervals} 
\label{sec-1d}

Given $r\in\R$ and $L>0$ denote by $B_{L,r}$ the self-adjoint operator in $L^2(-L,L)$ generated by the closed quadratic form 
\begin{equation*} \label{d-form} 
	b_{L, r}(f,f) =\int_{-L}^L |f'(t)|^2\, \dd t   -  r (|f(L)|^2+|f(-L)|^2),  \qquad  D(b_{L,r})= H^1(-L,L).
\end{equation*} 
The operator $B_{L,r}$  is the Laplacian $f\mapsto -f''$ on $H^2(-L,L)$
with the Robin boundary condition $f'(\pm L) =\pm r f(\pm L)$ and we will summarize some important spectral properties of $B_{L,r}$ as follows.

\begin{lemma} \label{lem-1}
	
	The following assertions hold true:
	
	\begin{enumerate}
		\item [(a)] $E_j(B_{L,r})=\frac{1}{L^2}E_j(B_{1,rL})$ for all $L>0$ and $r\in\R$.
		\item [(b)] $E_1(B_{L,r})=-\frac{r}{L}+r^2\phi(rL)$ with $\phi\in C^{\infty}(\R_+)\cap L^{\infty}(\R_+)$ for all $L,r>0$.
		\item [(c)] $E_2(B_{L,r})=\frac{\pi^2}{4L^2}-\frac{2r}{L}+r^2\phi(rL)$ with $\phi\in C^{\infty}(\R_+)\cap L^{\infty}(\R_+)$ for all $L,r>0$.
		\item [(d)] Let $j\in\N$ be fixed, then there exists a family of normalized eigenfunctions $(\Phi_{j,L,r})_{r\in\R}$ corresponding to $(E_j(B_{L,r}))_{r\in\R}$ such that $\R\ni r\mapsto\Phi_{j,L,r}\in L^2(-L,L)$ is $C^\infty$, which we call a smooth family of eigenfunctions. Furthermore for all $r_0>0$ and all $C>0$ there exists $C_0>0$ such that 
		\[ \int_{-L}^{L}|\partial_r\Phi_{j,L,r}(t)|^2\dd t\le C_0L^2,\]
		 for all $L\le C$ and all $r\in(0,r_0)$.

	\end{enumerate}	
\end{lemma}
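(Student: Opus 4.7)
My plan is to treat the four parts in order.

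Part (a) is a straightforward rescaling argument: given an eigenfunction $f$ of $B_{L,r}$ with eigenvalue $\lambda$, the function $g(s):=f(Ls)$ on $(-1,1)$ satisfies $-g''=L^2\lambda g$ together with $g'(\pm 1)=\pm Lr\, g(\pm 1)$, hence it is an eigenfunction of $B_{1,rL}$ with eigenvalue $L^2\lambda$. The map $f\mapsto g$ is (after an $L^{1/2}$ rescaling) a unitary bijection between eigenspaces, and the min-max principle then gives the claimed identity for every $j$.

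For parts (b) and (c), I would first use (a) to reduce to $L=1$ and define $\phi$ through the scaled equation. To analyse $s\mapsto E_j(B_{1,s})$ I would combine two ingredients. First, analytic perturbation theory applied to the form $b_{1,s}$, which depends analytically on $s$, shows that each eigenvalue is $C^\infty$ in $s$; simplicity is automatic because $-f''=\lambda f$ together with one separated Robin condition determines $f$ up to a scalar. Second, the Hellmann--Feynman formula $\partial_s E_j=-|\Phi_j(1)|^2-|\Phi_j(-1)|^2$, applied to the Neumann eigenfunctions, gives the derivatives at $s=0$: with $\Phi_1\equiv 1/\sqrt{2}$ we obtain $\partial_s E_1|_{s=0}=-1$, and with $\Phi_2(t)=\sin(\pi t/2)$ we obtain $\partial_s E_2|_{s=0}=-2$. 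Combined with $E_1(B_{1,0})=0$ and $E_2(B_{1,0})=\pi^2/4$, Taylor expansion yields the announced behaviour near $s=0$. To control $\phi$ as $s\to\infty$ I would use the transcendental equations that come from separation of variables: even eigenfunctions $\cosh(kt)$ lead to $k\tanh k=s$ and hence $E_1(B_{1,s})=-k(s)^2$ with $k(s)=s+O(1)$; odd eigenfunctions $\sinh(kt)$ lead to $k\coth k=s$ and hence $E_2(B_{1,s})=-s^2+O(s)$. These asymptotics give $\phi(s)\to-1$ as $s\to\infty$. Together with continuity on every compact subinterval of $(0,\infty)$, this produces $\phi\in C^\infty(\R_+)\cap L^\infty(\R_+)$.

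For part (d), the same Kato analytic perturbation theorem provides a $C^\infty$ family of rank-one spectral projectors $P_{j,L,r}$, and then a $C^\infty$ family of normalized eigenfunctions $r\mapsto\Phi_{j,L,r}$ after an appropriate (and smooth) choice of phase. For the integral estimate I would invoke the scaling from (a) in the form $\Phi_{j,L,r}(t)=L^{-1/2}\Phi_{j,1,Lr}(t/L)$, which gives
\[
\partial_r\Phi_{j,L,r}(t)=L^{1/2}\,(\partial_s\Phi_{j,1,s})\big|_{s=Lr}(t/L),
\]
and therefore, after the substitution $u=t/L$,
\[
\int_{-L}^{L}|\partial_r\Phi_{j,L,r}(t)|^2\dd t
= L^2\int_{-1}^{1}|(\partial_s\Phi_{j,1,Lr})(u)|^2\dd u.
\]
Since $L\le C$ and $r\in(0,r_0)$, the parameter $s=Lr$ ranges over the bounded set $(0,Cr_0)$. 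The map $s\mapsto \|\partial_s\Phi_{j,1,s}\|_{L^2(-1,1)}$ is continuous and extends continuously to $s=0$ (the Neumann limit has simple eigenvalues), hence is bounded on $[0,Cr_0]$; this produces the constant $C_0$.

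The main obstacle will be the global control of $\phi$ in (b) and (c): for $E_2$ the eigenvalue crosses zero at $s=1$, where the separation-of-variables Ansatz switches from $\sin$ to $\sinh$, and one must check that the remainder term $s^2\phi(s)$ stays bounded across this transition and remains bounded all the way to infinity. Either a careful treatment of the transcendental equations on both sides of $s=1$, or a Dirichlet--Neumann bracketing argument which sandwiches $E_j(B_{1,s})$ between $-s^2-O(s)$ and $-s^2+O(s)$ for large $s$, should handle this uniformly and complete the proof.
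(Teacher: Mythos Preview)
Your proposal is correct and follows essentially the same route as the paper: part (a) via the unitary scaling $\mathcal{U}f=\sqrt{L}\,f(L\cdot)$, parts (b) and (c) by defining $\phi$ through the $L=1$ problem and checking smoothness via analytic perturbation theory together with finite limits at $0$ (Hellmann--Feynman with the Neumann eigenfunctions) and at $\infty$, and part (d) via the scaling $\Phi_{j,L,r}(t)=L^{-1/2}\Phi_{j,1,Lr}(t/L)$ combined with the uniform bound on $\|\partial_s\Phi_{j,1,s}\|_{L^2(-1,1)}$ over compact $s$-ranges. The only cosmetic difference is that the paper dispatches the large-$r$ behaviour of $E_1(B_{1,r})$ and $E_2(B_{1,r})$ by citing \cite{kp18} and \cite{hp}, whereas you sketch the transcendental-equation argument directly; both lead to $\phi(s)\to -1$ and hence boundedness.
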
 

\begin{proof}
	To prove the claim in $(a)$ one can use the unitary operator $\mathcal{U}:L^2(-L,L)\ni f\mapsto \sqrt{L}f(L\cdot)\in L^2(-1,1)$ to show that $\mathcal{U}B_{L,r}\,\mathcal{U}^{-1}=\frac{1}{L^2}B_{1,rL}$.

	Regarding $(b)$: In \cite[Proposition 2.2]{kp18} it was shown that there exists $\phi\in C^{\infty}(\R_+)\cap L^{\infty}(\R_+)$ such that for all $r>0$ there holds $E_1(B_{1,r})=-r+r^2\phi(r)$. To conclude the proof of $(b)$ we use $E_1(B_{L,r})=\frac{1}{L^2}E_1(B_{1,rL})$ from $(a)$.
	
	For $(c)$ we define $\phi(r):=r^{-2}(E_2(B_{1,r})-\frac{\pi^2}{4}+2r)$. First of all the function $\R\ni r\mapsto E_2(B_{1,r})$ is analytic, since $B_{1,r}$ is a type (B) analytic family and has compact resolvent and $E_2(B_{1,r})$ has multiplicity one for all $r$, which implies that $\phi$ is smooth, see e.g. \cite[Remark 4.22 in Ch. 7]{kato}. To show the boundedness of $\phi$ we prove that it admits finite limits, i.e. as $r\rightarrow 0$ and $r\rightarrow \infty$. For $r\rightarrow 0$ we would like to have a Taylor expansion of $r\mapsto E_2(B_{1,r})$ in $0$. According to \cite[Equation (4.12)]{bfk} we can use an eigenfunction corresponding to the second Neumann eigenvalue $E_2(B_{1,0})=\frac{\pi^2}{4}$ to get an expression for $\partial_r E_2(B_{1,r})|_{r=0}$. Namely
	\[
	\partial_r E_2(B_{1,r})|_{r=0}=-\frac{|\Phi_{2,1,0}(1)|^2+|\Phi_{2,1,0}(-1)|^2}{\|\Phi_{2,1,0}\|^2_{L^2(-1,1)}},
	\]
where $\Phi_{2,1,0}(t)=\sin\left(\frac{\pi t}{2}\right)$ is a corresponding eigenfunction for $E_2(B_{1,0})$ and therefore $\partial_r E_2(B_{1,r})|_{r=0}=-2$. It follows that $E_2(B_{1,r})=\frac{\pi^2}{4}-2r+O(r^2)$ as $r\rightarrow 0$ and therefore $\phi(r)=O(1)$ as $r\rightarrow0$. For the case $r\rightarrow\infty$ we use the result from $(a)$ to get $E_2(B_{1,r})=r^2E_2(B_{r,1})$. Now we can use \cite[Proposition A.3.]{hp}, which states that $E_2(B_{r,1})=-1+O(r^2e^{-2r})$ as $r\rightarrow\infty$. Plugging these results back into $\phi$ yields $\phi(r)=O(1)$ as $r\rightarrow\infty$. Once again we use the result from $(a)$ to conclude the proof of $(c)$.
	
	Lastly we fix some $j\in\N$. The existence of the family $(\Phi_{j,L,r})_{r\in\R}$ mentioned in $(d)$ also follows from $B_{L,r}$ being a type (B) analytic family, see again \cite[Remark 4.22 in Ch. 7]{kato}. To prove the second claim in $(d)$, choose a family $(\Phi_{j,1,r})_{r\in\R}$ such that $r\mapsto \Phi_{j,1,r}$ is smooth. Then for all $\widetilde{r}_0>0$ there exists $C_0>0$ such that
	\begin{equation}	\label{bddEF}	
	 \int_{-1}^{1}|\partial_r\Phi_{j,1,r}(t)|^2\dd t\le C_0\quad \quad \text{ for all }r\in(0,\widetilde{r}_0).
	 \end{equation}
Now let $L\le C$, $r_0>0$ and $\widetilde{r}_0 := Cr_0$. Use the unitary operator $\U$ mentioned in the proof of $(a)$ to get $\Phi_{j,L,r}=\U^{-1}\Phi_{j,1,Lr}=L^{-\frac{1}{2}}\Phi_{j,1,Lr}(\frac{\cdot}{L})$. By equation \eqref{bddEF} there exists $C_0$ such that
	\begin{align*}	
	\int_{-L}^{L}|\partial_r\Phi_{j,L,r}(t)|^2\dd t&=L\int_{-L}^{L}\left(\partial_{\rho}\Phi_{j,1,\rho}\left(\frac{t}{L}\right)\bigg|_{\rho=Lr}\right)^2\dd t
	\\	
	&=L^{2}\int_{-1}^{1}\left(\partial_{\rho}\Phi_{j,1,\rho}(t)\big|_{\rho=Lr}\right)^2\dd t\le C_0L^2.
	\end{align*}
	holds.
\end{proof}
For further reference we need the following Lemma.
\begin{lemma}\label{lem2}
Let $[a,b]\subset \R$ be an interval and $\rho,\widetilde\rho\in C^1([a,b])$ with $\rho,\widetilde\rho\ge0$. Then for all $j,k\in\N$ and all $C>0$ there exists $C_0>0$ such that 
\begin{equation*}
\int_{-L}^L\int_{-\widetilde L}^{\widetilde L}|\partial_s (\Phi_{j,L,\rho(s)}(t_1)\Phi_{k,\widetilde L,\widetilde\rho(s)}(t_2))|^2\dd t_2\dd t_1\le C_0(L^2+\widetilde L^2),
\end{equation*}
for all $s\in[a,b]$ and all $L,\widetilde L\le C$. Here $(\Phi_{j,L,r})_{r\in\R}$ and $(\Phi_{k,\widetilde L,r})_{r\in\R}$ are smooth families of normalized eigenfunctions as defined in Lemma \ref{lem-1} (d).
\end{lemma}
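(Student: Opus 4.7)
The plan is to reduce the claim, via the product rule and Fubini, to the one-dimensional bounds already supplied by Lemma \ref{lem-1}(d). First I would compute
\begin{align*}
\partial_s\bigl(\Phi_{j,L,\rho(s)}(t_1)\Phi_{k,\widetilde L,\widetilde\rho(s)}(t_2)\bigr) &= \rho'(s)\,\Psi_{j,L,\rho(s)}(t_1)\,\Phi_{k,\widetilde L,\widetilde\rho(s)}(t_2) \\
&\quad + \widetilde\rho'(s)\,\Phi_{j,L,\rho(s)}(t_1)\,\Psi_{k,\widetilde L,\widetilde\rho(s)}(t_2),
\end{align*}
where $\Psi_{j,L,r}:=\partial_r\Phi_{j,L,r}$ is an $L^2$-valued smooth function of $r$ by Lemma \ref{lem-1}(d). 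Next, the elementary inequality $(A+B)^2\le 2A^2+2B^2$ splits $|\partial_s(\cdots)|^2$ into two tensor-product terms, each of which Fubini decouples into a product of two one-dimensional integrals.

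Then I would exploit normalization: two of the four resulting factors equal $\|\Phi_{j,L,\rho(s)}\|_{L^2(-L,L)}^2=1$ and $\|\Phi_{k,\widetilde L,\widetilde\rho(s)}\|_{L^2(-\widetilde L,\widetilde L)}^2=1$. For the two remaining factors I use Lemma \ref{lem-1}(d). Since $\rho$ and $\widetilde\rho$ are continuous on the compact interval $[a,b]$, one can choose $r_0>0$ with $\rho([a,b])\cup\widetilde\rho([a,b])\subset[0,r_0)$, and Lemma \ref{lem-1}(d) then yields constants $C_1,C_2>0$, uniform in $L,\widetilde L\le C$ and $s\in[a,b]$, such that
\begin{equation*}
\int_{-L}^{L}|\Psi_{j,L,\rho(s)}(t_1)|^2\,\dd t_1\le C_1 L^2,\qquad \int_{-\widetilde L}^{\widetilde L}|\Psi_{k,\widetilde L,\widetilde\rho(s)}(t_2)|^2\,\dd t_2\le C_2\widetilde L^2.
\end{equation*}
Combining this with the boundedness of $\rho',\widetilde\rho'$ on $[a,b]$ delivers the desired inequality with $C_0:=2\max\bigl(\|\rho'\|_\infty^2 C_1,\|\widetilde\rho'\|_\infty^2 C_2\bigr)$.

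The only technicality worth flagging is that Lemma \ref{lem-1}(d) is phrased with $r\in(0,r_0)$, whereas the hypothesis $\rho,\widetilde\rho\ge 0$ allows the value $r=0$. This is harmless: the smoothness of $r\mapsto\Phi_{j,L,r}\in L^2(-L,L)$ makes $r\mapsto\|\Psi_{j,L,r}\|_{L^2}^2$ continuous, so the estimate extends to the closed interval $[0,r_0]$ by continuity. There is no genuine analytic obstacle here; the lemma is essentially a tensorial book-keeping exercise built on top of the one-dimensional Kato-perturbation bound already recorded in Lemma \ref{lem-1}(d).
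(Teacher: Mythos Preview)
Your proof is correct and follows essentially the same approach as the paper: product rule, the inequality $(A+B)^2\le 2A^2+2B^2$, Fubini together with the normalization $\|\Phi\|_{L^2}=1$, and then Lemma~\ref{lem-1}(d) combined with the boundedness of $\rho,\widetilde\rho,\rho',\widetilde\rho'$ on $[a,b]$. Your remark on the endpoint $r=0$ is a welcome addition that the paper leaves implicit.
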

\begin{proof}
Let $C>0$ and $L,\widetilde L\le C$. By Young's inequality we have
\begin{equation*}
\begin{aligned}
\left[\partial_s (\Phi_{j,L,\rho(s)}(t_1)\Phi_{k,\widetilde L,\widetilde\rho(s)}(t_2))\right]^2=&\big[\rho'(s)\partial_r \Phi_{j,L,r}(t_1)\big|_{r=\rho(s)}\Phi_{k,\widetilde L,\widetilde\rho(s)}(t_2)
\\
&+\widetilde\rho\ '(s)\Phi_{j,L,\rho(s)}(t_1)\partial_r\Phi_{k,\widetilde L,r}(t_2)\big|_{r=\widetilde\rho(s)}\big]^2
\\
\le& 2\left[\rho'(s)\partial_r \Phi_{j,L,r}(t_1)\big|_{r=\rho(s)}\Phi_{k,\widetilde L,\widetilde\rho(s)}(t_2)\right]^2
\\
&+2\left[\widetilde\rho\ '(s)\Phi_{j,L,\rho(s)}(t_1)\partial_r\Phi_{k,\widetilde L,r}(t_2)\big|_{r=\widetilde\rho(s)}\right]^2,
\end{aligned}
\end{equation*}
for all $t_1\in[-L,L]$, $t_2\in[-\widetilde L, \widetilde L]$ and $s\in[a,b]$.
Furthermore there exists $C_1>0$ and $C_0>0$ such that for all $s\in[a,b]$
\begin{align*}
\int_{-L}^L\int_{-\widetilde L}^{\widetilde L}|\partial_s (&\Phi_{j,L,\rho(s)}(t_1)\Phi_{k,\widetilde L,\widetilde\rho(s)}(t_2))|^2\dd t_2\dd t_1
\\
\le\int_{-L}^L\int_{-\widetilde L}^{\widetilde L}2\Big[&\rho'(s)\partial_r \Phi_{j,L,r}(t_1)\big|_{r=\rho(s)}\Phi_{k,\widetilde L,\widetilde\rho(s)}(t_2)\Big]^2
\\
&+2\Big[\widetilde\rho\ '(s)\Phi_{j,L,\rho(s)}(t_1)\partial_r\Phi_{k,\widetilde L,r}(t_2)\big|_{r=\widetilde\rho(s)}\Big]^2\dd t_2 \dd t_1
\\
=\int_{-L}^L 2\Big[\rho'(s)&\partial_r \Phi_{j,L,r}(t_1)\big|_{r=\rho(s)}\Big]^2\dd t_1+\int_{-\widetilde L}^{\widetilde L}2\Big[\widetilde\rho\ '(s)\partial_r\Phi_{k,\widetilde L,r}(t_2)\big|_{r=\widetilde\rho(s)}\Big]^2\dd t_2
\\
\le C_1 \int_{-L}^L \Big(&\partial_r \Phi_{j,L,r}(t_1)\big|_{r=\rho(s)}\Big)^2\dd t_1+C_1\int_{-\widetilde L}^{\widetilde L}\Big( \partial_r\Phi_{k,\widetilde L,r}(t_2)\big|_{r=\widetilde\rho(s)}\Big)^2\dd t_2
\\
\le C_0(L^2+&\widetilde L^2).
\end{align*}
For the equality we used that $\Phi_{j,L,r}$ and $\Phi_{k,\widetilde L,r}$ are normalized eigenfunctions, for the second inequality we used that $\rho'$ and $\widetilde\rho\ '$ are bounded  and for the last inequality we used that $\rho$ and $\widetilde \rho$ are bounded combined with Lemma \ref{lem-1} $(d)$.
\end{proof}

\subsection{One-dimensional model operators}\label{compop}
Let $c_1,c_2\ge 0$ such that $c_1+c_2>0$ and consider the symmetric differential operator in $L^2(0,\infty)$ given by
\begin{equation*}
	C_c^\infty(0,\infty) \ni f \mapsto -f'' + \bigg(\frac{(p+q)^2-2(p+q)}{4s^2}-\frac{c_1}{s^p}-\frac{c_2}{s^q}\bigg) f.
\end{equation*}
Due to $(p+q)^2-2(p+q)=((p+q)-1)^2-1$ and the Hardy inequality
\begin{equation*}
\int_0^\infty |f'|^2\dd s\ge\int_0^\infty \frac{f^2}{4s^2}\dd s\quad \text{for $f\in C_c^\infty(0,\infty)$}
\end{equation*}
the operator is semibounded from below.
Denote by $A_{c_1,c_2}$ its Friedrichs extension and by $a_{c_1,c_2}$ its corresponding quadratic form. One can show that $A_{c_1,c_2}$ has infinitely many negative eigenvalues and for $c>0$ there holds the unitary equivalence
\begin{equation}\label{uemodel}
A_{c_1,c_2}\simeq c^2A_{c_1c^{p-2},c_2c^{q-2}}.
\end{equation}
In what follows we will need to work with truncated versions of $A_{c_1,c_2}$. Namely for $b>0$ we denote by $M_{c_1,c_2,b}$ and $\widetilde{M}_{c_1,c_2,b}$ the Friedrichs extensions in $L^2(0,b)$ and $L^2(b,\infty)$ of the operators $C_{c}^\infty (0,b)\ni f\mapsto A_{c_1,c_2} f$ and $C_{c}^\infty (b,\infty)\ni f\mapsto A_{c_1,c_2}f$ respectively. Remark that by construction the form domain of $M_{c_1,c_2,b}$ is contained in $H_0^1(0,b)$, which implies that $M_{c_1,c_2,b}$ has compact resolvent. We need to relate the eigenvalues of $M_{c_1,c_2,b}$ to those of $A_{c_1,c_2}$. As the quadratic form of $A_{c_1,c_2}$ extends that of $M_{c_1,c_2,b}$, one has, due to the min-max principle,
\begin{equation}\label{model lowbd}
E_j(M_{c_1,c_2,b})\geq E_j(A_{c_1,c_2})\quad \text{ for any }\, b>0, j\in\NN.
\end{equation}
Let us now obtain an asymptotic upper bound for $E_j(M_{c_1,c_2,b})$.
\begin{lemma}\label{modelop1}
Let $b>0$ and $j\in\NN$. There exist $K>0$ and $\alpha_0>0$ such that
\[
E_j(M_{\alpha^{p-1},\alpha^{q-1},b})\le E_j(A_{\alpha^{p-1},\alpha^{q-1}})+K\quad \text{for all }\alpha>\alpha_0.
\]
\end{lemma}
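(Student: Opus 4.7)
By the min-max principle, it suffices to construct a $j$-dimensional subspace of the form domain of $M_{\alpha^{p-1},\alpha^{q-1},b}$ on which the quadratic form $a_{\alpha^{p-1},\alpha^{q-1}}$ is bounded from above by $E_j(A_{\alpha^{p-1},\alpha^{q-1}})+K$. Let $\psi_1,\ldots,\psi_j$ be the first $j$ orthonormal eigenfunctions of $A_{\alpha^{p-1},\alpha^{q-1}}$, write $E_i:=E_i(A_{\alpha^{p-1},\alpha^{q-1}})$, fix a cutoff $\chi\in C^\infty([0,b])$ satisfying $\chi\equiv 1$ on $[0,b/2]$ and $\chi\equiv 0$ in a neighborhood of $b$, and take the test subspace $V_j:=\mathrm{span}\{\chi\psi_1,\ldots,\chi\psi_j\}$. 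Since $\chi$ vanishes near $b$ and each $\psi_i$ belongs to the Friedrichs form domain of $A_{\alpha^{p-1},\alpha^{q-1}}$, the truncations $\chi\psi_i$ belong to the form domain of $M_{\alpha^{p-1},\alpha^{q-1},b}$, and for $\alpha$ large they are linearly independent.

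The core computation is the algebraic identity
\begin{equation*}
a_{\alpha^{p-1},\alpha^{q-1}}(\chi\psi_i,\chi\psi_k)=\frac{E_i+E_k}{2}\int_0^b\chi^2\psi_i\psi_k\,\mathrm{d}s+\int_0^b(\chi')^2\psi_i\psi_k\,\mathrm{d}s,
\end{equation*}
which I would derive by substituting $-\psi_i''+V\psi_i=E_i\psi_i$ (with $V$ the Schr\"odinger potential from Section \ref{compop}) and integrating by parts, the boundary terms vanishing at $b$ since $\chi\equiv 0$ there and at $0$ thanks to the $\psi_i(s)\sim s^{(p+q)/2}$ behavior imposed by the Friedrichs extension (the indicial equation of $-f''+\frac{(p+q)^2-2(p+q)}{4s^2}f=0$ gives the roots $(p+q)/2$ and $1-(p+q)/2$, and Friedrichs selects the regular one). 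Writing $f=\sum_i\beta_i\chi\psi_i$, this turns the Rayleigh quotient into
\begin{equation*}
\frac{a_{\alpha^{p-1},\alpha^{q-1}}(f,f)}{\|f\|^2}=\frac{\tfrac12\beta^\top(DG+GD)\beta+\beta^\top H\beta}{\beta^\top G\beta},\qquad D:=\mathrm{diag}(E_1,\ldots,E_j),
\end{equation*}
with Gram matrix $G_{ik}:=\int_0^b\chi^2\psi_i\psi_k\,\mathrm{d}s$ and cost matrix $H_{ik}:=\int_0^b(\chi')^2\psi_i\psi_k\,\mathrm{d}s$.

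To finish, I would control $G-I$ and $H$ by invoking the rescaling \eqref{uemodel} with $c=\alpha^{(q-1)/(2-q)}$, which identifies $A_{\alpha^{p-1},\alpha^{q-1}}$ with $\alpha^{2(q-1)/(2-q)}A_{\varepsilon,1}$, where $\varepsilon:=\alpha^{(p-q)/(2-q)}\to 0$. For $\varepsilon$ small the perturbation $-\varepsilon s^{-p}$ is form-subordinate to the Hardy potential $s^{-2}$ with arbitrarily small form-bound, so $A_{\varepsilon,1}\to A_{0,1}$ in norm-resolvent sense, and a uniform Agmon estimate yields exponential decay of the rescaled eigenfunctions outside a fixed interval $(0,R_j)$. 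Pulling back, $\psi_i$ is concentrated in $(0,R_j\alpha^{-(q-1)/(2-q)})\subset(0,b/2)$ for $\alpha$ large, and both $\|G-I\|$ and $\|H\|$ are bounded by $\eta:=Ce^{-\kappa\alpha^{(q-1)/(2-q)}}$. Since $|E_j|=O(\alpha^{2(q-1)/(2-q)})$ by \eqref{uemodel}, a direct estimate of the displayed ratio gives $\sup_{f\in V_j}a_{\alpha^{p-1},\alpha^{q-1}}(f,f)/\|f\|^2\le E_j+O(\eta|E_j|)+O(\eta)=E_j+o(1)$, proving the lemma with any fixed $K>0$ once $\alpha$ is sufficiently large. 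The principal technical obstacle is the uniform-in-$\varepsilon$ Agmon decay estimate: one must verify that $E_j(A_{\varepsilon,1})$ stays uniformly bounded away from the threshold $0$ of the essential spectrum as $\varepsilon\to 0$, so that the Agmon weight inherited from $V_\varepsilon-E_j(A_{\varepsilon,1})$ admits an $\varepsilon$-independent lower bound outside a fixed compact interval.
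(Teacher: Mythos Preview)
Your approach is correct but takes a genuinely different route from the paper. The paper uses an IMS partition of unity $\chi_1^2+\chi_2^2=1$ with $\chi_1$ supported in $[0,\tfrac{3b}{4}]$ and $\chi_2$ supported in $[\tfrac{b}{2},\infty)$: from $a(f,f)+K\|f\|^2\ge a(\chi_1 f,\chi_1 f)+a(\chi_2 f,\chi_2 f)$ with $K=\|\chi_1'\|_\infty^2+\|\chi_2'\|_\infty^2$ one obtains by min-max
\[
E_j(A_{\alpha^{p-1},\alpha^{q-1}})+K\ge\min\big\{E_j(M_{\alpha^{p-1},\alpha^{q-1},b}),\,\Lambda_1(\Tilde M_{\alpha^{p-1},\alpha^{q-1},b/4})\big\},
\]
and the crude bound $\Lambda_1(\Tilde M_{\alpha^{p-1},\alpha^{q-1},b/4})\ge-C\alpha^{q-1}$ together with $E_j(A_{\alpha^{p-1},\alpha^{q-1}})\le\alpha^{2(q-1)/(2-q)}E_j(A_{0,1})$ forces the minimum to be $E_j(M_{\alpha^{p-1},\alpha^{q-1},b})$ for large $\alpha$. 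This avoids any analysis of eigenfunctions and delivers an explicit, $\alpha$-independent constant $K$. Your route---a test subspace of truncated eigenfunctions, controlled via uniform Agmon decay after the rescaling $A_{\alpha^{p-1},\alpha^{q-1}}\simeq\alpha^{2(q-1)/(2-q)}A_{\eps,1}$---is heavier but yields the sharper conclusion $E_j(M)\le E_j(A)+o(1)$ with an exponentially small error. One simplification worth noting: norm-resolvent convergence is more than you need; the form monotonicity $A_{\eps,1}\le A_{0,1}$ already gives $E_j(A_{\eps,1})\le E_j(A_{0,1})<0$ uniformly, and since the potential tails $V_\eps(s)\to 0$ are uniform in $\eps\in[0,1]$ this suffices for the $\eps$-independent Agmon weight you need.
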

\begin{proof}
	The proof is quite standard and uses a so-called IMS partition of unity \cite[Sec.~3.1]{bsim}.
	Let $\chi_1$ and $\chi_2$ be two smooth functions on $\R$ with $0\le\chi_1,\chi_2\le 1$, such that
	$\chi_1^2+\chi_2^2=1$, $\chi_1(s)=0$ for $s>\frac{3}{4}\, b$, $\chi_2(s)=0$ for $s<\frac{1}{2}\, b$.
	We set $K:=\|\chi_1'\|_\infty^2+\|\chi_2'\|_\infty^2$.
	An easy computation shows that
	\begin{align*}
		\int_0^\infty |f'|^2 \dd s & =\int_0^\infty \big|(\chi_1 f)'\big|^2 \dd s +\int_0^\infty \big|(\chi_2 f)'\big|^2 \dd s
		-\int_0^\infty \Big(|\chi_1'|^2+|\chi_2'|^2\Big) f^2\, \dd s\\
		& \ge \int_0^\infty \big|(\chi_1 f)'\big|^2 \dd s +\int_0^\infty \big|(\chi_2 f)'\big|^2 \dd s
		- K \|f\|^2_{L^2(0,\infty)},
	\end{align*}
	for any $f\in C^\infty_c(0,\infty)$, which implies
	\begin{align*}
		\big\langle f&, A_{\alpha^{p-1},\alpha^{q-1}} f\big\rangle_{L^2(0,\infty)}+K \|f\|^2_{L^2(0,\infty)}
		\\
		&\ge \big\langle \chi_1 f, A_{\alpha^{p-1},\alpha^{q-1}}(\chi_1 f)\big\rangle_{L^2(0,\infty)}+\big\langle 
		\chi_2 f, A_{\alpha^{p-1},\alpha^{q-1}}(\chi_2 f)\big\rangle_{L^2(0,\infty)}\\
		\\
		&= \big\langle \chi_1 f, A_{\alpha^{p-1},\alpha^{q-1}}(\chi_1 f)\big\rangle_{L^2(0,b)}+\big\langle 
		\chi_2 f, A_{\alpha^{p-1},\alpha^{q-1}}(\chi_2 f)\big\rangle_{L^2(\frac{b}{4},\infty)}.
	\end{align*}
	Using the identity $\|f\|^2_{L^2(0,\infty)}=\|\chi_1 f\|^2_{L^2(0,b)}+\|\chi_2 f\|^2_{L^2(\frac{b}{4},\infty)}$
	and the obvious inclusions $\chi_1 f\in C^\infty_c(0,b)$, $\chi_2 f\in C^\infty_c(\frac{b}{4},\infty)$,
	we apply the min-max principle as follows:
	\begin{equation}
		\label{eq-ineq00}
		\begin{aligned}
			&E_j(A_{\alpha^{p-1},\alpha^{q-1}})\!+\!K =\! \inf_{\substack{S\subset C^\infty_c(0,\infty)\\ \dim S=j}} \sup_{\substack{f\in S\\ f\ne 0}} \dfrac{\langle f, A_{\alpha^{p-1},\alpha^{q-1}} f\rangle_{L^2(0,\infty)}+K\|f\|^2_{L^2(0,\infty)}}{\|f\|^2_{L^2(0,\infty)}}\\
			& \ge\! \inf_{\substack{S\subset C^\infty_c(0,\infty)\\\dim S=j}} \sup_{\substack{f\in S \\ f\ne 0}} \dfrac{\big\langle \chi_1 f, A_{\alpha^{p-1},\alpha^{q-1}}\!(\chi_1 f)\big\rangle_{L^2(0,b)}\!+\big\langle \chi_2 f, A_{\alpha^{p-1},\alpha^{q-1}}(\chi_2 f)\big\rangle_{L^2(\frac{b}{4},\infty)}}{\|\chi_1 f\|^2_{L^2(0,b)}+\|\chi_2 f\|^2_{L^2(\frac{b}{4},\infty)}} 
			\\
			& \ge \!\inf_{\substack{S\subset C^\infty_c(0,b)\times C^\infty_c(\frac{b}{4},\infty)\\ \dim S=j}}\!  \sup_{\substack{(f_1,f_2) \in S\\ (f_1,f_2)\ne 0}} \dfrac{\big\langle f_1 , A_{\alpha^{p-1},\alpha^{q-1}}\, f_1\big\rangle_{L^2(0,b)}
				\!\!+\!\big\langle f_2 , A_{\alpha^{p-1},\alpha^{q-1}}\, f_2\big\rangle_{L^2(\frac{b}{4},\infty)}}{\|f_1\|^2_{L^2(0,b)}+\|f_2\|^2_{L^2(\frac{b}{4},\infty)}}\\
			& =\Lambda_j\big(M_{\alpha^{p-1},\alpha^{q-1},b}  \oplus \Tilde M_{\alpha^{p-1},\alpha^{q-1}, \frac{b}{4}}\big)\!\ge\! \min\big\{\!\Lambda_j(M_{\alpha^{p-1},\alpha^{q-1},b}),\Lambda_1(\Tilde M_{\alpha^{p-1},\alpha^{q-1}, \frac{b}{4}})\big\}.
		\end{aligned}
	\end{equation}
For any $j\in\NN$ we have $\Lambda_j(M_{\alpha^{p-1},\alpha^{q-1},b})=E_j(M_{\alpha^{p-1},\alpha^{q-1},b})$. At the same time, for any function $f\in C_c^\infty(\frac{b}{4},\infty)$ one has
	\begin{align*}
		\langle f, \Tilde M_{\alpha^{p-1},\alpha^{q-1}, \frac{b}{4}} f\rangle_{L^2(\frac{b}{4},\infty)}&=\int_{\frac{b}{4}}^\infty|f'|^2+
		\left[\frac{(p+q)^2-2(p+q)}{4s^2}-\frac{\alpha^{p-1}}{s^p}-\frac{\alpha^{q-1}}{s^q}\right] f^2\dd s\\
		&\ge -\left(\frac{4^p}{b^p}\alpha^{p-1}+\frac{4^q}{b^q}\alpha^{q-1}\right)\|f\|^2_{L^2(\frac{b}{4},\infty)},
	\end{align*}
	which gives the lower bound $\Lambda_1(\Tilde M_{\alpha^{p-1},\alpha^{q-1}, \frac{b}{4}})\ge  -\frac{4^p}{b^p}\alpha^{p-1}-\frac{4^q}{b^q}\alpha^{q-1}$. Due to 
\[	
	A_{\alpha^{p-1},\alpha^{q-1}}\le A_{0,\alpha^{q-1}}\quad \text{and}\quad A_{0,\alpha^{q-1}}\simeq \alpha^{2\frac{q-1}{2-q}} A_{0,1}
\]	
	 we conclude that if $j\in\NN$ is fixed, then one can find some $\alpha_0>0$
	such that for all $\alpha>\alpha_0$ there holds $E_j(A_{\alpha^{p-1},\alpha^{q-1}})+K<\Lambda_1(\Tilde M_{\alpha^{p-1},\alpha^{q-1}, \frac{b}{4}})$.
	Then \eqref{eq-ineq00} implies $E_j(A_{\alpha^{p-1},\alpha^{q-1}})+K\ge E_j(M_{\alpha^{p-1},\alpha^{q-1},b})$.
\end{proof}

\subsection{Scheme of the proof}\label{sec-iso peak}

We are interested in large Robin parameters, so from now on we only consider $\alpha\ge 1$. Let us also remark that the same dilation arguments mentioned in Subsection \ref{isolateing peak} can be used to show $Q_\Omega^\alpha\simeq \alpha^2 Q_{\alpha\Omega}^1$. The peak will determine the asymptotic behavior of the eigenvalues, so we start by restricting the Robin-Laplacian $Q_{\alpha\Omega}^1$ to the peak of $\alpha\Omega$. More precisely, pick some $0<a<\min(\delta,1)$ (this value will remain fixed through the whole text), and denote
\begin{align*}
	V_{\alpha}&:=\alpha\Omega\cap(-a,a)^3\subset\R^{3},\\
	\partial_0 V_{\alpha}&:=\{(x_1,x_2,x_3)\in\partial V_\alpha:x_3<a\}\subset \partial V_{\alpha},    \\
	\Hat H^1_0(V_{\alpha})&:=\{u\in H^1(V_\alpha):u(\cdot,a)=0\},\\
	C^\infty_{(0,a)}(\Bar V_\alpha)&:=\big\{ u\in C^\infty(\Bar V_\alpha):\exists\, [b,c]\subset (0, a)\;\text{s.t.}\;\text{$u(x_1,x_2,x_3)=0$ for $x_3\notin [b,c]$} \big\}.
\end{align*}
Let $T_\alpha$ be the self-adjoint operator in $L^2(V_{\alpha})$ associated with the quadratic form 
\begin{equation}
	\label{teps1}
	t_\alpha (u,u):=\int_{V_{\alpha}} |\nabla u|^2\dd x- \int_{\partial_0 V_{\alpha}} u^2\, \dd\sigma, \quad D(t_\alpha)=\Hat H^1_0(V_{\alpha}),
\end{equation}
then $T_\alpha$ can be informally interpreted as the Laplacian in $V_\alpha$ with the Robin boundary condition $\partial_\nu u=u$ on $\partial_0 V_\alpha$ and the Dirichlet boundary condition on the remaining boundary $\partial V_\alpha\setminus \partial_0 V_\alpha$ (which corresponds to $x_3=a$). In view of Lemma~\ref{prop4dens} $C^\infty_{(0,a)}(\Bar V_\alpha)$ is dense in $\Hat H^1_0(V_{\alpha})$, so by the min-max principle the variational eigenvalues of $T_\alpha$ are given by
\begin{equation} \label{ej-vp0}
	E_j(T_\alpha) = \inf_{\substack{S\subset D_0(t_\alpha) \\ {\rm dim\, } S =j} } \, \sup_{\substack{u\in S\\ u\neq 0}}\,  \frac{t_\alpha(u,u)}{\quad \|u\|_{L^2(V_{\alpha})}^2}\,,
	\quad
	D_0(t_\alpha):=C^\infty_{(0,a)}(\Bar V_\alpha),
	\quad j\in\NN.
\end{equation}
Using  a suitable change of coordinates and the spectral analysis of $B_{L,r}$, the study of eigenvalues of $T_\alpha$ for large $\alpha$ is reduced to the one-dimensional operators $A_{c_1,c_2}$.
In Section \ref{ssthm1} we use \eqref{sandwichRobin} as a starting point to prove that the eigenvalues of $T_\alpha$ and $Q_{\alpha\Omega}^1$ are asymptotically close to each other.

\section{Spectral analysis near the peak}\label{sec-teps}

\subsection{Change of variables} \label{sec-subs0}
We need suitable coordinates on $\partial_0V_\alpha$, therefore we transform coordinates similar to~\cite{kov}. 
Consider the diffeomorphism
\begin{equation*}\label{diffeo}
X:\R^2\times(0,\infty)\to \R^2\times(0,\infty), \quad
X(t_1,t_2,s)=(s^p t_1,s^q t_2,s)
\end{equation*}
then one checks that
\begin{gather*}
	V_{\alpha} = X (\Pi_{\alpha}), \quad \Pi_{\alpha} := \omega_{\alpha}\times (0,a), \quad
\omega_{\alpha}:=(-\alpha^{1-p},\alpha^{1-p})\times (-\alpha^{1-q},\alpha^{1-q}),
\\
\partial_0 V_\alpha=X \big((\partial\omega_\alpha)\times(0,a) \big) \cup\{0\}.
\end{gather*}
Remark that $\{0\}$ has zero two-dimensional Hausdorff measure and can be neglected in the integration over $\partial_0V_\alpha$.
This induces the unitary transform (change of variables)
\begin{equation*}\label{unitaryU}
\U: L^2(V_{\alpha})\to L^2(\Pi_\alpha, s^{p+q}\dd s\,\dd t),
\quad
\U\, u := u\circ X,
\end{equation*} 
where we used the notation $t=(t_1,t_2)$. Consider the quadratic form $r_\alpha$
given by 
\[
r_\alpha(u,u) := t_\alpha (\U^{-1} u,\U^{-1} u), \quad D(r_\alpha):=\U D(t_\alpha),
\]
in $L^2(\Pi_\alpha, s^{p+q}\dd s\,\dd t)$. Due to the unitarity of $\U$ and Lemma~\ref{prop4dens}, the subspace
\begin{align*}
D_0(r_\alpha):=&\,\U\, D_0(t_\alpha)\\
=&  \big\{u\in C^\infty(\overline{\Pi}_{\alpha}):\, \exists\, [b,c]\subset (0,a) \text{ such that } u(t,s)=0 \text{ for } s\notin[b,c]\big\},
\end{align*}
is dense in $D(r_\alpha)$, and by \eqref{ej-vp0} one has
\begin{equation} \label{ej-vp2a}
E_j(T_\alpha) = \inf_{\substack{S\subset D_0(r_\alpha) \\ {\rm dim\, } S =j} } \, \sup_{\substack{u\in S\\ u\neq 0}}\,  \frac{r_\alpha(u,u)}{\quad \|u\|_{L^2(\Pi_{\alpha}, s^{p+q}\dd s\,\dd t)}^2}.
\end{equation} 

Now we would like to obtain a more convenient expression for $r_\alpha(u,u)$.
\begin{lemma}\label{lem5}
For any $v\in D_0(t_\alpha)$ and $u:=\U v\in D_0(r_\alpha)$,
\begin{multline*}
\int_0^a \int_{\omega_{\alpha}}
\Big[\big(1-\alpha^{1-p}-\alpha^{1-q} \big)\, |\partial_s u|^2 + \dfrac{1-p^2\alpha^{1-p}}{s^{2p}}\, |\partial_{t_{1}} u|^2+ \dfrac{1-q^2\alpha^{1-q}}{s^{2q}}\, |\partial_{t_{2}} u|^2\Big]s^{p+q}\dd t \dd s\\
\leq \int_{V_\alpha} |\nabla v|^2\dd x
\\
\leq \int_0^a \int_{\omega_{\alpha}}
\Big[\big(1+\alpha^{1-p}+\alpha^{1-q} \big)\, |\partial_s u|^2 + \dfrac{1+p^2(\alpha^{1-p}+2\alpha^{2-2p})}{s^{2p}}\, |\partial_{t_{1}} u|^2
\\
+ \dfrac{1+q^2(\alpha^{1-q}+2\alpha^{2-2q})}{s^{2q}}\, |\partial_{t_{2}} u|^2\Big]s^{p+q}\dd t \dd s.
\end{multline*}
\end{lemma}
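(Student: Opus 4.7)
The plan is to change variables via $X$, compute $(\nabla v)\circ X$ using the chain rule, and then reduce the statement to Young's inequality with $\alpha$-dependent weights. First I compute the Jacobian: since $X(t_1,t_2,s)=(s^pt_1,s^qt_2,s)$, the matrix $DX$ is upper triangular with diagonal $(s^p,s^q,1)$ and $\det DX=s^{p+q}$. Writing $u=v\circ X$, the chain rule gives $\nabla_{(t,s)}u=(DX)^T(\nabla v)\circ X$, and inverting yields
\[
(\nabla v)\circ X=\bigl(s^{-p}\partial_{t_1}u,\; s^{-q}\partial_{t_2}u,\;\partial_s u-ps^{-1}t_1\partial_{t_1}u-qs^{-1}t_2\partial_{t_2}u\bigr)^T.
\]
Applying the change of variables to $\int_{V_\alpha}|\nabla v|^2\,\dd x$ and expanding the last square yields $s^{p+q}$ times the sum of the three diagonal terms $s^{-2p}(\partial_{t_1}u)^2$, $s^{-2q}(\partial_{t_2}u)^2$, $(\partial_s u)^2$, the two positive squares $p^2s^{-2}t_1^2(\partial_{t_1}u)^2$ and $q^2s^{-2}t_2^2(\partial_{t_2}u)^2$, and three cross terms $-2ps^{-1}t_1\partial_s u\partial_{t_1}u$, $-2qs^{-1}t_2\partial_s u\partial_{t_2}u$, $2pqs^{-2}t_1t_2\partial_{t_1}u\partial_{t_2}u$.

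For the upper bound I apply Young's inequality $2|AB|\le\epsilon A^2+\epsilon^{-1}B^2$ with $\epsilon=\alpha^{1-p}$ to the first mixed term, with $\epsilon=\alpha^{1-q}$ to the second, and with $\epsilon=1$ to the third. Two elementary facts make the coefficients match: on $\omega_\alpha$ one has $t_1^2\le\alpha^{2-2p}$ and $t_2^2\le\alpha^{2-2q}$, and on $(0,a)$ with $a<1$, $p,q>1$ one has $s^{-2}\le s^{-2p}$ and $s^{-2}\le s^{-2q}$. Collecting all contributions reproduces $1+\alpha^{1-p}+\alpha^{1-q}$ in front of $(\partial_s u)^2$ and $1+p^2(\alpha^{1-p}+2\alpha^{2-2p})$ in front of $s^{-2p}(\partial_{t_1}u)^2$ (the coefficient $2$ arises because both the expansion term $p^2s^{-2}t_1^2(\partial_{t_1}u)^2$ and the $t_1t_2$ cross term contribute $p^2\alpha^{2-2p}s^{-2p}(\partial_{t_1}u)^2$), and analogously for $s^{-2q}(\partial_{t_2}u)^2$.

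The lower bound rests on the algebraic observation
\[
p^2s^{-2}t_1^2(\partial_{t_1}u)^2+2pqs^{-2}t_1t_2\partial_{t_1}u\partial_{t_2}u+q^2s^{-2}t_2^2(\partial_{t_2}u)^2=\bigl(ps^{-1}t_1\partial_{t_1}u+qs^{-1}t_2\partial_{t_2}u\bigr)^2\ge 0,
\]
which lets me simply drop these three terms. Bounding the remaining cross terms $-2ps^{-1}t_1\partial_s u\partial_{t_1}u$ and $-2qs^{-1}t_2\partial_s u\partial_{t_2}u$ from below with the same Young's weights as above produces exactly the stated coefficients $1-\alpha^{1-p}-\alpha^{1-q}$, $1-p^2\alpha^{1-p}$ and $1-q^2\alpha^{1-q}$. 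The only real subtlety is the bookkeeping of Young's weights so that the four sources of $\alpha$-dependence combine into the precise coefficients in the statement; once the perfect-square identity above is spotted, the lower bound comes out cleanly without any loss from the $t_1t_2$ cross term.
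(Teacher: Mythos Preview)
Your proof is correct and follows essentially the same route as the paper: change variables via $X$, expand $|\nabla v|^2\circ X=\langle\nabla u,G\nabla u\rangle$ into diagonal and cross terms, and control the cross terms by Young's inequality together with $|t_k|\le\alpha^{1-k}$ and $s^{-2}\le s^{-2k}$. Your explicit perfect-square identity for the lower bound is a slightly cleaner way of organizing the same cancellation the paper relies on (namely that the diagonal contributions $p^2s^{-2}t_1^2(\partial_{t_1}u)^2+q^2s^{-2}t_2^2(\partial_{t_2}u)^2$ exactly absorb the Young bound on the $t_1t_2$ cross term), which is why no $\alpha^{2-2p}$ or $\alpha^{2-2q}$ correction appears in the lower estimate.
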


\begin{proof}
A standard computation shows that for any $u\in D_0(r_\alpha)$ and $v:=\U^{-1} u$ there holds
\begin{equation}
	 \label{eq-form1a}
\int_{V_\alpha} |\nabla v|^2\dd x=\int_0^a \int_{\omega_\alpha}  \langle \nabla u, G\,  \nabla u \rangle_{\R^{3}}\,  s^{p+q}  \dd t \dd s
\end{equation}
where $G$ is the matrix given by
\[
G= (D X^T \, DX)^{-1}
\]
with $DX$ being the Jacobi matrix of $X$.
One checks directly that
\begin{equation*} 
G(t_1,t_2,s)= \left(\, 
 \begin{matrix}
s^{-2p}+p^2 s^{-2}t_1^2&  qps^{-2}t_1 t_2& -ps^{-1}t_1\,    \\[\medskipamount]
qps^{-2}t_1 t_2& s^{-2q}+q^2 s^{-2}t_2^2& -qs^{-1}t_2\\
-ps^{-1}t_1& -qs^{-1}t_2& 1
\end{matrix}\, \right)\,.
\end{equation*}
We would like to estimate the term $\langle \nabla u,G\,  \nabla u \rangle_{\R^{3}}$ from above and from below using simpler expressions. One obtains
\begin{equation}
\begin{aligned}
	\langle \nabla u, G\,  &\nabla u \rangle_{\R^{3}}
	=(s^{-2p}+p^2 t_1^2 s^{-2})|\partial_{t_1}\!u|^2+2pq s^{-2} t_1 t_2\, \partial_{t_1}\!u\, \partial_{t_2}\!u \\		&+\!(s^{-2q}\!+q^2 t_2^2\, s^{-2})|\partial_{t_2}\!u|^2\!-2p s^{-1} t_1 \partial_{t_1}\!u\, \partial_{s}u\!-2q s^{-1} t_2\, \partial_{t_2}\!u\, \partial_{s}u+|\partial_s u|^2.
\label{scalar prod}
\end{aligned}
\end{equation}
Using the standard inequalities $2|xy|\le x^2+y^2,\, |t_1|\le\alpha^{1-p},\, |t_2|\le\alpha^{1-q}$ and $0<s<a<1$ we estimate
\begin{align*}
	|2pq s^{-2} t_1 t_2 \,\partial_{t_1}\!u\, \partial_{t_2}\!u|&\leq \alpha^{2-2p}p^2 s^{-2p}|\partial_{t_1}\!u|^2+\alpha^{2-2q}q^2 s^{-2q} |\partial_{t_2}\!u|^2,
	\\
	|2p s^{-1} t_1 \partial_{t_1}\!u\, \partial_{s}u|&\leq \alpha^{1-p} (p^2 s^{-2p} |\partial_{t_1}\!u|^2 + |\partial_{s}u|^2),
	\\
	|2q s^{-1} t_2\, \partial_{t_2}\!u\, \partial_{s}u|&\leq \alpha^{1-q} (q^2 s^{-2q} |\partial_{t_2}\!u|^2 + |\partial_{s}u|^2).
\end{align*}
The substitution into \eqref{scalar prod} gives a two-sided estimate for $\langle \nabla u, G\,  \nabla u\rangle_{\R^{3}}$, and the substitution into \eqref{eq-form1a} gives the claim.
\end{proof}

Also remark that for the boundary we have

\begin{equation*}
\begin{gathered}
\int_{\partial_0 V_\alpha}\!|v|^2\! \dd\sigma \!=\!\int_0^a\!\bigg[ s^q \sqrt{p^2 \alpha^{2-2p} s^{2p-2}\!+\!1}\int_{-\alpha^{1-q}}^{\alpha^{1-q}}\!|u(\alpha^{1-p},t_2,s)|^2\!+\!|u(-\alpha^{1-p},t_2,s)|^2\dd t_2 
\\
+ s^p \sqrt{q^2 \alpha^{2-2q} s^{2q-2}+1}\int_{-\alpha^{1-p}}^{\alpha^{1-p}}|u(t_1,\alpha^{1-q},s)|^2+|u(t_1,-\alpha^{1-q},s)|^2\dd t_1\bigg]\dd s,
\end{gathered}
\end{equation*}
\begin{equation}\label{bdry}
\begin{gathered}
s^q\le s^q \sqrt{p^2\alpha^{2-2p}s^{2p-2}+1}\le s^q\sqrt{p^2\alpha^{2-2p}+1},
\\
s^p\le s^p \sqrt{q^2\alpha^{2-2q}s^{2q-2}+1}\le s^p \sqrt{q^2\alpha^{2-2q}+1},
\end{gathered}
\end{equation}
again for $u\in D_0(r_\alpha)$ and $v:=\U^{-1} u$.
By applying Lemma \ref{lem5} and \eqref{bdry} to both summands of $t_\alpha$ in \eqref{teps1}
and by adjusting various constants
we obtain the following two-sided estimate
written in a form adapted for the subsequent analysis: There exists $c>0$ (this value will remain fixed through the whole text) and $\alpha_0>1$ such that for any $u\in D_0(r_\alpha)$ and all $\alpha>\alpha_0$ there holds
\begin{gather*}
r_\alpha^-(u,u)\le r_\alpha(u,u)\le r_\alpha^+(u,u),\\
\begin{aligned}
r_\alpha^-(u,u):=(1- &c\alpha^{1-p})\int_0^a\bigg[ \int_{\omega_\alpha}\left( \dfrac{|\partial_{t_1}\! u|^2}{s^{2p}} + \dfrac{|\partial_{t_2}\! u|^2}{s^{2q}}+ |\partial_s u|^2\right) s^{p+q}\dd t_1 \dd t_2
\\ 
 &- \frac{s^q}{1- c\alpha^{1-p}}\int_{-\alpha^{1-q}}^{\alpha^{1-q}}|u(\alpha^{1-p},t_2,s)|^2+|u(-\alpha^{1-p},t_2,s)|^2\dd t_2
\\ 
 &-  \frac{s^p}{1- c\alpha^{1-p}}\int_{-\alpha^{1-p}}^{\alpha^{1-p}}|u(t_1,\alpha^{1-q},s)|^2+|u(t_1,-\alpha^{1-q},s)|^2\dd t_1\bigg] \dd s,
\end{aligned}
\\
 \begin{aligned}
r_\alpha^+(u,u):=(1+ &c\alpha^{1-p})\int_0^a \bigg[\int_{\omega_\alpha}\left( \dfrac{|\partial_{t_1}\! u|^2}{s^{2p}} + \dfrac{|\partial_{t_2}\! u|^2}{s^{2q}}+|\partial_s u|^2\right) s^{p+q}\dd t_1 \dd t_2
\\ 
 &-\frac{s^q}{1+ c\alpha^{1-p}} \int_{-\alpha^{1-q}}^{\alpha^{1-q}}|u(\alpha^{1-p},t_2,s)|^2+|u(-\alpha^{1-p},t_2,s)|^2\dd t_2
\\ 
 &-\frac{s^p}{1+ c\alpha^{1-p}}  \int_{-\alpha^{1-p}}^{\alpha^{1-p}}|u(t_1,\alpha^{1-q},s)|^2+|u(t_1,-\alpha^{1-q},s)|^2\dd t_1\bigg] \dd s.
\end{aligned}
\end{gather*}
Remark that $c>0$ is independent of the choice of $0<a<\min(\delta,1)$. With the help of the unitary transform
\[
\V: L^2(\Pi_\alpha)\to L^2(\Pi_\alpha, s^{p+q} \dd s\, \dd t), \quad
(\V\, u)(t,s) = s^{-\frac{p+q}{2}}\,  u(t,s),
\]
we define the following quadratic forms, which we will work with most of the time.
\begin{align*}
p_\alpha^-(u,u):=&r_\alpha^-(\V^{-1}u,\V^{-1}u),
\\
p_\alpha^+(u,u):=&r_\alpha^+(\V^{-1}u,\V^{-1}u),
\end{align*}
with $u\in D_0(p_\alpha^-):=\V^{-1}D_0(r_\alpha)=D_0(r_\alpha)=:D_0(p_\alpha^+)$. Using elementary calculations the following Proposition follows.

\begin{prop}\label{prop6}
For all $u\in D_0(p_\alpha^\pm)$ there holds
\begin{align*}
p_\alpha^-(u,u)=&(1- c\alpha^{1-p})\int_0^a\bigg[ \int_{\omega_\alpha} \dfrac{|\partial_{t_1}\! u|^2}{s^{2p}}
 + \dfrac{|\partial_{t_2}\! u|^2}{s^{2q}}
\\
&+ |\partial_s u|^2+\dfrac{(p+q)^2-2(p+q)}{4s^2}|u|^2\dd t_1 \dd t_2
\\
&- \dfrac{1}{(1- c\alpha^{1-p})s^p}\int_{-\alpha^{1-q}}^{\alpha^{1-q}}|u(\alpha^{1-p},t_2,s)|^2+|u(-\alpha^{1-p},t_2,s)|^2\dd t_2
\\
&- \dfrac{1}{(1- c\alpha^{1-p})s^q} \int_{-\alpha^{1-p}}^{\alpha^{1-p}}|u(t_1,\alpha^{1-q},s)|^2+|u(t_1,-\alpha^{1-q},s)|^2\dd t_1 \bigg]\dd s,
\\
p_\alpha^+(u,u)=&(1+ c\alpha^{1-p})\int_0^a\bigg[ \int_{\omega_\alpha} \dfrac{|\partial_{t_1}\! u|^2}{s^{2p}}+ \dfrac{|\partial_{t_2}\! u|^2}{s^{2q}}
\\
&+ |\partial_s u|^2+\dfrac{(p+q)^2-2(p+q)}{4s^2} |u|^2\dd t_1 \dd t_2
\\
&- \dfrac{1}{(1+ c\alpha^{1-p})s^p}\int_{-\alpha^{1-q}}^{\alpha^{1-q}}|u(\alpha^{1-p},t_2,s)|^2+|u(-\alpha^{1-p},t_2,s)|^2\dd t_2
\\
&- \dfrac{1}{(1+ c\alpha^{1-p})s^q} \int_{-\alpha^{1-p}}^{\alpha^{1-p}}|u(t_1,\alpha^{1-q},s)|^2+|u(t_1,-\alpha^{1-q},s)|^2\dd t_1 \bigg]\dd s.
\end{align*}
In particular, by \eqref{ej-vp2a} it follows that
for each $j\in\NN$ there holds
\[
\inf_{\substack{{S\subset D_0(p_\alpha^-)} \\ {\rm dim\,}  S =j} } \, \sup_{\substack{u\in S\\ u\neq 0}}\,  \frac{p_\alpha^-(u,u)}{\quad \|u\|_{L^2(\Pi_\alpha)}^2}\le	E_j(T_\alpha) \le \inf_{\substack{S\subset D_0(p_\alpha^+) \\ {\rm dim\,}  S =j }} \, \sup_{\substack{u\in S\\ u\neq 0}}\,  \frac{p_\alpha^+(u,u)}{\quad \|u\|_{L^2(\Pi_\alpha)}^2}.
\]
\end{prop}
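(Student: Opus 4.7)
The identity for $p_\alpha^{\pm}$ is a direct substitution: plug $v:=\V^{-1}u$ into $r_\alpha^{\pm}(v,v)$, rearrange, and use one integration by parts in $s$. Because $\V^{-1}$ is a multiplication operator depending only on $s$, the two transverse terms and the two boundary integrals are purely algebraic: the $s$-power produced by $|\partial_{t_i}v|^2$ cancels against the measure weight $s^{p+q}$ to leave exactly $s^{-2p}|\partial_{t_1}u|^2$ and $s^{-2q}|\partial_{t_2}u|^2$ integrated against ordinary Lebesgue measure on $\Pi_\alpha$, and the same cancellation on the lateral faces of $\partial_0V_\alpha$ converts the prefactors $s^q$ and $s^p$ of the boundary terms in $r_\alpha^{\pm}$ into the $s^{-p}$ and $s^{-q}$ appearing in $p_\alpha^{\pm}$.

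The only term that requires more than bookkeeping is $\int |\partial_s v|^2 s^{p+q}\dd t\,\dd s$. Expanding $\partial_s v$ by the product rule and multiplying $|\partial_s v|^2$ by $s^{p+q}$ yields three contributions per $s$-slice: the clean $|\partial_s u|^2$, a pure Hardy piece $\frac{(p+q)^2}{4s^2}u^2$, and a cross term which can be rewritten as $-\frac{p+q}{2s}\partial_s(u^2)$. I would then integrate the cross term by parts in $s$ on $(0,a)$: both boundary contributions vanish because every $u\in D_0(p_\alpha^{\pm})=D_0(r_\alpha)$ is, by definition, zero for $s$ outside some compact subinterval of $(0,a)$. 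What remains from the cross term is $-\frac{p+q}{2s^2}u^2$, which adds to the pure Hardy piece to produce precisely the potential $\frac{(p+q)^2-2(p+q)}{4s^2}u^2$ that appears in $p_\alpha^{\pm}$.

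With the two explicit formulae in hand, the min-max sandwich is immediate. Applying the two-sided bound $r_\alpha^{-}\le r_\alpha\le r_\alpha^{+}$ proved right before the proposition to $v=\V^{-1}u$ gives $p_\alpha^{-}(u,u)\le r_\alpha(\V^{-1}u,\V^{-1}u)\le p_\alpha^{+}(u,u)$ for every admissible $u$. Unitarity of $\V$ identifies $\|\V^{-1}u\|_{L^2(\Pi_\alpha,s^{p+q}\dd s\,\dd t)}$ with $\|u\|_{L^2(\Pi_\alpha)}$, and $\V^{-1}$ puts the subspaces $D_0(r_\alpha)$ and $D_0(p_\alpha^{\pm})$ in bijection; inserting these two facts into the variational formula \eqref{ej-vp2a} yields the stated inequality. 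No genuine obstacle appears; the one point to verify with care is the integration by parts producing the Hardy term, and that is immediate from the explicit compact-support condition built into the definition of $D_0(r_\alpha)$.
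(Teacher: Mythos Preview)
Your proposal is correct and matches the paper's approach exactly: the paper simply states that the proposition follows from ``elementary calculations,'' and you have accurately identified those calculations---the algebraic cancellation of the weight $s^{p+q}$ in the transverse and boundary terms, and the single integration by parts in $s$ (legitimate by the compact-support condition in $D_0(r_\alpha)$) that turns the cross term from $|\partial_s v|^2$ into the Hardy potential $\frac{(p+q)^2-2(p+q)}{4s^2}$. One small caution: the cancellations you describe require the substitution $v=\V u=s^{-(p+q)/2}u$ rather than $v=\V^{-1}u$, so be sure your written version uses $\V$ consistently when you carry out the algebra.
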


\subsection{Upper bound for the eigenvalues of $T_\alpha$} \label{sec-upperb}
We are going to compare the eigenvalues of $T_\alpha$ with those of the one-dimensional operators $A_{c_1,c_2}$.

\begin{lemma}\label{lem6}
Let $j\in\N$ then there exist $C>0$ and $\alpha_0\ge1$ such that for any $\alpha>\alpha_0$ there holds
\begin{equation*}
E_j(T_\alpha)\le (1+ c\alpha^{1-p})E_j\left(M_{\frac{\alpha^{p-1}}{1+ c\alpha^{1-p}},\frac{\alpha^{q-1}}{1+c\alpha^{1-p}},a}\right)+C.
\end{equation*}
\end{lemma}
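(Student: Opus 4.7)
The plan is to apply the min-max of Proposition \ref{prop6} with separable test functions
$u_f(t_1,t_2,s):=f(s)\,\Phi^{(1)}_s(t_1)\,\Phi^{(2)}_s(t_2)$,
where $f\in C_c^\infty(0,a)$ and $\Phi^{(1)}_s:=\Phi_{1,\alpha^{1-p},\rho_1(s)}$, $\Phi^{(2)}_s:=\Phi_{1,\alpha^{1-q},\rho_2(s)}$ are the smooth families of normalized Robin ground-state eigenfunctions supplied by Lemma \ref{lem-1}(d) for the $s$-dependent parameters $\rho_1(s):=s^p/(1+c\alpha^{1-p})$ and $\rho_2(s):=s^q/(1+c\alpha^{1-p})$. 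Such $u_f$ lie in $D_0(p_\alpha^+)$ by the compact support of $f$ and the smooth dependence of $\Phi^{(i)}_s$ on all variables, and $L^2$-normalization of $\Phi^{(i)}_s$ in the respective transverse intervals gives $\|u_f\|_{L^2(\Pi_\alpha)}=\|f\|_{L^2(0,a)}$.

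At each fixed $s$, the transverse block in $(t_1,t_2)$ of $p_\alpha^+(u_f,u_f)$ collapses, via the Robin form $b_{L,r}$, to $f(s)^2\bigl[s^{-2p}E_1(B_{\alpha^{1-p},\rho_1(s)})+s^{-2q}E_1(B_{\alpha^{1-q},\rho_2(s)})\bigr]$. Lemma \ref{lem-1}(b) expands these two terms as $-c_1/s^p+O(1)$ and $-c_2/s^q+O(1)$, with $c_1:=\alpha^{p-1}/(1+c\alpha^{1-p})$, $c_2:=\alpha^{q-1}/(1+c\alpha^{1-p})$ matching the lemma's statement exactly and the remainders uniformly bounded in $\alpha\ge 1$ and $s\in(0,a]$ thanks to $\phi\in L^\infty(\R_+)$.

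For the $|\partial_s u_f|^2$ part, expanding by the product rule and integrating in $(t_1,t_2)$ reduces it to $(f')^2+f^2\int_{\omega_\alpha}|\partial_s(\Phi^{(1)}_s\Phi^{(2)}_s)|^2\,\dd t$: the cross term vanishes because $\partial_s\int_{\omega_\alpha}(\Phi^{(1)}_s\Phi^{(2)}_s)^2\,\dd t=\partial_s(1)=0$. Lemma \ref{lem2}, applied with $L=\alpha^{1-p}\le 1$, $\tilde L=\alpha^{1-q}\le 1$ and the $C^1$ profiles $\rho_1,\rho_2$ extended to $[0,a]$ (whose derivatives are bounded since $p,q>1$), then bounds the remaining integral by a constant independent of $\alpha$. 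Combining all three pieces yields
$p_\alpha^+(u_f,u_f)\le(1+c\alpha^{1-p})\bigl[m_{c_1,c_2,a}(f,f)+\tilde C\|f\|_{L^2(0,a)}^2\bigr]$,
where $m_{c_1,c_2,a}$ denotes the quadratic form of $M_{c_1,c_2,a}$ and $\tilde C$ is uniform in $\alpha\ge 1$.

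Finally, I would select a $j$-dimensional $V\subset C_c^\infty(0,a)$ whose $m_{c_1,c_2,a}$-Rayleigh quotients approximate $E_j(M_{c_1,c_2,a})$ arbitrarily well ($C_c^\infty(0,a)$ being a form core for the Friedrichs extension $M_{c_1,c_2,a}$), feed $\{u_f:f\in V\}$ into Proposition \ref{prop6}, pass to the infimum, and absorb $(1+c\alpha^{1-p})\tilde C\le(1+c)\tilde C$ into the additive constant $C$. The main obstacle is ensuring uniform-in-$\alpha$ control of all error terms: the $O(1)$ remainder in Lemma \ref{lem-1}(b) is uniform only because $\phi$ is globally bounded, and the $s$-derivatives of the transverse eigenfunctions could a priori blow up as $L,\tilde L\to 0$, which is averted precisely by the $L^2+\tilde L^2$ shape of the estimate in Lemma \ref{lem2} combined with $\alpha^{1-p},\alpha^{1-q}\le 1$ for $\alpha\ge 1$.
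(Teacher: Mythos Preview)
Your proposal is correct and follows essentially the same route as the paper: the same separable test functions $u_f(t,s)=f(s)\Phi^{(1)}_s(t_1)\Phi^{(2)}_s(t_2)$, the same reduction of the transverse block to $E_1(B_{\alpha^{1-p},\rho_1(s)})/s^{2p}+E_1(B_{\alpha^{1-q},\rho_2(s)})/s^{2q}$, the same orthogonality argument $\partial_s\int_{\omega_\alpha}|\Phi|^2\,\dd t=0$ to kill the cross term in $|\partial_s u_f|^2$, the same appeal to Lemma~\ref{lem-1}(b) and Lemma~\ref{lem2}, and the same final min-max step over $C_c^\infty(0,a)$. Your observation that the $\alpha$-dependence of $\rho_1,\rho_2$ through the factor $(1+c\alpha^{1-p})^{-1}\le 1$ leaves the sup-norms of $\rho_i,\rho_i'$ uniformly bounded (so that the constants from Lemmas~\ref{lem-1}(d) and~\ref{lem2} are indeed uniform in $\alpha\ge 1$) is exactly the point the paper uses implicitly.
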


\begin{proof}
By Proposition \ref{prop6} we have
\begin{equation*}
E_j(T_\alpha)\le \inf_{\substack{S\subset D_0(p^+_\alpha) \\ {\rm dim\,}  S =j }} \, \sup_{\substack{u\in S\\ u\neq 0}}\,  \frac{p_\alpha^+(u,u)}{\quad \|u\|_{L^2(\Pi_\alpha)}^2}.
\end{equation*}
We want to have a more convenient expression for $p_\alpha^+$ to compare it to an one dimensional Robin-Laplacian. There holds
\begin{align*}
p_\alpha^+&(u,u)=(1+c\alpha^{1-p})\int_0^a \bigg[\int_{-\alpha^{1-q}}^{\alpha^{1-q}}\dfrac{1}{s^{2p}}
\\
&\cdot\left\{\int_{-\alpha^{1-p}}^{\alpha^{1-p}}|\partial_{t_1} u|^2 \dd t_1 - \dfrac{s^p}{1+c\alpha^{1-p}}\left(|u(\alpha^{1-p}, t_2,s)|^2+|u(-\alpha^{1-p},t_2,s)|^2\right)\right\}\dd t_2
\\
&+\int_{-\alpha^{1-p}}^{\alpha^{1-p}}\dfrac{1}{s^{2q}}
\\
&\cdot\left\{\int_{-\alpha^{1-q}}^{\alpha^{1-q}}|\partial_{t_2} u|^2 \dd t_2 -\dfrac{s^q}{1+c\alpha^{1-p}}\left(|u(t_1,\alpha^{1-q},s)|^2+|u(t_1,-\alpha^{1-q},s)|^2\right)\right\}\dd t_1
\\
&+\int_{\omega_{\alpha}}|\partial_s u|^2+\frac{(p+q)^2-2(p+q)}{4s^2}|u|^2\dd t\bigg]\dd s.
\end{align*}

Note that the functionals in the curly brackets are the quadratic forms $b_{\alpha^{1-k},\rho_k(s)}$ as defined in Subsection \ref{sec-1d} with 
\[
\rho_k:s \mapsto\dfrac{s^k}{1+c\alpha^{1-p}}\in C^1([0,a])\cap C^\infty(0,a),\quad k\in\{p,q\}.
\]
Let $k\in\{p,q\}$ and $(\Phi_{1,k,\rho_k(s)})_{s\in(0,a)}$ be a smooth family of normalized eigenfunctions corresponding to $(E_1(B_{\alpha^{1-k},\rho_{k}(s)}))_{s\in(0,a)}$ (see Lemma \ref{lem-1} $(d)$).
Moreover, if $S \subset C_c^\infty(0,a)$ is a $j$-dimensional subspace, then
\begin{align*} 
\Tilde S = \big\{ u\in H^1(\Pi_\alpha):u(t,s)= f(s)\,\Phi_{1,p,\rho_p(s)}(t_1)\Phi_{1,q,\rho_q(s)}(t_2)\text{ with }f\in S\big\} 
\end{align*} 
is a $j$-dimensional subspace of $D_0(p_\alpha^+)$. For any $u\in \Tilde S$  one has $\|u\|^2_{L^2(\Pi_\alpha)} = \|f\|^2_{L^2(0,a)}$ and 
\begin{align*}
\int_{-\alpha^{1-q}}^{\alpha^{1-q}}&\int_{-\alpha^{1-p}}^{\alpha^{1-p}}|\partial_{t_1} u|^2 \dd t_1 - \rho_p(s)\left(|u(\alpha^{1-p}, t_2,s)|^2+|u(-\alpha^{1-p},t_2,s)|^2\right)\dd t_2 
\\
&= E_1(B_{\alpha^{1-p},\rho_p(s)})|f(s)|^2,
\\
\int_{-\alpha^{1-p}}^{\alpha^{1-p}}&\int_{-\alpha^{1-q}}^{\alpha^{1-q}}|\partial_{t_2} u|^2 \dd t_2 - \rho_q(s)\left(|u(t_1,\alpha^{1-q},s)|^2+|u(t_1,-\alpha^{1-q},s)|^2\right)\dd t_1 
\\
&= E_1(B_{\alpha^{1-q},\rho_q(s)})|f(s)|^2.
\end{align*}
For $u\in\widetilde S$ this simplifies the expression of the above quadratic form as follows
\begin{align*}
 p_\alpha^+(u,u)=(1+c\alpha^{1-p})\int_0^a&\bigg( \left[\dfrac{E_1(B_{\alpha^{1-p},\rho_{p}(s)})}{s^{2p}}+\dfrac{E_1(B_{\alpha^{1-q},\rho_{q}(s)})}{s^{2q}}\right]\, |f(s)|^2 
\\
&+\int_{\omega_{\alpha}}|\partial_s u|^2+\frac{(p+q)^2-2(p+q)}{4s^2}|u|^2\dd t\bigg)\dd s.
\end{align*}
For the sake of brevity we use the notation 
\begin{align*}
\Phi(t_1,t_2,s) &:= \Phi_{1,p,\rho_p(s)}(t_1)\Phi_{1,q,\rho_q(s)}(t_2).
\end{align*}
Due to the normalization of $\Phi$ we have
\[
\int_{\omega_\alpha} 2\Phi\, \partial_s\Phi \dd t=\partial_s \int_{\omega_\alpha} |\Phi|^2\dd t=\partial_s 1=0.
\]
The preceding orthogonality relation and Lemma \ref{lem2} show
\begin{align*}
\int_{\omega_\alpha}  |\partial_s u|^2\dd t&=\int_{\omega_\alpha} |f'|^2|\Phi|^2 +2f'f\Phi\partial_s\Phi+|f|^2|\partial_s \Phi|^2\dd t
\\
&=|f'|^2+\int_{\omega_\alpha}|f|^2|\partial_s\Phi|^2\dd t\le |f'|^2+ C (\alpha^{2-2p}+\alpha^{2-2q})|f|^2
\end{align*}
for appropriate $C>0$.
Using Lemma \ref{lem-1} and taking $\alpha$ sufficiently large, we arrive at
\begin{align*} 
p_\alpha^+(u,u)\!\leq\!(1\!+\!c\alpha^{1-p})\!\int_{0}^a& \!\bigg(\!\left[\dfrac{-\rho_p(s)\alpha^{p-1}+C_1|\rho_p(s)|^2}{s^{2p}}+\dfrac{-\rho_q(s)\alpha^{q-1}+C_1|\rho_q(s)|^2}{s^{2q}}\right]|f|^2
\\
&+\!|f'|^2\!+\!\frac{(p+q)^2-2(p+q)}{4s^2}|f|^2+C (\alpha^{2-2p}+\alpha^{2-2q})|f|^2\bigg)\dd s
\\
\le (1+ c\alpha^{1-p})\int_{0}^a&\bigg(|f'|^2+\bigg[ \frac{(p+q)^2-2(p+q)}{4s^2}-\dfrac{\alpha^{p-1}}{(1+ c\alpha^{1-p})s^{p}}
\\
&-\dfrac{\alpha^{q-1}}{(1+ c\alpha^{1-p})s^{q}}\bigg]|f|^2\bigg)\dd s+C\| u\|^2_{L^2(\Pi_\alpha)}
\end{align*}
for appropriate $C$ and $C_1$.  One recognizes the quadratic form for $M_{\frac{\alpha^{p-1}}{1+ c\alpha^{1-p}},\frac{\alpha^{q-1}}{1+ c\alpha^{1-p}},a}$. Also remark that the constants we have chosen, in this proof, to estimate terms are independent of the subspace $S$. As a result we have
\begin{align*}
E_j(T_\alpha)&\le \inf_{\substack{S\subset D_0(p^+_\alpha) \\ {\rm dim\, }S =j }} \, \sup_{\substack{u\in S\\ u\neq 0}}\,  \frac{p_\alpha^+(u,u)}{\quad \|u\|_{L^2(\Pi_\alpha)}^2}\leq \inf_{\substack{S\subset C_c^\infty(0,a)\\ {\rm dim\, }S =j }} \, \sup_{\substack{u\in \tilde S\\ u\neq 0}}\,  \frac{p_\alpha^+(u,u)}{\quad \|u\|_{L^2(\Pi_\alpha)}^2}
\\
&\le(1+ c\alpha^{1-p})\inf_{\substack{S\subset C_c^\infty(0,a) \\ {\rm dim\, }S =j }} \, \sup_{\substack{f\in S\\ f\neq 0}}\dfrac{\langle f, M_{\frac{\alpha^{p-1}}{1+ c\alpha^{1-p}},\frac{\alpha^{q-1}}{1+ c\alpha^{1-p}},a}f\rangle}{\| f\|_{L^2(0,a)}^2}+C.
\end{align*}
This implies
\begin{align*}
E_j(T_\alpha)&\le (1+ c\alpha^{1-p})E_j\left(M_{\frac{\alpha^{p-1}}{1+ c\alpha^{1-p}},\frac{\alpha^{q-1}}{1+c\alpha^{1-p}},a}\right)+C
\end{align*}
and concludes the proof.

\end{proof}

\begin{prop}\label{prop7}
For any $j\in\N$ there exists $\alpha_0,C>0$ such that 
\begin{equation*}
E_j(T_\alpha)\le \alpha^{2\frac{q-1}{2-q}}E_j(A_{0,1})+C\alpha^{2\frac{q-1}{2-q}-(p-1)}
\end{equation*}
for all $\alpha>\alpha_0$.
\end{prop}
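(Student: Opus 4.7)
The plan is to chain Lemma~\ref{lem6}, the IMS bound of Lemma~\ref{modelop1}, monotonicity of $A_{c_1,c_2}$ in its first argument, and the scaling identity~\eqref{uemodel}, finishing with a Taylor expansion in the small parameter $\alpha^{1-p}$. Introduce the shorthand $\beta_p:=\alpha^{p-1}/(1+c\alpha^{1-p})\le\alpha^{p-1}$ and $\beta_q:=\alpha^{q-1}/(1+c\alpha^{1-p})\le\alpha^{q-1}$, so that Lemma~\ref{lem6} becomes $E_j(T_\alpha)\le(1+c\alpha^{1-p})E_j(M_{\beta_p,\beta_q,a})+C$.

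Next I would rerun the IMS partition-of-unity argument of Lemma~\ref{modelop1} with coefficients $(\beta_p,\beta_q)$ in place of $(\alpha^{p-1},\alpha^{q-1})$. The only coefficient-dependent ingredients of that proof are the Hardy-type lower bound $\Lambda_1(\widetilde M_{c_1,c_2,a/4})\ge-\frac{4^pc_1}{a^p}-\frac{4^qc_2}{a^q}$ and the dominance of $E_j(A_{c_1,c_2})+K$ over that bound. Both survive with $(c_1,c_2)=(\beta_p,\beta_q)$: the first because $\beta_p\le\alpha^{p-1}$ and $\beta_q\le\alpha^{q-1}$; the second because $E_j(A_{\beta_p,\beta_q})$ will turn out to be of order $\alpha^{2(q-1)/(2-q)}$, far more negative than $-\alpha^{q-1}$ for large $\alpha$. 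Thus $E_j(M_{\beta_p,\beta_q,a})\le E_j(A_{\beta_p,\beta_q})+K$.

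Now I would use the form inequality $A_{\beta_p,\beta_q}\le A_{0,\beta_q}$, which holds because dropping the nonnegative term $\beta_p/s^p$ only raises the potential, and then apply the scaling~\eqref{uemodel} with $c=\beta_q^{1/(2-q)}$ to rescale the remaining coefficient to $1$, obtaining the exact identity $E_j(A_{0,\beta_q})=\beta_q^{2/(2-q)}E_j(A_{0,1})$. The Taylor expansion
\[
\beta_q^{\frac{2}{2-q}}=\alpha^{\frac{2(q-1)}{2-q}}\bigl(1-\tfrac{2c}{2-q}\alpha^{1-p}+O(\alpha^{2-2p})\bigr),
\]
combined with $E_j(A_{0,1})<0$, then yields $E_j(A_{0,\beta_q})=\alpha^{\frac{2(q-1)}{2-q}}E_j(A_{0,1})+O(\alpha^{\frac{2(q-1)}{2-q}-(p-1)})$. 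Reintroducing the prefactor $(1+c\alpha^{1-p})$ and the additive constants $K$ and $C$ contributes further terms of sizes $O(\alpha^{\frac{2(q-1)}{2-q}-(p-1)})$ and $O(1)$; the $O(1)$ is absorbed because $1<p<q<2$ implies $q>2p/(1+p)$, equivalently $\frac{2(q-1)}{2-q}-(p-1)>0$, so $\alpha^{\frac{2(q-1)}{2-q}-(p-1)}\to\infty$.

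I expect the main obstacle to be the transfer of Lemma~\ref{modelop1} to the modified coefficients $(\beta_p,\beta_q)$: this is the one point where the previous proof must be reopened rather than invoked as printed. The key observation making it harmless is that the Hardy-type Neumann bound on $\widetilde M_{c_1,c_2,a/4}$ is monotone in the coefficients and therefore automatic for $\beta_p\le\alpha^{p-1}$, $\beta_q\le\alpha^{q-1}$.
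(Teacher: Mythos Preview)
Your proposal is correct and follows essentially the same route as the paper: Lemma~\ref{lem6}, then Lemma~\ref{modelop1}, then the monotonicity $A_{\beta_p,\beta_q}\le A_{0,\beta_q}$, then the scaling~\eqref{uemodel} with a Taylor expansion in $\alpha^{1-p}$. You are in fact slightly more careful than the paper in flagging that Lemma~\ref{modelop1} is stated for the coefficients $(\alpha^{p-1},\alpha^{q-1})$ rather than $(\beta_p,\beta_q)$ and that its proof must be reopened; the paper simply invokes it.
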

\begin{proof}
By Lemma \ref{lem6}, we know
\begin{equation*}
E_j(T_\alpha)\le (1+ c\alpha^{1-p})E_j\left(M_{\frac{\alpha^{p-1}}{1+ c\alpha^{1-p}},\frac{\alpha^{q-1}}{1+ c\alpha^{1-p}},a}\right)+C
\end{equation*}
and, by Lemma \ref{modelop1}, we get
\begin{equation*}
E_j\left(M_{\frac{\alpha^{p-1}}{1+ c\alpha^{1-p}},\frac{\alpha^{q-1}}{1+ c\alpha^{1-p}},a}\right)\le E_j\left(A_{\frac{\alpha^{p-1}}{1+ c\alpha^{1-p}},\frac{\alpha^{q-1}}{1+ c\alpha^{1-p}}}\right)+K.
\end{equation*}
Applying the min-max principle yields
\begin{equation*}
E_j\left(A_{\frac{\alpha^{p-1}}{1+ c\alpha^{1-p}},\frac{\alpha^{q-1}}{1+ c\alpha^{1-p}}}\right)\le E_j\left(A_{0,\frac{\alpha^{q-1}}{1+ c\alpha^{1-p}}}\right).
\end{equation*}
Making use of the unitary equivalence \eqref{uemodel}, we have
\begin{align*}
E_j\left( A_{0,\frac{\alpha^{q-1}}{1+ c\alpha^{1-p}}}\right)=\alpha^{2\frac{q-1}{2-q}} E_j(A_{0,1})+O\left(\alpha^{2\frac{q-1}{2-q}-(p-1)}\right).
\end{align*}
By adjusting $C>0$ it follows that
\begin{equation*}
E_j(T_\alpha)\le \alpha^{2\frac{q-1}{2-q}}E_j(A_{0,1})+C\alpha^{2\frac{q-1}{2-q}-(p-1)}
\end{equation*}

\end{proof}

\subsection{Lower bound for the eigenvalues of $T_\alpha$} \label{lowbdd}

The lower bound for the eigenvalues of $T_\alpha$ is also obtained using a comparison with the operators $A_{c_1,c_2}$
but requires more work.

\begin{lemma}\label{lem9}
Let $j\in\NN$, then there exist $C>0$ and $\alpha_0>0$ such that 
\[
E_j(T_\alpha)\ge (1-C\alpha^{1-p})E_j\left(M_{\frac{\alpha^{p-1}}{1-C\alpha^{1-p}},\frac{\alpha^{q-1}}{1-C\alpha^{1-p}},a}\right)- C
\]
for all $\alpha>\alpha_0$.
\end{lemma}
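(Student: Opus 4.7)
The plan mirrors Lemma~\ref{lem6} but in the opposite direction: where the upper bound was obtained by restricting to tensor-product test functions $f(s)\Phi_{1,\alpha^{1-p},\rho_p(s)}(t_1)\Phi_{1,\alpha^{1-q},\rho_q(s)}(t_2)$, the lower bound requires projecting an arbitrary $u\in D_0(p_\alpha^-)$ onto such an ansatz and controlling the transverse perpendicular remainder by the large, $s$-dependent spectral gap of the transverse Robin-Laplacians. I start from Proposition~\ref{prop6}, which gives $E_j(T_\alpha)\ge\inf_{\dim S=j}\sup_{u\in S}p_\alpha^-(u,u)/\|u\|_{L^2(\Pi_\alpha)}^2$, and work with the form $p_\alpha^-$ throughout.

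For $u\in D_0(p_\alpha^-)$ and $s\in(0,a)$ I write $u(t,s)=f(s)\Phi^{(0)}(t;s)+v(t,s)$, where $\Phi^{(0)}(t;s):=\Phi_{1,\alpha^{1-p},\rho_p(s)}(t_1)\Phi_{1,\alpha^{1-q},\rho_q(s)}(t_2)$ with $\rho_k(s):=s^k/(1-c\alpha^{1-p})$, $f(s):=\langle u(\cdot,s),\Phi^{(0)}(\cdot;s)\rangle_{L^2(\omega_\alpha)}$, and $v(\cdot,s)\perp\Phi^{(0)}(\cdot;s)$ in $L^2(\omega_\alpha)$. Applying the spectral theorem to the separable transverse operator $s^{-2p}B_{\alpha^{1-p},\rho_p(s)}\oplus s^{-2q}B_{\alpha^{1-q},\rho_q(s)}$ together with Lemma~\ref{lem-1}(b), the transverse-plus-Hardy part of $p_\alpha^-(u,u)$ is bounded below at each $s$ by
\begin{equation*}
[V_M(s)+\cO(1)]\,|f(s)|^2+[V_M(s)+\cO(1)+\gamma(s)]\,\|v(\cdot,s)\|_{L^2(\omega_\alpha)}^2,
\end{equation*}
where $V_M$ is the potential of $M_{\frac{\alpha^{p-1}}{1-c\alpha^{1-p}},\frac{\alpha^{q-1}}{1-c\alpha^{1-p}},a}$ and $\gamma(s)\ge C\alpha^{2(p-1)}/s^{2p}$ is the transverse spectral gap supplied by Lemma~\ref{lem-1}(c). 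Importantly, $\gamma$ is largest exactly where $V_M$ is most negative (small $s$), so $V_M+\gamma$ is uniformly much better behaved than $V_M$ alone.

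For $\int_{\omega_\alpha}|\partial_s u|^2\,\rd t$ I exploit the normalization identities $\int_{\omega_\alpha}\Phi^{(0)}\partial_s\Phi^{(0)}\,\rd t=\tfrac12\partial_s\|\Phi^{(0)}\|^2=0$ and $\int_{\omega_\alpha}\Phi^{(0)} v\,\rd t=0$. Expanding $\partial_s u=f'\Phi^{(0)}+f\partial_s\Phi^{(0)}+\partial_s v$, integrating, using the identities above (and $\int\Phi^{(0)}\partial_s v\,\rd t=-\int v\partial_s\Phi^{(0)}\,\rd t$ obtained by differentiating $\int\Phi^{(0)}v\,\rd t=0$), then applying Young's inequality with parameter $\eps\in(0,1)$ yields
\begin{equation*}
\int_{\omega_\alpha}|\partial_s u|^2\,\rd t\ge(1-\eps)|f'(s)|^2-\eps^{-1}\|\partial_s\Phi^{(0)}(\cdot;s)\|_{L^2(\omega_\alpha)}^2\|v(\cdot,s)\|_{L^2(\omega_\alpha)}^2.
\end{equation*}
By Lemma~\ref{lem2}, $\|\partial_s\Phi^{(0)}\|_{L^2(\omega_\alpha)}^2\le C_0(\alpha^{2-2p}+\alpha^{2-2q})=\cO(\alpha^{2-2p})$, so choosing $\eps$ of order $\alpha^{1-p}$ renders the cross-error of size $\cO(\alpha^{1-p})\|v(\cdot,s)\|^2$, comfortably absorbed by $\gamma(s)\|v(\cdot,s)\|^2$.

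Assembling yields $p_\alpha^-(u,u)\ge(1-c\alpha^{1-p})\big[(1-\eps)\|f'\|_{L^2(0,a)}^2+q_M(f)\big]+R(v)-C\|u\|_{L^2(\Pi_\alpha)}^2$, where $q_M$ is the quadratic form of $M_{\frac{\alpha^{p-1}}{1-c\alpha^{1-p}},\frac{\alpha^{q-1}}{1-c\alpha^{1-p}},a}$ and $R(v)\ge 0$ for large $\alpha$. Given a $j$-dim subspace $S\subset D_0(p_\alpha^-)$, consider the projection $\Pi\colon S\to H_0^1((0,a))$, $u\mapsto f_u$. If $\Pi$ is injective, $\Pi(S)$ is $j$-dimensional and min-max for $M$ supplies $u\in S$ with $q_M(f_u)\ge E_j(M)\|f_u\|_{L^2(0,a)}^2\ge E_j(M)\|u\|_{L^2(\Pi_\alpha)}^2$ (using $E_j(M)<0$ and $\|f_u\|\le\|u\|$). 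If $\Pi$ has nontrivial kernel, any nonzero $u=v\in\ker\Pi$ satisfies $p_\alpha^-(u,u)/\|u\|^2\ge E_1(-\partial_s^2+V_M+\gamma+\cO(1))$, which exceeds $E_j(M)$ for large $\alpha$ because $\gamma$ concentrates precisely at the well of $V_M$, so $V_M+\gamma$ attains its minimum at a strictly smaller $\alpha$-order than $|E_j(M)|\sim\alpha^{2(q-1)/(2-q)}$. Either way $\sup_{u\in S}p_\alpha^-(u,u)/\|u\|^2\ge(1-c\alpha^{1-p})E_j(M)-C$; taking the infimum over $S$ and relabeling $c$ as $C$ gives the claim. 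The main obstacle is the cross-derivative term stemming from the $s$-dependence of $\Phi^{(0)}$, overcome by combining Lemma~\ref{lem2} with the normalization-induced orthogonalities and the gap from Lemma~\ref{lem-1}(c); the secondary obstacle is the non-injective case of $\Pi$, resolved by the fact that the gap $\gamma$ is largest where $V_M$ is deepest.
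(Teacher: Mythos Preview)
Your decomposition $u=f\Phi^{(0)}+v$, your use of the transverse spectral gap from Lemma~\ref{lem-1}(c), and your control of $\partial_s\Phi^{(0)}$ via Lemma~\ref{lem2} all match the paper's argument. The difference lies only in the final min-max step: the paper packages the $f$- and $v$-parts into a direct sum $M\oplus G_{\alpha,a}$, where $G_{\alpha,a}$ is multiplication by the potential you call $V_M+\gamma$, and then shows $\Lambda_j(M\oplus G_{\alpha,a})=E_j(M)$ by proving $\inf\spec G_{\alpha,a}>E_j(M)$. Your projection/case-analysis on $\Pi$ is a legitimate alternative, but as written it contains an error.

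The claim ``$R(v)\ge 0$ for large $\alpha$'' is false in general. At $s$ bounded away from $0$ the gap term is $\gamma(s)\sim \alpha^{2p-2}$ while the $q$-part of $V_M$ contributes $-\alpha^{q-1}/s^q$; whenever $2p-2<q-1$ (e.g.\ $p=1.1$, $q=1.5$) the sum $V_M(s)+\gamma(s)$ is negative for large $\alpha$, so $R(v)$ is not nonnegative. What \emph{is} true---and what you yourself invoke in the non-injective case---is that $\inf_{s\in(0,a)}(V_M+\gamma)$ is of strictly smaller order than $|E_j(M)|\sim\alpha^{2(q-1)/(2-q)}$, hence $R(v)\ge E_j(M)\|v\|^2$ for large $\alpha$. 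Once you replace ``$R(v)\ge 0$'' by this weaker (but correct) bound, your injective-case argument goes through, since $\|u\|^2=\|f\|^2+\|v\|^2$ and $E_j(M)<0$ let you combine $E_j(M)\|f\|^2$ and $E_j(M)\|v\|^2$ into $E_j(M)\|u\|^2$. A minor additional point: the assembled inequality $(1-\eps)\|f'\|^2+q_M(f)$ double-counts $\|f'\|^2$, since $q_M$ already contains the kinetic term; presumably you meant $\int V_M|f|^2$ there, with the $(1-\eps)$ then absorbed into the prefactor $(1-C\alpha^{1-p})$ at the cost of enlarging $c$ to $C$ (which simultaneously shifts the coupling constants in $M$, as in the statement of the lemma).
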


\begin{proof}
By Proposition \ref{prop6} we have
\begin{equation*}
E_j(T_\alpha)\ge \inf_{\substack{S\subset D_0(p^-_\alpha) \\ {\rm dim\,  }S =j }} \, \sup_{\substack{u\in S\\ u\neq 0}}\,  \frac{p_\alpha^-(u,u)}{\quad \|u\|_{L^2(\Pi_\alpha)}^2}.
\end{equation*}
We want to have a more convenient expression for $p_\alpha^-$ to compare it to an one dimensional Robin-Laplacian. There holds
\begin{align*}
p_\alpha^-&(u,u)=(1-c\alpha^{1-p})\int_0^a\bigg[ \int_{-\alpha^{1-q}}^{\alpha^{1-q}}\dfrac{1}{s^{2p}}
\\
&\cdot\bigg\{\int_{-\alpha^{1-p}}^{\alpha^{1-p}}|\partial_{t_1} u|^2 \dd t_1 - \dfrac{s^p}{1-c\alpha^{1-p}}\left(|u(\alpha^{1-p}, t_2,s)|^2+|u(-\alpha^{1-p},t_2,s)|^2\right)\bigg\}\dd t_2
\\
&+\int_{-\alpha^{1-p}}^{\alpha^{1-p}}\dfrac{1}{s^{2q}}
\\
&\cdot\bigg\{\int_{-\alpha^{1-q}}^{\alpha^{1-q}}|\partial_{t_2} u|^2 \dd t_2 -\dfrac{s^q}{1-c\alpha^{1-p}}\left(|u(t_1,\alpha^{1-q},s)|^2+|u(t_1,-\alpha^{1-q},s)|^2\right)\bigg\}\dd t_1
\\
&+\int_{\omega_{\alpha}}|\partial_s u|^2+\frac{(p+q)^2-2(p+q)}{4s^2}|u|^2\dd t\bigg]\dd s
\end{align*}
Note that the functionals in the curly brackets are the quadratic forms $b_{\alpha^{1-k},\rho_k(s)}$ as defined in Subsection \ref{sec-1d} with 
\[
\rho_k:s\mapsto\dfrac{s^k}{1-c\alpha^{1-p}}\in C^1([0,a])\cap C^\infty(0,a),\quad k\in\{p,q\}.
\]
 Let $k\in\{p,q\}$ and $(\Phi_{1,k,\rho_k(s)})_{s\in(0,a)}$ be a smooth family of normalized eigenfunctions corresponding to $(E_1(B_{\alpha^{1-k},\rho_{k}(s)}))_{s\in(0,a)}$ (see Lemma \ref{lem-1} $(d)$).
We decompose each $u\in D_0(p_\alpha^-)$ as
\begin{align*}
u =&\ v + w  \ \text{ with } 
\\ 
v(t,s)=&f(s)  \Phi_{1,p,\rho_p(s)}(t_1)\Phi_{1,q,\rho_q(s)}(t_2),
\\
f(s) :=& \int_{\omega_\alpha} u(t,s)\, \Phi_{1,p,\rho_p(s)}(t_1)\Phi_{1,q,\rho_q(s)}(t_2)\, \dd t.
\end{align*}
By construction we have $f\in C^\infty_c(0,a)$ and, furthermore,
\begin{equation} \label{orth}
\begin{aligned}
\int_{\omega_\alpha} w(t,s)\, v(t,s)\, \dd t &= 0 \quad \text{ for any $s\in (0,a)$,}\\
\|f\|^2_{L^2(0,a)}&=\|v\|^2_{L^2(\Pi_\alpha)},\\
\|u\|^2_{L^2(\Pi_\alpha)}&=\|f\|^2_{L^2(0,a)} + \|w\|^2_{L^2(\Pi_\alpha)}.  
\end{aligned}
\end{equation}
The spectral theorem and Lemma \ref{lem-1} applied to $\dfrac{1}{s^{2p}}B_{\alpha^{1-p},\rho_{p}(s)}\otimes 1 + 1 \otimes\dfrac{1}{s^{2q}}B_{\alpha^{1-q},\rho_{q}(s)}$ implies that there exists $\alpha_0$ such that for all $\alpha>\alpha_0$ and any $u\in D_0(p_\alpha^-)$ we have
\begin{align*}
   \label{eq-utw}
\int_{-\alpha^{1-q}}^{\alpha^{1-q}}\dfrac{1}{s^{2p}}&
\\
&\cdot\Big\{ \int_{-\alpha^{1-p}}^{\alpha^{1-p}}|\partial_{t_1} u|^2 \dd t_1 - \dfrac{s^p}{1-c\alpha^{1-p}}\left(|u(\alpha^{1-p}, t_2,s)|^2+|u(-\alpha^{1-p},t_2,s)|^2\right)\Big\}\dd t_2
\\
+\int_{-\alpha^{1-p}}^{\alpha^{1-p}}&\dfrac{1}{s^{2q}}
\\
&\cdot\Big\{  \int_{-\alpha^{1-q}}^{\alpha^{1-q}}|\partial_{t_2} u|^2 \dd t_2 -\dfrac{s^q}{1-c\alpha^{1-p}}\left(|u(t_1,\alpha^{1-q},s)|^2+|u(t_1,-\alpha^{1-q},s)|^2\right)\Big\}\dd t_1
\\
\ge\, \bigg[  &\dfrac{E_1(B_{\alpha^{1-p},\rho_{p}(s)})}{s^{2p}}+\dfrac{E_1(B_{\alpha^{1-q},\rho_{q}(s)})}{s^{2q}}\bigg]\, |f(s)|^2 
\\
+&\left[\dfrac{E_2(B_{\alpha^{1-p},\rho_{p}(s)})}{s^{2p}}+\dfrac{E_1(B_{\alpha^{1-q},\rho_{q}(s)})}{s^{2q}}\right]\, \int_{\omega_\alpha} |w(t,s)|^2\, \dd t \, .
\end{align*}
By Lemma \ref{lem-1} we can find constants $C_i>0$, such that
\begin{equation}\label{minmax}
\begin{aligned}
\bigg[&\dfrac{E_1(B_{\alpha^{1-p},\rho_{p}(s)})}{s^{2p}}+\dfrac{E_1(B_{\alpha^{1-q},\rho_{q}(s)})}{s^{2q}}\bigg]\, |f(s)|^2 
\\
&+\left[\dfrac{E_2(B_{\alpha^{1-p},\rho_{p}(s)})}{s^{2p}}+\dfrac{E_1(B_{\alpha^{1-q},\rho_{q}(s)})}{s^{2q}}\right]\, \int_{\omega_\alpha} |w(t,s)|^2\, \dd t \,
\\
\ge &\left[-\dfrac{\rho_p(s)\alpha^{p-1}+C_1|\rho_p(s)|^2}{s^{2p}}-\dfrac{\rho_q(s)\alpha^{q-1}+C_1|\rho_q(s)|^2}{s^{2q}}\right]|f(s)|^2 
\\
&+\!\!\left[\!\dfrac{\frac{\pi^2}{4}\alpha^{2p-2}\!-\!2\rho_{p}(s)\alpha^{p-1}\!\!-\!C_1|\rho_{p}(s)|^2}{s^{2p}}-\dfrac{\rho_q(s)\alpha^{q-1}\!\!+\!C_1|\rho_q(s)|^2}{s^{2q}}\right]\!\!\int_{\omega_\alpha}\!\!\! |w(t,s)|^2\! \dd t
\\
\ge& \left[-\dfrac{\alpha^{p-1}}{(1-c\alpha^{1-p})s^{p}}-\dfrac{\alpha^{q-1}}{(1-c\alpha^{1-p})s^{q}}\right]|f(s)|^2 
\\
&+\left[\frac{\pi^2\alpha^{2p-2}}{4s^{2p}}-\dfrac{2\alpha^{p-1}}{(1-c\alpha^{1-p})s^{p}}-\dfrac{\alpha^{q-1}}{(1-c\alpha^{1-p})s^{q}}\right]\int_{\omega_\alpha} |w(t,s)|^2\dd t
\\
&-C_2 \left(|f(s)|^2+\int_{\omega_\alpha} |w(t,s)|^2\dd t\right)
\end{aligned}
\end{equation}
for all $u\in D_0(p_\alpha^-)$. 

Let us now study the integral
\begin{equation}\label{sabh}
\int_0^a\int_{\omega_{\alpha}}|\partial_s u|^2+\frac{(p+q)^2-2(p+q)}{4s^2}|u|^2\dd t\dd s.
\end{equation}
For the sake of brevity we will denote 
\begin{align*}
\Phi(t_1,t_2,s) &:= \Phi_{1,p,\rho_p(s)}(t_1)\Phi_{1,q,\rho_q(s)}(t_2),\quad v_s:=\partial_s v, \quad w_s:=\partial_s w\, .
\end{align*}
Using the orthogonality relation from \eqref{orth} we obtain
\begin{equation}\label{us}
\begin{aligned}
\int_0^a\!\!\int_{\omega_\alpha}&  |\partial_s u|^2+\frac{(p+q)^2-2(p+q)}{4 s^2} \, u^2  \dd t \dd s   
\\
= \int_0^a\!\! &\int_{\omega_\alpha} v_s^2+\frac{(p+q)^2-2(p+q)}{4 s^2} \, v^2  \dd t \dd s+ 2 \int_0^a\!\! \int_{\omega_\alpha} v_s\, w_s\, \dd t\, \dd s
\\
&+\int_0^a\!\! \int_{\omega_\alpha}  w_s^2+\frac{(p+q)^2-2(p+q)}{4 s^2} \, w^2  \dd t \dd s .
\end{aligned}
\end{equation}
Due to the normalization of $\Phi$ we have
\[
\int_{\omega_\alpha} 2\Phi\, \partial_s\Phi \dd t=\partial_s \int_{\omega_\alpha} |\Phi|^2\dd t=\partial_s 1=0.
\]
The preceding orthogonality relation shows the following
\begin{align*}
\int_{\omega_\alpha}  v_s^2\dd t&=\int_{\omega_\alpha} |f'|^2|\Phi|^2+2f'f\Phi\,\partial_s\Phi+|f|^2|\partial_s \Phi|^2\dd t
\\
&=|f'|^2+\int_{\omega_\alpha}|f|^2|\partial_s\Phi|^2\dd t\ge|f'|^2
\end{align*}
and, consequently,
\begin{align*}
\int_0^a \int_{\omega_\alpha}  v_s^2+\frac{(p+q)^2-2(p+q)}{4 s^2} \, v^2  \dd t \dd s \geq  \int_0^a  \, |f'|^2 + \frac{(p+q)^2-2(p+q)}{4s^2}\, f^2  \dd s. \label{vs-lowerb}
\end{align*}
In order to estimate the two last terms in \eqref{us} we note that  
\begin{equation*} \label{crossed}
 2\int_0^a \int_{\omega_\alpha} v_s\, w_s\, \dd t \dd s  =  2\int_0^a \int_{\omega_\alpha} f'\, \Phi\, w_s + f\, (\partial_s \Phi)\, w_s \dd t\dd s
\end{equation*}
and that $\langle w,\Phi\rangle_{L^2(\omega_\alpha)}=0$, which implies
\[
\int_{\omega_\alpha}\Phi\, w_s\dd t = -\int_{\omega_\alpha}(\partial_s \Phi)\, w\dd t.
\]
Hence,
\begin{equation*}
2\int_0^a \int_{\omega_\alpha} v_s\, w_s\, \dd t \dd s  =  2\int_0^a \int_{\omega_\alpha}  f\, (\partial_s \Phi)\, w_s - f'\, (\partial_s\Phi)\, w \dd t\dd s.
\end{equation*}
Due to Lemma \ref{lem2} and Young's inequality, there exists a $C>0$ such that for large $\alpha$ there holds
\begin{align*}
\int_0^a \int_{\omega_\alpha}  |f\, (\partial_s \Phi)\, w_s|\dd t\dd s&\leq C(\alpha^{2-2p}+\alpha^{2-2q})\int_0^a  |f|^2 \dd s + \frac{1}{2} \| w_s\|^2_{L^2(\Pi_\alpha)},
\\
\int_0^a \int_{\omega_\alpha}  |f'\, (\partial_s\Phi)\, w|\dd t\dd s&\leq C(\alpha^{2-2p}+\alpha^{2-2q})\int_0^a  |f'|^2 \dd s + \frac{1}{2} \| w\|^2_{L^2(\Pi_\alpha)}.
\end{align*}
Combining these inequalities gives
\begin{align*}
2&\int_0^a\! \int_{\omega_\alpha} |v_s\, w_s| \dd t \dd s \leq 2C(\alpha^{2-2p}+\alpha^{2-2q}) \int_0^a \!|f|^2\!+\!|f'|^2\dd s+ \| w_s\|^2_{L^2(\Pi_\alpha)} \!+ \| w\|^2_{L^2(\Pi_\alpha)}.
\end{align*}
Finally we get an estimation for \eqref{sabh}
\begin{equation}\label{born-op}
\begin{aligned}
\int_0^a\int_{\omega_{\alpha}}&|\partial_s u|^2+\frac{(p+q)^2-2(p+q)}{4s^2}|u|^2\dd t\dd s
\\
\ge \int_0^a&\bigg[ |f'|^2+\dfrac{(p+q)^2-2(p+q)}{4s^2}\left(|f|^2+\int_{\omega_\alpha}w^2\dd t\right)+\int_{\omega_\alpha}w_s^2\dd t
\\
&-\widetilde C\left(\alpha^{2-2p}+\alpha^{2-2q})(|f|^2+|f'|^2\right)-\int_{\omega_\alpha}w_s^2\dd t-\int_{\omega_{\alpha}}w^2\dd t\bigg] \dd s
\\
\ge \int_0^a&(1-\widetilde C\alpha^{1-p}) |f'|^2+\dfrac{(p+q)^2-2(p+q)}{4s^2}|f|^2\dd s-\widetilde C\|u\|^2_{L^2(\Pi_\alpha)}
\end{aligned}
\end{equation}
for an appropriate $\widetilde C>0$ and $\alpha$ large enough.
Using the lower bounds in \eqref{minmax}, \eqref{born-op} and adjusting $\widetilde C>0$ we can choose $C>0$ such that
\begin{align*}
p_\alpha^-(u,u)  \ge   &(1- c\alpha^{1-p})\int_0^a\bigg((1-\widetilde C\alpha^{1-p})|f'|^2+\bigg[\dfrac{(p+q)^2-2(p+q)}{4s^2}
\\
&-\dfrac{\alpha^{p-1} }{(1- c\alpha^{1-p})s^{p}}-\dfrac{\alpha^{q-1}}{(1- c\alpha^{1-p})s^{q}}\bigg]|f|^2
\\
&+\bigg[\frac{\pi^2\alpha^{2p-2}}{4s^{2p}}-\dfrac{2\alpha^{p-1}}{(1- c\alpha^{1-p})s^{p}}-\dfrac{\alpha^{q-1}}{(1- c\alpha^{1-p})s^{q}}\bigg]\int_{\omega_{\alpha}}\!\!w^2\dd t\bigg)\dd s-\widetilde C\|u\|_{L^2(\Pi_\alpha)}^2
\\
\ge& (1- C\alpha^{1-p})\int_0^a \bigg(|f'|^2+\bigg[\dfrac{(p+q)^2-2(p+q)}{4s^2}
\\
&-\dfrac{\alpha^{p-1} }{(1- C\alpha^{1-p})s^{p}}-\dfrac{\alpha^{q-1}}{(1- C\alpha^{1-p})s^{q}}\bigg]|f|^2
\\
&+\!\left[\frac{\pi^2\alpha^{2p-2}}{4s^{2p}}-\dfrac{2\alpha^{p-1}}{(1- C\alpha^{1-p})s^{p}}-\dfrac{\alpha^{q-1}}{(1- C\alpha^{1-p})s^{q}}\right]\!\int_{\omega_{\alpha}}\!\!w^2\dd t\!\bigg)\!\dd s- C\|u\|_{L^2(\Pi_\alpha)}^2
\end{align*}
holds. One recognizes the quadratic form for $M_{\frac{\alpha^{p-1}}{1-C\alpha^{1-p}},\frac{\alpha^{q-1}}{1-C\alpha^{1-p}},a}$ as well as the quadratic form
\begin{equation*}
g_{\alpha,a}(w,w)=\int_0^a \left[\frac{\pi^2\alpha^{2p-2}}{4s^{2p}}-\dfrac{2\alpha^{p-1}}{(1- C\alpha^{1-p})s^{p}}-\dfrac{\alpha^{q-1}}{(1- C\alpha^{1-p})s^{q}}\right]\!\int_{\omega_{\alpha}}\!\!w^2\dd t\dd s
\end{equation*}
with $w\in D_0(p_\alpha^-)$. As a result of the min-max principle we have
\begin{align*}
&E_j(T_\alpha)\ge\inf_{\substack{S\subset D_0(p^-_\alpha) \\ {\rm dim\,  }S =j }} \, \sup_{\substack{u\in S\\ u\neq 0}}\,  \frac{p_\alpha^-(u,u)}{\quad \|u\|_{L^2(\Pi_\alpha)}^2}
\\
&\ge(1- C\alpha^{1-p})\!\inf_{\substack{S\subset C_c^\infty(0,a)\times D_0(p_\alpha^-) \\ {\rm dim\,  }S =j }} \, \sup_{\substack{(f,w)\in S\\ (f,w)\neq 0}}\dfrac{\langle (f,w), M_{\frac{\alpha^{p-1}}{1-C\alpha^{1-p}},\frac{\alpha^{q-1}}{1-C\alpha^{1-p}},a}\oplus G_{\alpha,a}(f,w)\rangle}{\| f\|_{L^2(0,a)}^2+\| w\|_{L^2(\Pi_\alpha)}^2}-C
\end{align*}
where $G_{\alpha,a}$ is the self-adjoint operator in $L^2(\Pi_\alpha)$ associated with $g_{\alpha,a}$. 
This implies
\begin{align*}
E_j(T_\alpha)&\ge (1-C\alpha^{1-p})\Lambda_j\left(M_{\frac{\alpha^{p-1}}{1-C\alpha^{1-p}},\frac{\alpha^{q-1}}{1-C\alpha^{1-p}},a}\oplus G_{\alpha,a}\right)- C.
\end{align*}
To end the proof we have to show that
\begin{equation}\label{mult op}
\Lambda_j\left(M_{\frac{\alpha^{p-1}}{1- C\alpha^{1-p}},\frac{\alpha^{q-1}}{1- C\alpha^{1-p}},a}\oplus G_{\alpha,a}\right)=E_j\left(M_{\frac{\alpha^{p-1}}{1- C\alpha^{1-p}},\frac{\alpha^{q-1}}{1- C\alpha^{1-p}},a}\right)
\end{equation}
for large $\alpha$. 
Making use of the unitary equivalence \eqref{uemodel} and Lemma \ref{modelop1} (by suitably adjusting constants) we get
\begin{equation}
\begin{aligned}
E_j\left(M_{\frac{\alpha^{p-1}}{1- C\alpha^{1-p}},\frac{\alpha^{q-1}}{1- C\alpha^{1-p}},a}\right)&\leq 
E_j\left(A_{\frac{\alpha^{p-1}}{1- C\alpha^{1-p}},\frac{\alpha^{q-1}}{1-C\alpha^{1-p}}}\right)+K
\\\label{truncated uppper bound}
&\le E_j\left(A_{0,\frac{\alpha^{q-1}}{1- C\alpha^{1-p}}}\right)+K
\\
&\le\alpha^{2\frac{q-1}{2-q}} E_j\left(A_{0,1}\right)+o(\alpha^{2\frac{q-1}{2-q}})
\end{aligned}
\end{equation}
for large enough $\alpha$.
It is left to show that the bottom of the spectrum of $G_{\alpha,a}$ is bigger than \eqref{truncated uppper bound}. 
Since $G_{\alpha,a}$ is a multiplication operator, it is sufficient to show that there exists $\alpha_0>0$ and $E(\alpha_0)>E_j(A_{0,1})$ such that for all $\alpha>\alpha_0$ there holds 
\[
\min_{s\in(0,a)}\frac{\pi^2\alpha^{2p-2}}{4s^{2p}}-\frac{2\alpha^{p-1}}{(1-C\alpha^{1-p})s^p}-\frac{\alpha^{q-1}}{(1-C\alpha^{1-p})s^q}>\alpha^{2\frac{q-1}{2-q}}\,E(\alpha_0).
\]
We estimate the minimum from below in the following way: There exists $c_0>0$ such that
\begin{align*}
\min_{s\in(0,a)}\frac{\pi^2\alpha^{2p-2}}{4s^{2p}}&-\frac{2\alpha^{p-1}}{(1-C\alpha^{1-p})s^p}-\frac{\alpha^{q-1}}{(1-C\alpha^{1-p})s^q}
\\
&\ge\min_{s\in(0,a)}\frac{c_0\alpha^{2p-2}}{s^{2p}}-\frac{\alpha^{q-1}}{(1-C\alpha_0^{1-p})s^q}
\\
&=\alpha^{2\frac{q-1}{2-q}}\min_{s\in(0,a)}\frac{c_0\alpha^{2p-2-2\frac{q-1}{2-q}}}{s^{2p}}-\frac{\alpha^{q-1-2\frac{q-1}{2-q}}}{(1-C\alpha_0^{1-p})s^q}
\\
&\ge\alpha^{2\frac{q-1}{2-q}}\min_{s\in(0,\infty)}\frac{c_0\alpha^{2p-2-2\frac{q-1}{2-q}}}{s^{2p}}-\frac{\alpha^{q-1-2\frac{q-1}{2-q}}}{(1-C\alpha_0^{1-p})s^q}
\\
&=\alpha^{2\frac{q-1}{2-q}}\min_{s\in(0,\infty)}\frac{c_0\alpha^{2p-2-2\frac{q-1}{2-q}-\frac{2p}{q}(q-1-2\frac{q-1}{2-q})}}{s^{2p}}-\frac{1}{(1-C\alpha_0^{1-p})s^q}
\\
&=\alpha^{2\frac{q-1}{2-q}}\min_{s\in(0,\infty)}\frac{c_0\alpha^{\frac{2p}{q}-2+(\frac{4p}{q}-2)\frac{q-1}{2-q}}}{s^{2p}}-\frac{1}{(1-C\alpha_0^{1-p})s^q}
\\
&\ge\alpha^{2\frac{q-1}{2-q}}\min_{s\in(0,\infty)}\frac{c_0\alpha_0^{\frac{2p}{q}-2+(\frac{4p}{q}-2)\frac{q-1}{2-q}}}{s^{2p}}-\frac{1}{(1-C\alpha_0^{1-p})s^q}.
\end{align*}
Remark that the last inequality holds due to the fact that the exponent $\frac{2p}{q}-2-(\frac{4p}{q}-2)\frac{q-1}{2-q}$ is positive, which also can be shown by an easy calculation.
Furthermore we have
\[
\min_{s\in(0,\infty)}\frac{c_0\alpha_0^{\frac{2p}{q}-2+(\frac{4p}{q}-2)\frac{q-1}{2-q}}}{s^{2p}}-\frac{1}{(1-C\alpha_0^{1-p})s^q}=:E(\alpha_0) \rightarrow 0\quad \text{as }\alpha_0\rightarrow\infty.
\]
That means we can choose $\alpha_0$ large enough to get $E_j(A_{0,1})<E(\alpha_0)$. Which in return yields the claim in \eqref{mult op}.
\end{proof}

\begin{prop}\label{prop8}
	For any $j\in\NN$ there exist $\alpha_0>$ and $\widetilde C>0$ such that
	\[
	E_j(T_\alpha)\ge \alpha^{2\frac{q-1}{2-q}}E_j(A_{0,1})-\widetilde C\left(\alpha^{2\frac{q-1}{2-q}-(p-1)}+\alpha^{2\frac{q-1}{2-q}-\frac{q-p}{2-q}}\right)
	\]
	for all $\alpha>\alpha_0$.
\end{prop}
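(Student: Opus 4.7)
The plan is to mirror the chain of reductions used in Proposition \ref{prop7} (Lemma \ref{lem9}, then \eqref{model lowbd}, then the unitary equivalence \eqref{uemodel}), but to replace the step $A_{c_1,c_2}\leq A_{0,c_2}$ (which is only useful for upper bounds) by a quantitative stability estimate comparing $E_j(A_{c,1})$ to $E_j(A_{0,1})$ as $c\to 0^+$. Concretely, starting from Lemma \ref{lem9} and \eqref{model lowbd} I would first write
\[
E_j(T_\alpha)\ge (1-C\alpha^{1-p})\,E_j(A_{c_1,c_2})-C,\qquad c_1:=\tfrac{\alpha^{p-1}}{1-C\alpha^{1-p}},\quad c_2:=\tfrac{\alpha^{q-1}}{1-C\alpha^{1-p}},
\]
and then apply \eqref{uemodel} with the choice $c=c_2^{1/(2-q)}$, yielding
\[
E_j(A_{c_1,c_2})=c_2^{2/(2-q)}\,E_j(A_{c_1'',\,1}),\qquad c_1'':=c_1\,c_2^{(p-2)/(2-q)}.
\]
An elementary exponent calculation gives $c_1''=O(\alpha^{-(q-p)/(2-q)})$, so the problem is reduced to a stability statement of the form $E_j(A_{c,1})\ge E_j(A_{0,1})-C c$ for small $c>0$.

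The key technical step is this perturbation lemma for $A_{c,1}$. My plan is to use the pointwise inequality
\[
\frac{1}{s^p}\le \frac{\varepsilon}{s^q}+\varepsilon^{-p/(q-p)},\qquad s>0,\ \varepsilon>0,
\]
which holds because $s\mapsto 1/s^p$ is decreasing and the two curves cross at $s=\varepsilon^{1/(q-p)}$. Turning this into a form inequality gives
\[
a_{c,1}(f,f)\ge a_{0,\,1+c\varepsilon}(f,f)-c\,\varepsilon^{-p/(q-p)}\,\|f\|^2_{L^2(0,\infty)},
\]
and the min-max principle together with \eqref{uemodel} produces
\[
E_j(A_{c,1})\ge (1+c\varepsilon)^{2/(2-q)}E_j(A_{0,1})-c\,\varepsilon^{-p/(q-p)}.
\]
Since $E_j(A_{0,1})<0$ and $(1+c\varepsilon)^{2/(2-q)}=1+O(c\varepsilon)$ as $c\varepsilon\to 0$, fixing any $\varepsilon>0$ (say $\varepsilon=1$) yields $E_j(A_{c,1})\ge E_j(A_{0,1})-Cc$ as desired.

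To finish I would track the errors carefully. Expanding $c_2^{2/(2-q)}=\alpha^{2(q-1)/(2-q)}\bigl(1+O(\alpha^{1-p})\bigr)$ and using $c_2^{2/(2-q)}c_1''=O(\alpha^{(p+q-2)/(2-q)})$ together with the stability lemma gives
\[
E_j(A_{c_1,c_2})\ge \alpha^{2(q-1)/(2-q)}E_j(A_{0,1})-O\!\left(\alpha^{2(q-1)/(2-q)-(p-1)}\right)-O\!\left(\alpha^{2(q-1)/(2-q)-(q-p)/(2-q)}\right).
\]
Multiplication by the prefactor $(1-C\alpha^{1-p})\in(0,1)$ only moves the negative main term closer to zero (and produces strictly smaller corrections), while the additive constant $-C$ from Lemma \ref{lem9} is absorbed into the second error term whose exponent $(p+q-2)/(2-q)$ is strictly positive since $p+q>2$. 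This gives the claim.

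The main obstacle I anticipate is precisely the perturbation lemma for $A_{c,1}$: the potentials $1/s^p$ and $1/s^q$ are both unbounded near $0$, so the comparison cannot be done by a naive $L^\infty$-perturbation argument and genuinely relies on the fact that $p<q$, which makes $1/s^p$ form-dominated by $1/s^q$ in the precise quantitative sense above. Everything else is bookkeeping along the already-established chain of reductions.
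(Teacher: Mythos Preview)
Your proposal is correct and follows essentially the same route as the paper: Lemma~\ref{lem9} combined with \eqref{model lowbd} and the scaling \eqref{uemodel}, followed by a comparison of $a_{c,1}$ with $a_{0,1+O(c)}$ via the pointwise domination of $s^{-p}$ by $s^{-q}$ plus a constant. The only cosmetic difference is that the paper obtains the form inequality by splitting the $s$-integral at $s=1$ (using $s^{-p}\le s^{-q}$ on $(0,1)$ and $s^{-p}\le 1$ on $(1,\infty)$), which is exactly your inequality with $\varepsilon=1$.
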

\begin{proof}
Lemma \ref{lem9} and \eqref{model lowbd} show
\begin{equation}\label{intermediate}
\begin{aligned}
E_j(T_\alpha)&\ge (1-C\alpha^{1-p})E_j\left(M_{\frac{\alpha^{p-1}}{1-C\alpha^{1-p}},\frac{\alpha^{q-1}}{1-C\alpha^{1-p}},a}\right)
\\
&\ge (1-C\alpha^{1-p}) E_j\left(A_{\frac{\alpha^{p-1}}{1-C\alpha^{1-p}},\frac{\alpha^{q-1}}{1-C\alpha^{1-p}}}\right).
\end{aligned}
\end{equation}
Due to the unitary equivalence \eqref{uemodel}, we get
\begin{equation*}
E_j\left(A_{\frac{\alpha^{p-1}}{1-C\alpha^{1-p}},\frac{\alpha^{q-1}}{1-C\alpha^{1-p}}}\right)=\left(\frac{\alpha^{q-1}}{1-C\alpha^{1-p}}\right)^\frac{2}{2-q} E_j\left(A_{\frac{\alpha^{p-1}}{1-C\alpha^{1-p}}\left(\frac{\alpha^{q-1}}{1-C\alpha^{1-p}}\right)^\frac{p-2}{2-q},1}\right).
\end{equation*}
We want to compare the eigenvalue on the right hand side  with $E_j(A_{0,1})$. Due to the min-max principle it is sufficient enough to estimate the corresponding quadratic form from below. There exists $\widetilde c>0$ such that for any $f\in C_c^\infty (0,\infty)$ there holds
\begin{align*}
a&_{\frac{\alpha^{p-1}}{1-C\alpha^{1-p}}\left(\frac{\alpha^{q-1}}{1-C\alpha^{1-p}}\right)^\frac{p-2}{2-q},1}(f,f)
\\
=&\int_0^\infty |f'|^2 +\left[\frac{(p+q)^2-2(p+q)}{4s^2}-\frac{\alpha^{\frac{q-p}{q-2}}}{(1-C\alpha^{1-p})^{\frac{q-p}{q-2}}s^p}-\frac{1}{s^q}\right]|f|^2\dd s
\\
\ge& \int_0^1 |f'|^2 +\left[\frac{(p+q)^2-2(p+q)}{4s^2}-\frac{1+\widetilde c\alpha^{\frac{q-p}{q-2}}}{s^q}\right]|f|^2\dd s
\\
&+\int_1^\infty |f'|^2 +\left[\frac{(p+q)^2-2(p+q)}{4s^2}-\frac{1}{s^q}\right]|f|^2\dd s-\widetilde c\alpha^{\frac{q-p}{q-2}}\|f\|_{L^2(1,\infty)}^2
\\
\ge& \int_0^\infty |f'|^2 +\left[\frac{(p+q)^2-2(p+q)}{4s^2}-\frac{1+\widetilde c\alpha^{\frac{q-p}{q-2}}}{s^q}\right]|f|^2\dd s-\widetilde c\alpha^{\frac{q-p}{q-2}}\|f\|_{L^2(0,\infty)}^2.
\end{align*}
Using this result and applying the unitary equivalence \eqref{uemodel} one more time we arrive at
\begin{align*}
&E_j\left(A_{\frac{\alpha^{p-1}}{1-C\alpha^{1-p}}\left(\frac{\alpha^{q-1}}{1-C\alpha^{1-p}}\right)^\frac{p-2}{2-q},1}\right)\ge E_j\left(A_{0,1+\tilde c \alpha^{\frac{q-p}{q-2}}}\right)-\widetilde c\alpha^{\frac{q-p}{q-2}}
\\
&=(1+\widetilde c\alpha^{\frac{q-p}{q-2}})^{\frac{2}{2-q}}E_j(A_{0,1})-\widetilde c \alpha^{\frac{q-p}{q-2}}\ge E_j(A_{0,1})-\widetilde C\alpha^{\frac{q-p}{q-2}},
\end{align*}
for $\widetilde C>0$ large enough. Combining this with \eqref{intermediate} and adjusting $\widetilde C$ gives the desired estimate
\begin{equation*}
E_j(T_\alpha)\ge \alpha^{2\frac{q-1}{2-q}}E_j(A_{0,1})-\widetilde C\left(\alpha^{2\frac{q-1}{2-q}-(p-1)}+\alpha^{2\frac{q-1}{2-q}-\frac{q-p}{2-q}}\right).
\end{equation*}
\end{proof}

\section{End of proof of Theorem \ref{thm1}}\label{ssthm1}

So far we have shown that $E_j(T_\alpha)\approx E_j(A_{0,1})\alpha^{2\frac{q-1}{2-q}}$, which corresponds to the leading term of $Q_{\alpha\Omega}^1$ according to Theorem \ref{thm1}, if one uses the unitary equivalence $Q_\Omega^\alpha\simeq \alpha^2Q_{\alpha\Omega}^1$. Therefore, in order to conclude the proof of Theorem \ref{thm1} it remains to show that the eigenvalues of $Q_{\alpha\Omega}^1$ and $T_\alpha$
with the same numbers are close to each other. We will use \eqref{sandwichRobin} as a starting point, to estimate $E_j(Q_{\alpha\Omega}^1)$ by means of $E_j(T_\alpha)$.

\begin{lemma}\label{lem10}
For any $j\in\NN$ and $\alpha>0$ the inequality $E_j(R_1^{D,\alpha\Omega_\delta})\leq E_{j}(T_\alpha)$ holds.
\end{lemma}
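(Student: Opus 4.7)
The plan is to use the min-max principle together with an extension-by-zero map from test functions on $V_\alpha$ to test functions on $\alpha\Omega_\delta$. Concretely, for each $u\in D(t_\alpha)=\widehat{H}^1_0(V_\alpha)$ I will construct $\widetilde u\in D(r_1^{D,\alpha\Omega_\delta})=\widehat{H}^1_0(\alpha\Omega_\delta)$ with $\|\widetilde u\|_{L^2(\alpha\Omega_\delta)}=\|u\|_{L^2(V_\alpha)}$ and $r_1^{D,\alpha\Omega_\delta}(\widetilde u,\widetilde u)=t_\alpha(u,u)$, so that the variational characterisation directly delivers the desired inequality.

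The first step, which is essentially the only subtle point, is a geometric identification: under the standing hypothesis $a<\min(\delta,1)$ and for $\alpha\ge 1$ one has $V_\alpha=\alpha\Omega_\delta\cap\{x_3<a\}$. Two observations enter. First, the Lipschitz far-field $\alpha(\Omega\setminus[-\delta,\delta]^3)$ is disjoint from $(-a,a)^3$ (since $a<\delta\le\alpha\delta$), so $V_\alpha=\alpha\Omega\cap(-a,a)^3$ only sees the scaled peak $\alpha\Omega_\delta$. Second, at every height $x_3\in(0,a)$ a point of $\alpha\Omega_\delta$ automatically satisfies $|x_1|<\alpha^{1-p}x_3^p\le a^p<a$ and $|x_2|<\alpha^{1-q}x_3^q\le a^q<a$, because $a<1$ and $p,q>1$. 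In particular, the complement $\alpha\Omega_\delta\setminus V_\alpha$ is the cylindrical piece $\alpha\Omega_\delta\cap\{x_3>a\}$.

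Given this picture, I set $\widetilde u:=u$ on $V_\alpha$ and $\widetilde u:=0$ on $\alpha\Omega_\delta\setminus V_\alpha$. The Dirichlet trace condition $u(\cdot,a)=0$ built into $\widehat{H}^1_0(V_\alpha)$ is exactly what makes the gluing across the interface $\{x_3=a\}$ admissible, so $\widetilde u\in H^1(\alpha\Omega_\delta)$; and since $\widetilde u\equiv 0$ in $\{x_3>a\}\cap\alpha\Omega_\delta$, in particular $\widetilde u(\cdot,\alpha\delta)=0$, whence $\widetilde u\in\widehat{H}^1_0(\alpha\Omega_\delta)=D(r_1^{D,\alpha\Omega_\delta})$. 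It is immediate that $\|\widetilde u\|_{L^2(\alpha\Omega_\delta)}^2=\|u\|_{L^2(V_\alpha)}^2$ and $\int_{\alpha\Omega_\delta}|\nabla\widetilde u|^2\,\rd x=\int_{V_\alpha}|\nabla u|^2\,\rd x$. For the boundary term I note the decomposition
\[
\partial_0(\alpha\Omega_\delta)=\partial_0 V_\alpha\cup\bigl(\partial(\alpha\Omega_\delta)\cap\{a\le x_3<\alpha\delta\}\bigr),
\]
on whose second piece $\widetilde u$ has vanishing trace (it is identically zero in the adjacent open set $\alpha\Omega_\delta\cap\{x_3>a\}$); consequently $r_1^{D,\alpha\Omega_\delta}(\widetilde u,\widetilde u)=t_\alpha(u,u)$.

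To finish, the map $u\mapsto\widetilde u$ is linear and injective, so it sends every $j$-dimensional subspace $S\subset\widehat{H}^1_0(V_\alpha)$ to a $j$-dimensional subspace $\widetilde S\subset\widehat{H}^1_0(\alpha\Omega_\delta)$ with identical Rayleigh quotients. The min-max principle of Subsection~\ref{ssmm} then gives
\[
E_j\bigl(R_1^{D,\alpha\Omega_\delta}\bigr)\le\sup_{v\in\widetilde S\setminus\{0\}}\frac{r_1^{D,\alpha\Omega_\delta}(v,v)}{\|v\|_{L^2(\alpha\Omega_\delta)}^2}=\sup_{u\in S\setminus\{0\}}\frac{t_\alpha(u,u)}{\|u\|_{L^2(V_\alpha)}^2},
\]
and taking the infimum over $S$ yields the claim. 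The main (and really only) obstacle is the geometric step verifying $V_\alpha=\alpha\Omega_\delta\cap\{x_3<a\}$; everything else is a standard domain-monotonicity-type argument via extension by zero.
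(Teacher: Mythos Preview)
Your proof is correct and follows the same approach as the paper: both use the extension-by-zero operator $J:L^2(V_\alpha)\to L^2(\alpha\Omega_\delta)$, observe that it is an isometry mapping $D(t_\alpha)$ into $D(r_1^{D,\alpha\Omega_\delta})$ with $r_1^{D,\alpha\Omega_\delta}(Ju,Ju)=t_\alpha(u,u)$, and conclude via the min-max principle. Your write-up is more detailed on the geometric verification $V_\alpha=\alpha\Omega_\delta\cap\{x_3<a\}$ (valid under the standing assumption $\alpha\ge 1$ from Subsection~\ref{sec-iso peak}), which the paper leaves implicit.
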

\begin{proof}
Let $J:L^2(V_\alpha)\to L^2(\alpha\Omega_\delta)$ be the operator of extension by zero, then $J$ is a linear isometry
with $JD(t_\alpha)\subset D(r_1^{D,\alpha\Omega_\delta})$ and with $r_1^{D,\alpha\Omega_\delta}(Ju,Ju)=t_\alpha(u,u)$ for all $u\in D(t_\alpha)$, and the result follows directly by the min-max principle.
\end{proof}
Now we are able to prove that the eigenvalues of $R_{\alpha}^{N,\Omega_\delta}\oplus K_{\alpha}^{N,\Theta_\delta}$ are determined by $R_{\alpha}^{N,\Omega_\delta}$ for large $\alpha$.
\begin{lemma}\label{10.5}
Let $j\in\N$, then there exists $\alpha_0>0$ such that for all $\alpha>\alpha_0$
\[
E_j(R_{\alpha}^{N,\Omega_\delta}\oplus K_{\alpha}^{N,\Theta_\delta})=E_j(R_{\alpha}^{N,\Omega_\delta}).
\]
\end{lemma}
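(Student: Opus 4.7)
The strategy is to exploit the direct-sum structure: since $\spec(R_\alpha^{N,\Omega_\delta}\oplus K_\alpha^{N,\Theta_\delta})=\spec R_\alpha^{N,\Omega_\delta}\cup\spec K_\alpha^{N,\Theta_\delta}$, it suffices to establish the strict inequality
\[
E_j(R_\alpha^{N,\Omega_\delta})<E_1(K_\alpha^{N,\Theta_\delta})
\]
for all $\alpha$ large enough: then the $j$ smallest eigenvalues of the direct sum (counted with multiplicity) are necessarily all contributed by the first summand, which gives the claim.

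For the upper bound on $E_j(R_\alpha^{N,\Omega_\delta})$, I would first use the inclusion $\widehat H^1_0(\Omega_\delta)\subset H^1(\Omega_\delta)$ of form domains to obtain $E_j(R_\alpha^{N,\Omega_\delta})\le E_j(R_\alpha^{D,\Omega_\delta})$ from the min-max principle, then chain the dilation identity $R_\alpha^{D,\Omega_\delta}\simeq\alpha^2 R_1^{D,\alpha\Omega_\delta}$ from Subsection \ref{isolateing peak} with Lemma \ref{lem10} to get $E_j(R_\alpha^{N,\Omega_\delta})\le\alpha^2 E_j(T_\alpha)$. Plugging in Proposition \ref{prop7} and using the algebraic identity $2+\tfrac{2(q-1)}{2-q}=\tfrac{2}{2-q}$ yields
\[
E_j(R_\alpha^{N,\Omega_\delta})\le\alpha^{\frac{2}{2-q}}E_j(A_{0,1})+C\alpha^{\frac{2}{2-q}-(p-1)}.
\]
Since $E_j(A_{0,1})<0$ and $p>1$, the right-hand side behaves like $-|E_j(A_{0,1})|\,\alpha^{2/(2-q)}$ as $\alpha\to\infty$.

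For the lower bound on $E_1(K_\alpha^{N,\Theta_\delta})$, I would invoke the standard fact that on a bounded Lipschitz domain the Robin-Laplacian with parameter $\alpha$ is bounded from below by $-K\alpha^2$ for $\alpha$ large (a direct consequence of Theorem 1.5.1.10 in \cite{grisvard}, as recalled in the introduction); the presence of a purely Neumann part of the boundary only improves the lower bound, so $E_1(K_\alpha^{N,\Theta_\delta})\ge-K\alpha^2$ for some $K>0$. Since $q>1$ gives $\tfrac{2}{2-q}>2$, for all sufficiently large $\alpha$ the upper bound of the previous paragraph is strictly less than $-K\alpha^2$, hence strictly less than $E_1(K_\alpha^{N,\Theta_\delta})$, which finishes the proof. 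The only real subtlety is matching the two exponents correctly; everything else is routine assembly of the bounds already established.
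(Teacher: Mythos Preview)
Your proof is correct and follows essentially the same route as the paper: both compare the $-K\alpha^2$ lower bound for $E_1(K_\alpha^{N,\Theta_\delta})$ (coming from the Lipschitz estimate on $\Theta_\delta$) with the $\alpha^{2/(2-q)}$-scale upper bound for $E_j(R_\alpha^{N,\Omega_\delta})$ obtained via $E_j(R_\alpha^{N,\Omega_\delta})\le E_j(R_\alpha^{D,\Omega_\delta})=\alpha^2 E_j(R_1^{D,\alpha\Omega_\delta})\le\alpha^2 E_j(T_\alpha)$ together with Proposition~\ref{prop7}. You just spell out the exponent bookkeeping and the form-domain inclusion a bit more explicitly than the paper does.
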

\begin{proof}
For the Robin-Laplacian $Q^{\alpha}_{\Theta_{\delta}}$ there holds $E_1(Q^{\alpha}_{\Theta_{\delta}})\ge -K\alpha^2$, since $\Theta_{\delta}$ is a bounded Lipschitz domain. Therefore, by the min-max principle, $E_1(K_{\alpha}^{N,\Theta_\delta})\ge -K\alpha^2$. The combination of Lemma \ref{lem10} and Proposition \ref{prop7} shows $E_j(R_{\alpha}^{D,\Omega_\delta})<-K\alpha^2$ for large $\alpha$. Applying the min-max principle once more shows $E_j(R_{\alpha}^{N,\Omega_\delta}\oplus K_{\alpha}^{N,\Theta_\delta})=E_j(R_{\alpha}^{N,\Omega_\delta})$, for large $\alpha$.
\end{proof}
Lemma \ref{10.5} improves inequality \eqref{sandwichRobin} to
\begin{equation}\label{sandwich}
E_j(R_{\alpha}^{N,\Omega_{\delta}})\le E_j(Q^{\alpha}_{\Omega})\le E_j(R_{\alpha}^{D,\Omega_{\delta}}).
\end{equation}
\begin{lemma}\label{lem11}
Let $j\in\NN$ then there exist $k>0$ and $\alpha_{0}>0$ such that for all $\alpha>\alpha_{0}$ there holds
$E_{j}(R_1^{N,\alpha\Omega_\delta})\ge E_j (T_\alpha)-k$.
\end{lemma}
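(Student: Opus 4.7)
The plan is to apply an IMS-type partition of unity in the $x_3$-variable to decompose the quadratic form of $R_1^{N,\alpha\Omega_\delta}$ into a part supported in the peak $V_\alpha$, which is controlled by $t_\alpha$, and a part supported in the complementary region $W_\alpha:=\alpha\Omega_\delta\cap\{x_3>a/2\}$, which is uniformly Lipschitz in $\alpha$. Fix $\chi_1,\chi_2\in C^\infty(\R;[0,1])$ depending only on $x_3$, with $\chi_1^2+\chi_2^2\equiv 1$, $\chi_1(x_3)=1$ for $x_3\le a/2$ and $\chi_1(x_3)=0$ for $x_3\ge a$, and set $K:=\|\chi_1'\|_\infty^2+\|\chi_2'\|_\infty^2$. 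The vanishing of $\chi_1$ at $x_3=a$ ensures that for any $u\in H^1(\alpha\Omega_\delta)$ the function $\chi_1 u$ restricted to $V_\alpha$ lies in $\Hat H_0^1(V_\alpha)=D(t_\alpha)$.

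The standard IMS identity $|\nabla u|^2=|\nabla(\chi_1u)|^2+|\nabla(\chi_2u)|^2-(|\chi_1'|^2+|\chi_2'|^2)|u|^2$ together with the pointwise equality $|u|^2=|\chi_1u|^2+|\chi_2u|^2$ on $\partial_0(\alpha\Omega_\delta)$ yields
\begin{equation*}
r_1^{N,\alpha\Omega_\delta}(u,u)\ge t_\alpha(\chi_1u,\chi_1u)+r^{N,W_\alpha}(\chi_2u,\chi_2u)-K\|u\|_{L^2(\alpha\Omega_\delta)}^2,
\end{equation*}
where $r^{N,W_\alpha}$ denotes the analogous form on $W_\alpha$ (Robin on the lateral boundary, Neumann on the horizontal lids). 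The key step is to establish a uniform lower bound $r^{N,W_\alpha}(w,w)\ge -C\|w\|_{L^2(W_\alpha)}^2$ with $C>0$ independent of $\alpha$. The geometric point is that the slopes $p\alpha^{1-p}x_3^{p-1}$ and $q\alpha^{1-q}x_3^{q-1}$ of the lateral surface of $\alpha\Omega_\delta$ stay uniformly bounded by $p\delta^{p-1}$ and $q\delta^{q-1}$ for $x_3\in(a/2,\alpha\delta)$, so that $W_\alpha$ has uniformly Lipschitz boundary. A Grisvard-type trace inequality then gives $\|w\|_{L^2(\partial W_\alpha)}^2\le\frac12\|\nabla w\|_{L^2(W_\alpha)}^2+C\|w\|_{L^2(W_\alpha)}^2$ with $C$ independent of $\alpha$. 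One can derive this concretely by flattening $W_\alpha$ to the cuboid $\omega_\alpha\times(a/2,\alpha\delta)$ via the diffeomorphism $X$ of Section~3.1, estimating the transformed boundary integrals as in \eqref{bdry}, and applying a standard one-dimensional trace estimate in the $t_1,t_2$ variables, whose constants do not degenerate because the Jacobian factors are uniformly bounded on $(a/2,\alpha\delta)$.

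Combining the two bounds with $\|u\|^2=\|\chi_1u\|^2+\|\chi_2u\|^2$ produces
\begin{equation*}
r_1^{N,\alpha\Omega_\delta}(u,u)\ge t_\alpha(\chi_1u,\chi_1u)-K\|\chi_1u\|^2-(K+C)\|\chi_2u\|^2.
\end{equation*}
The linear map $u\mapsto(\chi_1u,\chi_2u)$ is injective (since $\chi_1^2+\chi_2^2=1$) and $L^2$-norm preserving, so it sends any $j$-dimensional subspace of $H^1(\alpha\Omega_\delta)$ to a $j$-dimensional subspace of $D(t_\alpha)\oplus L^2(W_\alpha)$. Applying the min-max principle to the direct sum $\widetilde T:=(T_\alpha-K)\oplus(-(K+C)\id_{L^2(W_\alpha)})$ yields $\Lambda_j(R_1^{N,\alpha\Omega_\delta})\ge\Lambda_j(\widetilde T)$. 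The spectrum of $\widetilde T$ is the union of $\{E_i(T_\alpha)-K\}_{i\in\N}$ and the single point $-(K+C)$; by Proposition~\ref{prop7} one has $E_j(T_\alpha)\to-\infty$, so for all sufficiently large $\alpha$ the $j$ lowest eigenvalues of $\widetilde T$ are $E_1(T_\alpha)-K,\dots,E_j(T_\alpha)-K$, which gives $\Lambda_j(\widetilde T)=E_j(T_\alpha)-K$. Identifying the variational value with the eigenvalue finishes the proof with $k:=K$.

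The main obstacle is the second step: the uniform-in-$\alpha$ semiboundedness of the Robin-Laplacian on the expanding region $W_\alpha$. The slope bound is immediate, but one has to verify that the implicit trace constants remain controlled as $W_\alpha$ becomes narrow near $x_3=a/2$ (where the cross section has linear size $\mathcal{O}(\alpha^{1-p}+\alpha^{1-q})$) and wide near $x_3=\alpha\delta$ (where it has size $\mathcal{O}(\alpha)$). This is the only genuinely new geometric estimate needed; all remaining ingredients are already in place from Sections~2--3.
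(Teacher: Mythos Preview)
Your IMS localization and the subsequent min-max reduction are exactly the approach the paper takes; the gap is in the treatment of the complementary piece $W_\alpha$. The uniform-in-$\alpha$ lower bound $r^{N,W_\alpha}(w,w)\ge -C\|w\|^2$ you assert is \emph{false}, and the obstacle you flag at the end cannot be removed. The slopes of $\partial W_\alpha$ are indeed uniformly bounded, but a uniform Lipschitz constant does not yield a uniform trace constant: the latter also depends on the scale of the local charts. Concretely, the cross-section of $W_\alpha$ at $x_3=a/2$ is the rectangle $(-\alpha^{1-p}(a/2)^p,\alpha^{1-p}(a/2)^p)\times(-\alpha^{1-q}(a/2)^q,\alpha^{1-q}(a/2)^q)$, whose $x_2$-width shrinks like $\alpha^{1-q}$. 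Testing with a function that is constant on this cross-section and supported in a short $x_3$-slab shows that the ratio of boundary to volume integral is at least of order $\alpha^{q-1}$; equivalently, by Lemma~\ref{lem-1}(b) one has $E_1(B_{\alpha^{1-q}(a/2)^q,1})\sim -\alpha^{q-1}(2/a)^q$. So any trace inequality or form lower bound on $W_\alpha$ must have a constant blowing up at least like $\alpha^{q-1}$. Your proposed flattening via $X$ does not help: in the $(t_1,t_2,s)$-coordinates the transverse domain is $\omega_\alpha=(-\alpha^{1-p},\alpha^{1-p})\times(-\alpha^{1-q},\alpha^{1-q})$, and the one-dimensional trace constant on an interval of length $2L$ is of order $1/L$.

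The paper fixes this by \emph{not} seeking a uniform bound. Using the same cross-sectional Robin operators $B_{L,r}$ it shows $W_\alpha\ge -C_0\alpha^{q-1}$ (an $\alpha$-dependent lower bound), and then observes that by Propositions~\ref{prop7} and \ref{prop8} one has $E_j(T_\alpha)\sim E_j(A_{0,1})\,\alpha^{2(q-1)/(2-q)}$, which diverges to $-\infty$ strictly faster than $-C_0\alpha^{q-1}$ because $\tfrac{2(q-1)}{2-q}>q-1$. Hence for large $\alpha$ the $j$th eigenvalue of $T_\alpha\oplus W_\alpha$ equals $E_j(T_\alpha)$, and the constant $k$ in the final inequality is just the IMS localization constant $K$. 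Replace your ``uniform $C$'' step by this comparison and your argument goes through.
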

\begin{proof}

Analogously to Lemma \ref{modelop1} we use a IMS partition of unity. Let $\Phi_1,\Phi_2\in C^\infty(\R)$ such that $\Phi_1^2+\Phi_2^2=1,\ \Phi_1(s)=0\text{ for }s>a,\Phi_2(s)=0 \text{ for }s<a/2$ and define $k:=\|\Phi_1'\|_{\infty}^2+\|\Phi_2'\|_{\infty}^2,\ \chi_{j}(x_1,x_2,x_3)=\Phi_j(x_3)$ with $j\in\{1,2\}$. It follows that
\begin{align*}
r_1^{N,\alpha\Omega_\delta}(u,u)&=r_1^{N,\alpha\Omega_\delta}(\chi_1 u,\chi_1 u)+r_1^{N,\alpha\Omega_\delta}(\chi_2 u,\chi_2 u)-\int_{\alpha\Omega_\delta}(|\nabla\chi_1|^2+|\nabla \chi_2|^2)u^2\dd x
\\
&\ge r_1^{N,\alpha\Omega_\delta}(\chi_1 u,\chi_1 u)+r_1^{N,\alpha\Omega_\delta}(\chi_2 u,\chi_2 u)-k\|u\|^2_{L^2(\alpha\Omega_{\delta})}.
\end{align*}
We define
\begin{align*}
w_\alpha(u,u):&=\int_{\widetilde W_\alpha}|\nabla u|^2\dd x-\int_{\partial_0 \widetilde W_\alpha}u^2\dd \sigma,
\\
\widetilde W_\alpha:&=(\alpha\Omega_\delta)\cap\left\{\frac{a}{2}<x_3\right\},
\\
\partial_0 \widetilde W_\alpha:&=\partial(\alpha\Omega_\delta)\cap\left\{\frac{a}{2}<x_3<\alpha\delta\right\},
\\
D(w_\alpha)&=\widehat{H}_0^1(\widetilde W_\alpha):=\left\{u\in H^1(\widetilde W_{\alpha}):u\left(\cdot,\frac{a}{2}\right)=0\right\}.
\end{align*} 
Then we have
\begin{align*}
\|\chi_1 u\|_{L^2(\alpha\Omega_\delta)}&=\|\chi_1 u\|_{L^2(V_\alpha)},\quad \|\chi_2 u\|_{L^2(\alpha\Omega_\delta)}=\|\chi_2 u\|_{L^2(\widetilde W_\alpha)},
\\
r_1^{N,\alpha\Omega_\delta}(\chi_1 u,\chi_1 u)&=t_\alpha(\chi_1 u,\chi_1 u),\quad r_1^{N,\alpha\Omega_\delta}(\chi_2 u,\chi_2 u)=w_\alpha(\chi_2 u,\chi_2 u).
\end{align*}
Plugging these results into the min-max principle yields
\begin{equation}\label{ims2}
\begin{aligned}
E_j(R_1^{N,\alpha\Omega_\delta})+k&=\inf_{\substack{S\subset D\big(r_1^{N,\alpha\Omega_\delta}\big)\\ \dim S=j}} \sup_{\substack{u\in S\\ u\ne 0}} \dfrac{r_1^{N,\alpha\Omega_\delta}(u,u) +k\|u\|^2_{L^2(\alpha\Omega_\delta)}}{\|u\|^2_{L^2(\alpha\Omega_\delta)}}
\\
&\ge\inf_{\substack{S\subset D\left(r_1^{N,\alpha\Omega_\delta}\right)\\ \dim S=j}} \sup_{\substack{u\in S\\ u\ne 0}} \dfrac{r_1^{N,\alpha\Omega_\delta}(\chi_1 u,\chi_1 u) +r_1^{N,\alpha\Omega_\delta}(\chi_2 u,\chi_2 u)}{\|\chi_1 u\|^2_{L^2(\alpha\Omega_\delta)}+\|\chi_2 u\|^2_{L^2(\alpha\Omega_\delta)}}
\\
&=\inf_{\substack{S\subset D\left(r_1^{N,\alpha\Omega_\delta}\right)\\ \dim S=j}} \sup_{\substack{u\in S\\ u\ne 0}} \dfrac{t_\alpha(\chi_1 u,\chi_1 u) +w_\alpha(\chi_2 u,\chi_2 u)}{\|\chi_1 u\|^2_{L^2(V_\alpha)}+\|\chi_2 u\|^2_{L^2(\widetilde W_\alpha)}}
\\
&\ge \inf_{\substack{S\subset D(t_\alpha)\times D(w_\alpha)\\ \dim S=j}} \sup_{\substack{(u_1,u_2)\in S\\ (u_1,u_2)\ne 0}} \dfrac{t_\alpha(u_1,u_1) +w_\alpha(u_2,u_2)}{\|u_1\|^2_{L^2(V_\alpha)}+\|u_2\|^2_{L^2(\widetilde W_\alpha)}}
\\
&=E_j(T_\alpha\oplus W_\alpha).
\end{aligned}
\end{equation}
Our aim now is to find a lower bound for $W_\alpha$, which shows that $E_j(T_\alpha\oplus W_\alpha)=E_j(T_\alpha)$ for large enough $\alpha$. There holds
\begin{align*}
&\int_{\widetilde W_\alpha}|\nabla u|^2\dd x-\int_{\partial_0 \widetilde W_\alpha}u^2\dd \sigma =\int_{\frac{a}{2}}^{\alpha\delta}\bigg[\int_{-\alpha^{1-p}x_3^p}^{\alpha^{1-p}x_3^p}\!\int_{-\alpha^{1-q}x_3^q}^{\alpha^{1-q}x_3^q}|\nabla u|^2\dd x_2\dd x_1
\\
&-\sqrt{1+q^2\alpha^{2-2q}x_3^{2q-2}}\!\!\int_{-\alpha^{1-p}x_3^p}^{\alpha^{1-p}x_3^p}|u(x_1,\alpha^{1-q}x_3^q,x_3)|^2-|u(x_1,-\alpha^{1-q}x_3^q,x_3)|^2\dd x_1
\\
&-\!\!\sqrt{1+p^2\alpha^{2-2p}x_3^{2p-2}}\!\!\int_{-\alpha^{1-q}x_3^q}^{\alpha^{1-q}x_3^q}|u(\alpha^{1-p}x_3^p,x_2,x_3)|^2\!\!-|u(-\alpha^{1-p}x_3^p,x_2,x_3)|^2\dd x_2\bigg]\dd x_3
\\
=&\int_{\frac{a}{2}}^{\alpha\delta}\bigg[\int_{-\alpha^{1-p}x_3^p}^{\alpha^{1-p}x_3^p}\bigg\{\int_{-\alpha^{1-q}x_3^q}^{\alpha^{1-q}x_3^q}|\partial_{x_2} u|^2\dd x_2
\\
&-\sqrt{1+q^2\alpha^{2-2q}x_3^{2q-2}}\left(|u(x_1,\alpha^{1-q}x_3^q,x_3)|^2+|u(x_1,-\alpha^{1-q}x_3^q,x_3)|^2\right)\bigg\}\dd x_1
\\
&+\int_{-\alpha^{1-q}x_3^q}^{\alpha^{1-q}x_3^q}\bigg\{\int_{-\alpha^{1-p}x_3^p}^{\alpha^{1-p}x_3^p}|\partial_{x_1} u|^2\dd x_1
\\
&-\sqrt{1+p^2\alpha^{2-2p}x_3^{2p-2}}\left(|u(\alpha^{1-p}x_3^p,x_2,x_3)|^2+|u(-\alpha^{1-p}x_3^p,x_2,x_3)|^2\right)\bigg\}\dd x_2
\\
&+\int_{-\alpha^{1-q}x_3^q}^{\alpha^{1-q}x_3^q}\int_{-\alpha^{1-p}x_3^p}^{\alpha^{1-p}x_3^p}|\partial_{x_3} u|^2\dd x_1 \dd x_2\bigg] \dd x_3
\\
\ge&\int_{\frac{a}{2}}^{\alpha\delta}\bigg[\int_{-\alpha^{1-p}x_3^p}^{\alpha^{1-p}x_3^p}\bigg\{\int_{-\alpha^{1-q}x_3^q}^{\alpha^{1-q}x_3^q}|\partial_{x_2} u|^2\dd x_2
\\
&-\sqrt{1+q^2\alpha^{2-2q}x_3^{2q-2}}\left(|u(x_1,\alpha^{1-q}x_3^q,x_3)|^2+|u(x_1,-\alpha^{1-q}x_3^q,x_3)|^2\right)\bigg\}\dd x_1
\\
&+\int_{-\alpha^{1-q}x_3^q}^{\alpha^{1-q}x_3^q}\bigg\{\int_{-\alpha^{1-p}x_3^p}^{\alpha^{1-p}x_3^p}|\partial_{x_1} u|^2\dd x_1
\\
&-\sqrt{1+p^2\alpha^{2-2p}x_3^{2p-2}}\left(|u(\alpha^{1-p}x_3^p,x_2,x_3)|^2+|u(-\alpha^{1-p}x_3^p,x_2,x_3)|^2\right)\bigg\}\dd x_2\bigg] \dd x_3
\\
\ge& \int_{\frac{a}{2}}^{\alpha\delta}\bigg[E_1\left(B_{\alpha^{1-p}x_3^p,\sqrt{1+p^2\alpha^{2-2p}x_3^{2p-2}}}\otimes 1+1\otimes B_{\alpha^{1-q}x_3^q,\sqrt{1+q^2\alpha^{2-2q}x_3^{2q-2}}}\right)
\\
&\cdot\int_{-\alpha^{1-q}x_3^q}^{\alpha^{1-q}x_3^q}\int_{-\alpha^{1-p}x_3^p}^{\alpha^{1-p}x_3^p} u^2 \dd x_1\dd x_2\bigg]\dd x_3
\\
\ge& \Lambda \|u\|^2_{L^2(\widetilde W_\alpha)}
\\
&\text{with }\Lambda:=\!\!\inf_{x_3\in(\frac{a}{2},\alpha\delta)}E_1\left(B_{\alpha^{1-p}x_3^p,\sqrt{1+p^2\alpha^{2-2p}x_3^{2p-2}}}\otimes 1+1\otimes B_{\alpha^{1-q}x_3^q,\sqrt{1+q^2\alpha^{2-2q}x_3^{2q-2}}}\right).
\end{align*}
By Lemma \ref{lem-1}, it follows
\begin{align*}
E_1&\left(B_{\alpha^{1-p}x_3^p,\sqrt{1+p^2\alpha^{2-2p}x_3^{2p-2}}}\otimes 1+1\otimes B_{\alpha^{1-q}x_3^q,\sqrt{1+q^2\alpha^{2-2q}x_3^{2q-2}}}\right)
\\
\ge&-\frac{\alpha^{p-1}\sqrt{1+p^2\alpha^{2-2p}x_3^{2p-2}}}{x_3^p}-C\left(1+p^2\alpha^{2-2p}x_3^{2p-2}\right)
\\
&-\frac{\alpha^{q-1}\sqrt{1+q^2\alpha^{2-2q}x_3^{2q-2}}}{x_3^q}-C\left(1+q^2\alpha^{2-2q}x_3^{2q-2}\right)
\\
\ge& -\frac{\alpha^{p-1}2^p \sqrt{1+p^2\delta^{2p-2}}}{a^p}-C\left(1+p^2\delta^{2p-2}\right)
\\
&-\frac{\alpha^{q-1}2^q\sqrt{1+q^2\delta^{2q-2}}}{a^q}-C\left(1+q^2\delta^{2q-2}\right)
\\
\ge& -C_0\alpha^{q-1}
\end{align*}
for appropriate $C_0,C>0$ and sufficiently large $\alpha$. By Proposition \ref{prop7} and Proposition \ref{prop8} we know $E_j(T_\alpha)\approx -\alpha^{2\frac{q-1}{2-q}}$, which shows $E_j(T_\alpha\oplus W_\alpha)=E_j(T_\alpha)$ for large enough $\alpha$. Combining this and \eqref{ims2} concludes the proof.
\end{proof} 
\begin{prop}\label{prop9}
Let $j\in\N$ be fixed, then the $j$th eigenvalue of $Q_{\alpha\Omega}^1$ satisfies
\begin{equation*}
E_j(Q_{\alpha\Omega}^1)=E_j(T_\alpha)+O(1)\quad \text{as }\alpha\rightarrow\infty.
\end{equation*}
\end{prop}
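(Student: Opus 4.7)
The plan is to assemble the result directly from the two-sided sandwich \eqref{sandwich} together with Lemmas \ref{lem10} and \ref{lem11}; no new analysis should be needed, only the dilation bookkeeping that reduces everything to $Q_{\alpha\Omega}^{1}$ rather than $Q_\Omega^\alpha$.

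First I would unfold the dilation identity $Q_\Omega^\alpha \simeq \alpha^2 Q_{\alpha\Omega}^1$ (and analogously $R_\alpha^{N/D,\Omega_\delta}\simeq \alpha^2 R_1^{N/D,\alpha\Omega_\delta}$) recorded in Subsections \ref{isolateing peak} and \ref{sec-iso peak}. Dividing the improved sandwich \eqref{sandwich} by $\alpha^2$ yields, for all sufficiently large $\alpha$,
\begin{equation*}
E_j\bigl(R_1^{N,\alpha\Omega_\delta}\bigr)\;\le\; E_j\bigl(Q_{\alpha\Omega}^{1}\bigr)\;\le\; E_j\bigl(R_1^{D,\alpha\Omega_\delta}\bigr).
\end{equation*}
This frames the problem entirely in terms of the "stretched peak" operators that were compared to $T_\alpha$ in the two preceding lemmas.

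Next I would apply Lemma \ref{lem10} to the upper end, which gives $E_j(R_1^{D,\alpha\Omega_\delta})\le E_j(T_\alpha)$, so that $E_j(Q_{\alpha\Omega}^{1})\le E_j(T_\alpha)$. For the lower end I would invoke Lemma \ref{lem11}: there exist $k>0$ and $\alpha_0>0$ such that for all $\alpha>\alpha_0$,
\begin{equation*}
E_j\bigl(R_1^{N,\alpha\Omega_\delta}\bigr)\;\ge\; E_j(T_\alpha)-k.
\end{equation*}
Chaining these two inequalities gives $E_j(T_\alpha)-k\le E_j(Q_{\alpha\Omega}^{1})\le E_j(T_\alpha)$ for all large $\alpha$, which is exactly the statement $E_j(Q_{\alpha\Omega}^{1})=E_j(T_\alpha)+O(1)$.

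There is no serious obstacle here; the proposition is a clean packaging step, and all the technical work (the IMS-partition argument on the $\alpha\Omega_\delta$ side and the extension-by-zero comparison) has already been absorbed into Lemmas \ref{lem10} and \ref{lem11}. The only thing to check while writing is that the threshold $\alpha_0$ used to pass from \eqref{sandwichRobin} to \eqref{sandwich} via Lemma \ref{10.5}, together with the thresholds in Lemmas \ref{lem10} and \ref{lem11}, can be taken jointly, which is automatic since finitely many lower bounds on $\alpha$ are combined.
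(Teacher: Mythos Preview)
Your proposal is correct and follows exactly the paper's approach: the paper's own proof is a one-line reference to \eqref{sandwich}, Lemma~\ref{lem10}, and Lemma~\ref{lem11}, and you have simply spelled out the dilation bookkeeping and the chaining of inequalities in detail.
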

\begin{proof}
Let $j\in \N$ be fixed. By combining \eqref{sandwich} and Lemma \ref{lem10} and Lemma \ref{lem11} we obtain $E_j(Q_{\alpha\Omega}^1)=E_j(T_\alpha)+O(1)$ as $\alpha\rightarrow\infty$.
\end{proof}
The results from above allow us to prove Theorem \ref{thm1}.
\begin{proof}[Proof of Theorem \ref{thm1}]
Proposition \ref{prop9} together with the unitary equivalence $Q^\alpha_\Omega\simeq\alpha^2 Q^1_{\alpha\Omega}$ and the asymptotics of $E_j(T_\alpha)$ given by Proposition \ref{prop7} and Proposition \ref{prop8} prove Theorem \ref{thm1}.
\end{proof}
\appendix
\section*{Appendix}

\section{Density in Sobolev spaces on domains with non-isotropic peaks}\label{appa}

To avoid technical difficulties we don't work with $\widehat{H}^1_0(\Omega_\delta)$. Since we use the min-max principle it is sufficient to work with dense subspaces of $\widehat{H}^1_0(\Omega_\delta)$. For that purpose let $\Omega_\delta$ as in \eqref{omegadelta} and define
\begin{align*}
	\label{cinfi}
	C^\infty_{(0,\delta)}(\Bar \Omega_\delta):=\big\{ u\in C^\infty(\Bar \Omega_\delta): \exists\, [b,c]&\subset (0, \delta)\;\text{such that}
\\	
	&\text{$u(x_1,x_2,x_3)=0$ for $x_3\notin [b,c]$} \big\}.
\end{align*}
Then we have the following density result.
\begin{lemma}\label{prop4dens}
	The subspace $ C^\infty_{(0,\delta)}(\Bar \Omega_\delta)$ is dense in $\widehat{H}^1_0(\Omega_\delta)$ with respect to the $H^1$ norm.
\end{lemma}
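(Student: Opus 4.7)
The plan is to approximate $u \in \widehat H^1_0(\Omega_\delta)$ through four successive reductions: (i) truncate so $u$ is bounded, (ii) multiply by a cutoff in $x_3$ vanishing near the lid $\{x_3 = \delta\}$, (iii) multiply by a radial cutoff supported near the cusp tip at $0$, and (iv) invoke standard mollification on the remaining Lipschitz subdomain. For step (i), I would use Stampacchia's truncation $u_M := \max(-M, \min(M, u))$: it preserves the trace condition at $\{x_3 = \delta\}$ and satisfies $u_M \to u$ in $H^1$ by dominated convergence applied to $\nabla u_M = \mathds{1}_{|u| < M} \nabla u$, so without loss of generality $u \in L^\infty(\Omega_\delta)$.

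For step (ii) I would choose $\chi_\epsilon \in C^\infty(\R)$ equal to $1$ on $(-\infty, \delta - 2\epsilon)$ and vanishing on $[\delta - \epsilon, \infty)$ with $|\chi_\epsilon'| \le C/\epsilon$, and check $\chi_\epsilon u \to u$ in $H^1(\Omega_\delta)$. The only nontrivial term is $\|u\, \chi_\epsilon'\|_{L^2}^2$; since $u(\cdot, \delta) = 0$ in the trace sense and the cross-sections $\omega_s := (-s^p, s^p) \times (-s^q, s^q)$ are monotone in $s$, for a.e.\ point in the slab $\{\delta - 2\epsilon < x_3 < \delta - \epsilon\}$ I can write $u(x_1, x_2, s) = -\int_s^\delta \partial_3 u(x_1, x_2, r)\dd r$ and use Cauchy--Schwarz together with Fubini to bound this term by $2C^2 \|\partial_3 u\|_{L^2(\Omega_\delta \cap \{x_3 > \delta - 2\epsilon\})}^2$, which vanishes as $\epsilon \to 0$.

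Step (iii) is the technical heart. I would take a radial cutoff $\phi_n(x) := \chi(|x|/r_n) \in C^\infty_c(\R^3)$ with $\chi \equiv 1$ on $[0, 1/2]$ and support in $[0, 1]$, set $w_n := (1 - \phi_n) u$, and check $w_n \to u$ in $H^1(\Omega_\delta)$ as $r_n \to 0$. The terms $\|\phi_n u\|_{L^2}$ and $\|\phi_n \nabla u\|_{L^2}$ vanish by dominated convergence on $\Omega_\delta \cap B(0, r_n)$, and using the boundedness of $u$ from step (i) the remaining term satisfies $\|u \nabla \phi_n\|_{L^2}^2 \le \|u\|_\infty^2 (C/r_n)^2 |\Omega_\delta \cap B(0, r_n)|$. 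The crucial input is the cusp volume estimate
\begin{equation*}
|\Omega_\delta \cap B(0, r_n)| \le |\Omega_\delta \cap \{x_3 < r_n\}| = \int_0^{r_n} 4 s^{p+q} \dd s \le C r_n^{p+q+1},
\end{equation*}
which yields $\|u \nabla \phi_n\|_{L^2}^2 \le C'\|u\|_\infty^2 r_n^{p+q-1} \to 0$ since $p + q > 2$. This is essentially the statement that $\{0\}$ has zero $H^1$-capacity relative to $\Omega_\delta$, and the non-isotropic geometry enters only through the combined exponent $p + q$.

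After (i)--(iii), the approximant lies in $\widehat H^1_0(\Omega_\delta) \cap L^\infty(\Omega_\delta)$ with $x_3$-support in some $[b, c] \subset (0, \delta)$. On the slab $\Omega_\delta \cap \{b/2 < x_3 < (c + \delta)/2\}$ the boundary is smooth (in particular Lipschitz), so the classical density of smooth functions up to the boundary in $H^1$ of Lipschitz domains applies; multiplying such a smooth approximation by a further $x_3$-cutoff equal to $1$ on $[b, c]$ and compactly supported in $(0, \delta)$ produces an element of $C^\infty_{(0, \delta)}(\bar \Omega_\delta)$ approximating the original $u$ in $H^1(\Omega_\delta)$. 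The main obstacle is step (iii): the gradient of $\phi_n$ scales like $r_n^{-1}$, and the approximation succeeds precisely because the cusp volume scales like $r_n^{p+q+1}$, i.e.\ because $p + q > 2$ (automatic from $p, q > 1$).
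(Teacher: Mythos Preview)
Your proof is correct and reaches the same two core estimates as the paper, but organises the argument differently. The paper first invokes density of $C^\infty(\bar\Omega_\delta)$ in $H^1(\Omega_\delta)$ for $C^0$ domains (citing Maz'ya--Poborchi) to reduce to $u\in C^\infty(\bar\Omega_\delta)\cap\widehat H^1_0(\Omega_\delta)$, and then applies a single product cutoff $u_\eps(x)=u(x)\,\chi(x_3/\eps)\,\chi((\delta-x_3)/\eps)$ in the $x_3$-variable that simultaneously handles the tip and the lid. The two key inputs---the volume bound $|\Omega_\delta\cap\{x_3<\eps\}|\le C\eps^{p+q+1}$ near the tip (your step~(iii), the paper's term $I_2$) and the Hardy-type estimate coming from the vanishing trace at $x_3=\delta$ (your step~(ii), the paper's $I_3$)---are identical in both arguments. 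Your route trades the external $C^0$-density theorem for Stampacchia truncation plus the standard Lipschitz density on the truncated slab, which makes the proof more self-contained at the cost of a few more steps; the paper's version is shorter but leans on the stronger external result. One minor remark: in your step~(ii) the Newton--Leibniz identity $u(x_1,x_2,s)=-\int_s^\delta\partial_3 u\,\dd r$ is applied to a merely $H^1$ function with zero trace, which is valid for a.e.\ vertical fibre by a standard slicing argument but is slightly more delicate than in the paper, where $u$ is already smooth at that stage.
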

\begin{proof}
Since $\Omega_\delta$ has a $C^0$ boundary, the space $C^\infty(\Bar \Omega_\delta)$ is dense in $H^1(\Omega_\delta)$ according to \cite[Theorem 1 in Sec. 1.4.2]{bad domain}. Therefore $C^\infty(\Bar \Omega_\delta)\cap\widehat{H}^1_0(\Omega_\delta)$ is dense in $\widehat{H}^1_0(\Omega_\delta)$ and it is left to show that every function in $C^\infty(\Bar \Omega_\delta)\cap\widehat{H}^1_0(\Omega_\delta)$ can be approximated by functions in $C^\infty_{(0,\delta)}(\Bar \Omega_\delta)$.

Let $u\in C^\infty(\Bar \Omega_\delta)\cap\widehat{H}^1_0(\Omega_\delta)$ and $\chi:\R\rightarrow\R$ be a smooth function with $\chi(s)=0$ for $s<\frac{1}{2}$ and $\chi(s)=1$ for $s>1$. For $\eps>0$ let $u_{\eps}$ be the function on $\Omega_\delta$ given by
\begin{equation*}
u_\eps(x)=u(x)\chi\left(\frac{x_3}{\eps}\right)\chi\left(\frac{\delta-x_3}{\eps}\right).
\end{equation*}
Then there holds
\begin{align*}
\|u-u_\eps\|_{H^1(\Omega_\delta)}^2\le &C \int_{\Omega_\delta}\left(|u|^2+|\nabla u|^2\right)\left(1-\chi\left(\frac{x_3}{\eps}\right)\chi\left(\frac{\delta-x_3}{\eps}\right)\right)^2\dd x
\\
&+\frac{C}{\eps^2}\int_{\Omega_\delta\cap\{x_3<\eps\}}|u|^2\dd x+\frac{C}{\eps^2}\int_{\Omega_\delta\cap\{x_3>\delta-\eps\}}|u|^2\dd x=I_1+I_2+I_3.
\end{align*}
Since $u\in H^1(\Omega_\delta)$ the first integral $I_1$ converges to $0$ as $\eps\rightarrow 0$, by the dominated convergence theorem. For $I_2$ we estimate
\begin{align*}
\int_{\Omega_\delta\cap\{x_3<\eps\}}|u|^2\dd x&\le \|u\|^2_{\infty}\int_{\Omega_\delta\cap\{x_3<\eps\}}\dd x=\|u\|^2_\infty\int_0^\eps \int_{(-x_3^p,x_3^p)\times(-x_3^q,x_3^q)}\dd x
\\
&=4\|u\|^2_\infty\int_0^\eps x_3^{p+q}\dd x_3 =\frac{4\|u\|^2_\infty}{p+q+1}\eps^{p+q+1}
\end{align*}
and this implies $I_2\rightarrow 0$ as $\eps\rightarrow 0$. Lastly for $I_3$ we remark that
\begin{equation*}
u(x_1,x_2,x_3)=-\int^\delta_{x_3}\partial_{x_3}u(x_1,x_2,t)\dd t
\end{equation*}
and then using H\"older's inequality we arrive at
\begin{align*}
|u(x_1,x_2,x_3)|^2=\left|\int_{x_3}^\delta\partial_{x_3}u(x_1,x_2,t)\dd t\right|^2&\le (\delta-x_3)\int_{x_3}^\delta \left|\partial_{x_3}u(x_1,x_2,t)\right|^2\dd t
\\
&\le (\delta-x_3)\int_{x_3}^\delta|\nabla u(x_1,x_2,t)|^2\dd t.
\end{align*}
Now we use this inequality for $I_3$ in the following way
\begin{align*}
I_3&\le \frac{C}{\eps^2}\int_{\Omega_\delta\cap\{x_3>\delta-\eps\}}(\delta-x_3)\int_{x_3}^\delta |\nabla u(x_1,x_2,t)|^2\dd t\dd x
\\
&\le\frac{C}{\eps^2} \int_{\delta-\eps}^\delta(\delta-x_3)\int_{x_3}^\delta\int_{(-x_3^p,x_3^p)\times(-x_3^q,x_3^q)} |\nabla u(x_1,x_2,t)|^2\dd x_1\dd x_2 \dd t\dd x_3
\\
&\le\frac{C}{\eps^2}\int_{\delta-\eps}^\delta(\delta-x_3)\dd x_3\int_{\delta-\eps}^\delta\int_{(-t^p,t^p)\times(-t^q,t^q)} |\nabla u(x_1,x_2,t)|^2\dd x_1\dd x_2 \dd t
\\
&=\frac{C}{2}\int_{\Omega_\delta\cap\{x_3>\delta-\eps\}}|\nabla u|^2\dd x,
\end{align*}
which shows $I_3\rightarrow 0$ as $\eps\rightarrow0$ and that concludes the proof.
\end{proof}

\section{Closedness of $q_\Omega^\alpha$}\label{appb}
For a Lipschitz domain it is well known that the quadratic form associated with the Robin-Laplacian is closed. To show this, one can use the trace inequality from \cite[Theorem 1.5.1.10]{grisvard}. However we can not use this result immediately, since $\Omega$ has a peak. Fortunately this inequality extends to domains with non-isotropic peaks. But let us first show the semiboundedness of $q_\Omega^\alpha$.
\begin{lemma}\label{semibdd}
The quadratic form $q_\Omega^\alpha$ is semibounded from below, for any $\alpha>0$ and any open set $\Omega$ which satisfies \eqref{non iso peak} and \eqref{lipschitz part}.
\end{lemma}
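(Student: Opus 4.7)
The plan is to localise the form via a partition of unity along the $x_3$-direction, treat the bounded Lipschitz part by Grisvard's classical trace inequality, and handle the peak part by the same change of variables already developed in Section~\ref{sec-teps}. Choose $\chi_1,\chi_2\in C^\infty(\R)$ with $\chi_1^2+\chi_2^2=1$, $\chi_1(s)=1$ for $s<\delta/4$, $\chi_1(s)=0$ for $s>\delta/2$, and set $\Tilde\chi_j(x):=\chi_j(x_3)$. A routine IMS computation yields
\[
q_\Omega^\alpha(u,u)\ge q_\Omega^\alpha(\Tilde\chi_1 u,\Tilde\chi_1 u)+q_\Omega^\alpha(\Tilde\chi_2 u,\Tilde\chi_2 u)-K\|u\|^2_{L^2(\Omega)},\qquad K:=\|\chi_1'\|_\infty^2+\|\chi_2'\|_\infty^2.
\]
The function $\Tilde\chi_2 u$ is supported in the bounded Lipschitz set $\Omega\setminus\overline{\Omega_{\delta/4}}$, so by \cite[Theorem 1.5.1.10]{grisvard} one has $\alpha\int_{\partial\Omega}(\Tilde\chi_2 u)^2\dd\sigma\le \tfrac{1}{2}\int_{\Omega}|\nabla(\Tilde\chi_2 u)|^2\dd x+C(\alpha)\|\Tilde\chi_2 u\|^2$, which immediately gives $q_\Omega^\alpha(\Tilde\chi_2 u,\Tilde\chi_2 u)\ge-C_1(\alpha)\|u\|^2$.

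All the work goes into the peak contribution $q_\Omega^\alpha(\Tilde\chi_1 u,\Tilde\chi_1 u)$, whose support lies in $\Omega_{\delta/2}\subset\Omega_\delta$. I would apply the diffeomorphism $X(t_1,t_2,s)=(s^pt_1,s^qt_2,s)$ together with the unitary maps $\U$ and $\V$ of Section~\ref{sec-subs0} (with the parameter $\alpha$ there replaced by $1$, so the cross section is the fixed square $(-1,1)^2$). A computation fully parallel to Lemma~\ref{lem5} and Proposition~\ref{prop6} then produces a lower bound of the form
\[
\int_0^{\delta/2}\Big[\tfrac{1}{s^{2p}}\,b_{1,\alpha s^p}(v(\cdot,t_2,s))+\tfrac{1}{s^{2q}}\,b_{1,\alpha s^p}(v(t_1,\cdot,s))+\tfrac{(p+q)^2-2(p+q)}{4s^2}\|v\|_{L^2((-1,1)^2)}^2+\|\partial_s v\|_{L^2((-1,1)^2)}^2\Big]\dd s
\]
up to a bounded remainder.

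To conclude I would bound each transversal form from below by its first eigenvalue. Lemma~\ref{lem-1}(b) gives $E_1(B_{1,\alpha s^p})\ge-\alpha s^p-C\alpha^2 s^{2p}$ and an analogous estimate for $B_{1,\alpha s^q}$. After inserting these, the effective one-dimensional quadratic form in the variable $s$ becomes
\[
\int_0^{\delta/2}\Big[|w'|^2+\Big(\tfrac{(p+q)^2-2(p+q)}{4s^2}-\tfrac{c_1\alpha}{s^{p}}-\tfrac{c_2\alpha}{s^{q}}-c_3\alpha^2\Big)|w|^2\Big]\dd s,
\]
which has exactly the structure of the Friedrichs extension $A_{c_1\alpha,c_2\alpha}$ of Section~\ref{compop}, shifted by a finite constant. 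Its semiboundedness follows from Hardy's inequality combined with the identity $(p+q)^2-2(p+q)=((p+q)-1)^2-1$ and the fact that $p,q<2$, so the singular boundary contributions $s^{-p},s^{-q}$ at the peak tip are strictly dominated by the positive Hardy term $s^{-2}$.

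The main obstacle is the peak analysis: one must carry out the change of variables with enough care to actually produce the Hardy-type $1/(4s^2)$ coefficient (which arises from the weight $s^{p+q}$ absorbed by $\V$) and to control the cross-terms introduced by the differentiation of the smooth family of $t$-eigenfunctions, exactly as in the proof of Lemma~\ref{lem9}. Once this reduction to $A_{c_1\alpha,c_2\alpha}$ is secured, semiboundedness is automatic since $1<p<q<2$.
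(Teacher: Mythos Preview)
Your strategy is sound and coincides with the paper's: both isolate the peak via an IMS cut-off, dispose of the Lipschitz part by the standard trace inequality, and reduce the peak part to a one-dimensional Schr\"odinger problem whose semiboundedness comes from the $s^{-2}$ term dominating $s^{-p}$ and $s^{-q}$. The paper simply recycles the estimates already obtained in Sections~\ref{sec-teps}--\ref{ssthm1} on the \emph{scaled} domain $\alpha\Omega$ (so that $\Lambda_1(q_\Omega^\alpha)\ge \alpha^2\Lambda_1(t_\alpha)\ge -K\alpha^{2/(2-q)}$ for large $\alpha$, then uses monotonicity in $\alpha$), whereas you redo the computation on the unscaled peak with cross-section $(-1,1)^2$. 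The two routes are equivalent in spirit; yours is more self-contained, the paper's is shorter because the heavy work is already done.

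Two remarks on your write-up. First, the sentence ``a computation fully parallel to Lemma~\ref{lem5}'' is slightly optimistic: the lower bound in Lemma~\ref{lem5} carries prefactors such as $1-\alpha^{1-p}-\alpha^{1-q}$ and $1-p^2\alpha^{1-p}$, which are obtained from Young's inequality using $|t_j|\le\alpha^{1-p},\alpha^{1-q}$ and are positive only for large $\alpha$. If you literally set $\alpha=1$ there, all three coefficients become negative and the bound is useless. The fix is easy---either scale first (as the paper does), or split the Young inequalities differently, e.g.\ $|2ps^{-1}t_1\,\partial_{t_1}u\,\partial_s u|\le \eps s^{-2p}|\partial_{t_1}u|^2+\eps^{-1}p^2s^{2p-2}|\partial_s u|^2$, and localise to $\{x_3<\eta\}$ with $\eta$ small enough that $p^2\eta^{2p-2}/\eps<1$; the intermediate piece $\{\eta<x_3<\delta\}$ is Lipschitz and goes with $\chi_2$.

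Second, invoking the eigenfunction decomposition of Lemma~\ref{lem9} is unnecessary here. For a mere lower bound you may simply drop $|\partial_s u|^2\ge0$ after applying $\V$: since $p+q>2$ one has $(p+q)^2-2(p+q)>0$, so the resulting multiplication operator has symbol
\[
\frac{(p+q)^2-2(p+q)}{4s^2}-\frac{c_1\alpha}{s^p}-\frac{c_2\alpha}{s^q}-c_3\alpha^2,
\]
which is bounded below on $(0,\eta)$ because $p,q<2$. No Hardy inequality and no tracking of cross-terms from $\partial_s\Phi$ are needed.
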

\begin{proof}
Whether or not a quadratic form $t$ is semibounded from below, one can still calculate the min-max value 
\[
\Lambda_1(t):=\inf_{\substack{u\in D(t)\\ u\neq0}}\frac{t(u,u)}{\| u\|_{\cH}^2}.
\]
Remark that a quadratic form is semibounded from below if $\Lambda_1(t)>-\infty$. Therefore we need to show that $\Lambda_1(q_\Omega^\alpha)>-\infty$. We can argue as in Subsection \ref{lowbdd} to get that there exists $\alpha_0>0$ such that for all $\alpha\ge\alpha_0$ there holds
\[
\Lambda_1(t_\alpha)\ge \alpha^{2\frac{q-1}{2-q}}E_1(A_{0,1})+O\left(\alpha^{2\frac{q-1}{2-q}-(p-1)}+\alpha^{2\frac{q-1}{2-q}-\frac{q-p}{2-q}}\right).
	\]
Furthermore, we can use the same arguments as in Subsection \ref{isolateing peak} and Subsection \ref{ssthm1} to show that for sufficiently large $\alpha$ one has  $\Lambda_1(q_\Omega^\alpha)\ge\Lambda_1(r_\alpha^{N,\Omega_\delta})\ge\alpha^2\Lambda_1(t_\alpha)$.
Hence, combining these inequalities, we can find $K>0$ such that $\Lambda_1(q_\Omega^\alpha)\ge -K\alpha^{\frac{2}{2-q}}$ for all $\alpha\ge\alpha_0$, which proves the semiboundedness for all $\alpha\ge\alpha_0$. Now let $\widetilde\alpha<\alpha_0$, then one easily sees
\[
q_\Omega^{\tilde\alpha}(u,u)\geq q_\Omega^{\alpha_0}(u,u) \quad \text{for all $u\in H^1(\Omega)$}.
\]
But this immediately implies $\Lambda_1(q_\Omega^{\tilde{\alpha}})\ge\Lambda_1(q_\Omega^{\alpha_0})\ge -K\alpha_0^{\frac{2}{2-q}}$. All in all we showed that $q_\Omega^\alpha$ is semibounded from below for all $\alpha>0$.
\end{proof}
To conclude the closedness of $q_\Omega^\alpha$ we want to extent the trace inequality from \cite[Theorem 1.5.1.10]{grisvard} to domains with non-isotropic peaks.
\begin{prop}
For any $\delta>0$ there exists $C_\delta>0$ such that
\[
\|u\|_{L^2(\partial\Omega)}^2\le\delta\|\nabla u\|_{L^2(\Omega)}^2+C_\delta\|u\|_{L^2(\Omega)}^2\quad \text{for any $u\in H^1(\Omega)$.}
\]
In particular the norm induced by $q_\Omega^\alpha$ is equivalent to the standard $H^1$-norm and therefore $q_\Omega^\alpha$ is closed.
\end{prop}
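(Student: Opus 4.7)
The plan is to deduce the trace inequality directly from the semiboundedness already established in Lemma~\ref{semibdd}, and then to recover norm equivalence and closedness by routine manipulations. The heavy lifting (isolating and analyzing the peak) has been done beforehand, so no further geometric work is required here.

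First I would observe that for every $\alpha>0$, Lemma~\ref{semibdd} produces a constant $K(\alpha)\ge 0$ such that
\[
\int_\Omega|\nabla u|^2\dd x-\alpha\int_{\partial\Omega}u^2\dd\sigma\;\ge\;-K(\alpha)\int_\Omega u^2\dd x
\qquad\text{for all }u\in H^1(\Omega),
\]
which rearranges into
\[
\int_{\partial\Omega}u^2\dd\sigma\;\le\;\frac{1}{\alpha}\int_\Omega|\nabla u|^2\dd x+\frac{K(\alpha)}{\alpha}\int_\Omega u^2\dd x.
\]
Given $\delta>0$ the first claim then follows by setting $\alpha:=1/\delta$ and $C_\delta:=\delta\,K(1/\delta)$. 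In particular, there is no need to re-prove any geometric trace estimate on the non-Lipschitz piece of $\partial\Omega$: the spectral analysis of Section~\ref{sec-teps} has already encoded the required control through the lower bound on $\Lambda_1(q_\Omega^\alpha)$.

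For the second claim I would fix $\alpha>0$ and choose $M>0$ so that the shifted form $u\mapsto q_\Omega^\alpha(u,u)+M\|u\|^2_{L^2(\Omega)}$ is strictly positive. Applying the trace inequality just proved with $\delta:=1/(2\alpha)$ gives
\[
q_\Omega^\alpha(u,u)\;\ge\;\tfrac{1}{2}\|\nabla u\|^2_{L^2(\Omega)}-\alpha C_{1/(2\alpha)}\|u\|^2_{L^2(\Omega)},
\]
so for $M$ large enough one obtains $q_\Omega^\alpha(u,u)+M\|u\|^2_{L^2(\Omega)}\ge \tfrac12\|u\|^2_{H^1(\Omega)}$. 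The converse bound $q_\Omega^\alpha(u,u)+M\|u\|^2_{L^2(\Omega)}\le \|\nabla u\|^2_{L^2(\Omega)}+M\|u\|^2_{L^2(\Omega)}\le\max(1,M)\|u\|^2_{H^1(\Omega)}$ is immediate since the boundary term of $q_\Omega^\alpha$ enters with a minus sign. Hence the form norm associated with $q_\Omega^\alpha$ is equivalent to the standard $H^1$-norm, and closedness of $q_\Omega^\alpha$ follows from completeness of $H^1(\Omega)$.

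In this argument there is no real obstacle left to overcome: everything hard is concentrated in Lemma~\ref{semibdd}, whose proof in turn relies on the peak analysis of Subsections~\ref{isolateing peak}--\ref{lowbdd}. The only point that requires a small amount of care is ensuring that $K(\alpha)$ can indeed be chosen for every $\alpha>0$ (and not only for $\alpha\ge\alpha_0$); this is exactly what the monotonicity argument at the end of the proof of Lemma~\ref{semibdd} provides, and it is what allows us to take $\alpha=1/\delta$ for arbitrarily small $\delta>0$.
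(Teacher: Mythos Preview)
Your proof is correct and follows essentially the same approach as the paper: rearranging the semiboundedness bound from Lemma~\ref{semibdd} with $\alpha=1/\delta$ to obtain the trace inequality, and then deducing norm equivalence and closedness. You supply more detail on the norm-equivalence step than the paper does (which simply asserts it), and your argument there is fine. One small slip in your closing commentary: the monotonicity at the end of Lemma~\ref{semibdd} extends the bound to \emph{small} $\alpha$, i.e.\ to arbitrarily \emph{large} $\delta$, not to arbitrarily small $\delta$; the small-$\delta$ (large-$\alpha$) case is already covered directly by the spectral analysis.
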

\begin{proof}
Due to Lemma \ref{semibdd}, for every $\alpha>0$ we can find a constant $K_\alpha>0$ such that
\[
\|\nabla u\|_{L^2(\Omega)}^2-\alpha\|u\|_{L^2(\partial\Omega)}^2\ge-K_\alpha\|u\|_{L^2(\Omega)}^2\quad \text{for all $u\in H^1(\Omega)$}.
\]
Rearranging this inequality yields
\[
\frac{1}{\alpha}\|\nabla u\|_{L^2(\Omega)}^2+\frac{K_\alpha}{\alpha}\|u\|_{L^2(\Omega)}^2\ge\|u\|_{L^2(\partial\Omega)}^2\quad \text{for all $u\in H^1(\Omega)$}.
\]
Now let $\delta>0$ be arbitrary and choose $\alpha>0$ such that $\delta=\alpha^{-1}$, then the claim follows, with $C_\delta=\frac{K_\alpha}{\alpha}$.
\end{proof}

\noindent {\bf Acknowledgements.} Many thanks to Konstantin Pankrashkin for suggesting the study of Robin-Laplacians on non-isotropic peaks and for helpful discussions on various issues related to this topic.

\end{document}